\newtheorem{theorem}{Theorem}[section]
\newtheorem{proposition}[theorem]{Proposition}
\newtheorem{lemma}[theorem]{Lemma}
\newtheorem{corollary}[theorem]{Corollary}
\theoremstyle{definition}
\newtheorem{definition}{Definition}[section]
\theoremstyle{remark}
\newtheorem{remark}{Remark}[section]
\numberwithin{equation}{section}
\renewcommand\div{\operatorname{div}}
\newcommand{\LRa}{\Leftrightarrow}
\newcommand{\xto}[1]{\xrightarrow{#1}}
\newcommand\Dsur[2]{\frac{d #1}{d #2}}
\newcommand\R{\mathbb{R}}
\newcommand\N{\mathbb{N}}
\newcommand{\Bal}[2]{B_{#1}(#2)}
\newcommand\abs[1]{\left\lvert #1 \right\rvert}
\newcommand{\norm}[1]{\left\| #1 \right\|}
\newcommand{\dprod}[2]{\left\langle #1,#2 \right\rangle}
\newcommand\bpr[1]{\left(#1\right)}
\newcommand\bsq[1]{\left[#1\right]}
\renewcommand{\epsilon}{\varepsilon}
\renewcommand{\emptyset}{{\text{\Large\o}}}
\renewcommand{\phi}{\varphi}
\newcommand\at[1]{\Big\lvert_{#1}}
\newcommand\bigO{\mathcal{O}}
\newcommand{\sub}{\subset}
\newcommand{\D}{\Delta}
\newenvironment{bigcases}{\left\{\begin{aligned}}{\end{aligned}\right.}
\newcommand{\edintertext}[1]{%
  \noalign{%
    \vskip\belowdisplayshortskip
    \vtop{\hsize=\linewidth#1\par
    \expandafter}%
    \expandafter\prevdepth\the\prevdepth
  }%
}
\renewcommand{\a}{\alpha}
\newcommand\Sob[1][k]{\mathcal{H}^{#1}}
\newcommand\Snorm[3][]{\norm{#3}_{\Sob[#2]#1}}
\newcommand{\Lnorm}[3][]{\norm{#3}_{L^{#2}#1}}
\newcommand{\Dg}[1][g]{\D_{#1}}
\newcommand{\intM}[2][M]{\int_{#1} #2\, dv_g}
\newcommand{\inj}{i_g}
\newcommand{\dg}[2][g]{d_{#1}{(#2)}}
\newcommand{\Cct}{C_c^\infty}
\newcommand{\G}[1]{H^{(#1)}}
\newcommand{\Ga}[2][]{H^{\ifthenelse{\isempty{#2}}{}{(#2)}}_{\a\ifthenelse{\isempty{#1}}{}{; #1}}}
\newcommand{\Go}[2][]{H^{\ifthenelse{\isempty{#2}}{}{(#2)}}_{1\ifthenelse{\isempty{#1}}{}{; #1}}}
\newcommand{\Gam}[2][]{\Gamma^{#2}_{\alpha\ifthenelse{\isempty{#1}}{}{,#1}}}
\newcommand{\sqa}{\sqrt{\a}}
\newcommand{\clasG}{\mathfrak{H}}
\newcommand{\Gg}[1][]{\Tilde{G}_{\a\ifthenelse{\isempty{#1}}{}{; #1}}}
\newcommand{\Gga}[1][]{G_{g,\a\ifthenelse{\isempty{#1}}{}{; #1}}}
\newcommand{\expab}[1]{e^{-\sqrt{\a} \dg{#1}}}
\newcommand{\Pea}{\Psi_{\epsilon,\a}}
\newcommand{\expsqa}[2][\epsilon]{e^{-(1-#1)\sqrt{\a}\dg{#2}}}
\newcommand{\Diag}{Diag}
\let\oldtbin\tbinom
\renewcommand{\tbinom}[2]{\oldtbin{#2}{#1}}
\let\oldbin\binom
\renewcommand{\binom}[2]{\oldbin{#2}{#1}}
\begin{document}

\title[Green's function of polyharmonic operators]{The Green's function of polyharmonic operators with diverging coefficients: Construction and sharp asymptotics.}

\author{Lorenzo Carletti}
\date{November 2024}
\thanks{This publication is supported by the French Community of Belgium as part of the funding of a FRIA grant.}
\address{Lorenzo Carletti, Université Libre de Bruxelles, Service d'Analyse, Boulevard du Triomphe - Campus de la Plaine, 1050 Bruxelles, Belgium}
\email{\url{lorenzo.carletti@ulb.be}}
\begin{abstract}
    We show existence, uniqueness and positivity for the Green's function of the operator $(\Delta_g + \alpha)^k$ in a closed Riemannian manifold $(M,g)$, of dimension $n>2k$, $k\in \mathbb{N}$, $k\geq 1$, with Laplace-Beltrami operator $\Delta_g = -\operatorname{div}_g(\nabla \cdot)$, and where $\a >0$.
    We are interested in the case where $\a$ is large : We prove pointwise estimates with explicit dependence on $\a$ for the Green's function and its derivatives. We highlight a region of exponential decay for the Green's function away from the diagonal, for large $\a$.
\end{abstract}
\maketitle

\section{Introduction and statement of the result}
\par Let $(M,g)$ be a smooth connected Riemannian manifold of dimension $n$, compact and without boundary. Let $k\geq 1$ be an integer, we assume that $n>2k$. Let $\a > 0$ be a fixed real number, we consider the elliptic partial differential operator of order $2k$, $(\Dg+\a)^k$ in $M$, where we define $\Dg :=  -\div_g(\nabla \cdot)$ the Laplace-Beltrami operator. In this article we construct the Green's function for $(\Dg + \a)^k$ in $M$, we show uniqueness and positivity, as well as sharp pointwise asymptotics. The main goal is to obtain asymptotics that explicitly depend on $\a$ to understand the behavior of the Green's function when $\a$ is large.

\par The Green's function for polyharmonic problems has been extensively studied in the Euclidean setting, and sharp bounds from above and below have been obtained, see for instance to \cite{DacqMeiSw05,GazGruSw10,Gru21}. 
It is also worth mentioning that there exists an extensive literature for the construction of Green's functions for standard operators of second order on common domains of $\R^n$, in particular in $\R^2,\, \R^3$. See for instance \cite{Duf15} for an undergraduate-level textbook on the matter.
\par Polyharmonic operators on manifolds have been studied because of their connections with the so-called prescribed $Q$-curvature equations. These equations involve a special family of conformally invariant operators, called \emph{GJMS operators} \cite{GJMS92}, for which a Green's function was investigated in \cite{Mic10}. Green's functions for GJMS operators have proven fundamental to obtain existence results for the prescribed $Q$-curvature equations, we refer to \cite{Sch84} for the conformal Laplacian, see also \cite{MazVet} and the references therein for higher-order $Q$-curvature equations.  
Moreover, the operator $(\Dg +\a)^k$ can be seen as a toy-model for the GJMS operator of order $2k$, which, on an Einstein manifold, can be written as a product of $k$ operators of the form $\Dg + c_j$ (see \cite{FefGra12}). 

\par For any $p\geq 1$ and $l \in \N$, let us define the norms
\[  \Snorm[(M)]{l,p}{u}^p := \sum_{m=0}^l \Lnorm[(M)]{p}{\Dg^{m/2} u}^p,
    \]
where we write $\abs{\Dg^{m/2} u} := \begin{cases}
    \abs{\Dg^i u} & \text{if $m=2i$ is even,}\\
    \abs{\nabla \Dg^i u}_g & \text{if $m=2i+1$ is odd}
\end{cases}$.
Let us also define the Sobolev space $\Sob[l,p](M)$ as the closure of $C^\infty(M)$ in $L^p(M)$ with respect to the norm $\Snorm{l,p}{\cdot}$. We write $\Sob(M)$ for the Hilbert space $\Sob(M) = \Sob[k,2](M)$, for $k\geq 1$ integer. 

\par In this paper, we study the operator $(\Dg+\a)^k$, with $\a>0$, in $M$. Our main motivation to consider these specific operators comes from their importance in the study of the optimal constant for the critical Sobolev embeddings in compact manifolds. We refer to \cite{HebVau96} for the case $k=1$, and \cite{Heb03} for the biharmonic case $k=2$, where the operator $(\Dg+\a)^k$, for $\a \to \infty$, naturally appears in a contradiction argument. In a companion article \cite{Car24}, we consider the general case $k\geq 1$. In that case, our proof makes an extensive use of the explicit behavior of the Green's function for the operator $(\Dg+\a)^k$, when $\a \to \infty$. We require a precise description of the Green's function depending explicitly on the coefficient $\a$. The present article thus provides technical results that are crucial in \cite{Car24}. The geometrical context explains that no boundary conditions need to be considered here, the presence of a diverging coefficient being the main challenge.
\par Operators of the form $(\Dg+\a)^k$ also naturally appear in other contexts, as they are simple toy-models to understand the effect of a large parameter in the lower-order terms. We refer to \cite{AdPaYa95,Rey02,Wan95} for instances where concentration phenomena for sequences of solutions to critical non-linear equations are investigated in the case $k=1$. See also \cite{FelHebRob05}, where the energy of sequences of solutions is studied, in the case $k=2$.

\smallskip
\par Observe that, for $\a>0$, the operator $(\Dg+\a)^k$ is coercive, since if $\a \geq 1$,
\begin{align*}
    \Snorm[(M)]{k}{u}^2 &= \sum_{l=0}^{k} \intM{\abs{\Dg^{l/2} u}^2}\\
        &\leq \sum_{l=0}^{k}\tbinom{l}{k}\a^{k-l} \intM{\abs{\Dg^{l/2} u}^2} = \dprod{(\Dg+\a)^k u}{u}_{\Sob[-k],\Sob[k]},
    \end{align*}
and if $\a < 1$
\[  \Snorm[(M)]{k}{u}^2 \leq \tfrac{1}{\a^k} \dprod{(\Dg+\a)^k u}{u}_{\Sob[-k],\Sob[k]}.
    \]
If $\phi \in C^\infty(M)$, the existence and uniqueness of a solution $u \in C^\infty(M)$ to the linear equation
\begin{equation}\label{tmpdgu}
    (\Dg+\a)^k u = \phi \quad \text{on $M$}
\end{equation}
follows from the coercivity of the operator, and from standard elliptic theory. See for instance \cite{GilTrud} for standard existence and regularity results in the case $k=1$, which can be iterated in the case of our operator. This allows us to define a Green's function for this operator.
\begin{definition}[Green's function]\label{def:Green}
    Let $\a> 0$, $k\geq 1$ and $n>2k$, and let $(M,g)$ be a connected compact Riemannian manifold of dimension $n$, without boundary, with Laplace-Beltrami operator $\Dg= -\div_g(\nabla\cdot)$. A Green's function for the operator $(\Dg+\a)^k$ in $M$ is a function $G : M\times M \setminus \{(x,x)\,:\, x \in M\} \to \R$ such that, writing $G_x(y) := G(x,y)$ for all $x \neq y$ in $M$, we have $G_x \in L^1(M)$ for all $x\in M$, and for all $\phi \in C^\infty(M)$ and all $x\in M$,
    \[  \intM{G_x (\Dg+\a)^k \phi} = \phi(x).
        \]
    This is equivalent to saying that $(\Dg+\a)^k G_x = \delta_x$ in the distributional sense on $M$, where $\delta_x$ is the Dirac's delta distribution centered at $x\in M$.
\end{definition}

\par In this work, we are interested in explicit $\a$-dependent estimates for the Green's function and its derivatives, in particular as $\a$ gets large. Our main result is the following Theorem. Note that a closed Riemannian manifold has a positive injectivity radius, $\inj>0$.
\begin{theorem}\label{prop:bvorGga}
    Let $(M,g)$ be a closed Riemannian manifold, of dimension $n\geq 3$, let $k\geq 1$ with $n>2k$, and $\a >0$. The operator $(\Dg + \a)^k$ in $M$ has a unique Green's function $\Gga$, which is positive, symmetric, and is in $C^\infty(M\times M \setminus \{(x,x) \,:\, x\in M\})$. 
    Moreover, there exists $\a_0\geq 1$ such that we have the following :
    \begin{itemize}
        \item There is a constant $C>0$ such that for all $\a \geq \a_0$ and all $x\neq y$ in $M$ with $\sqrt{\a}\dg{x,y} \leq 1$, we have
        \begin{equation}\label{eq:bvorGg}
            \Gga(x,y) = c_{n,k}\, \dg{x,y}^{2k-n} \bpr{1 + \eta_\a(x,y)}
        \end{equation}
        with
        \[  \abs{\eta_\a(x,y)} \leq C \begin{cases}
            \sqa\, \dg{x,y} & n = 2k + 1\\
            \a\, \dg{x,y}^2 \big(1+\abs{\log\sqa\,\dg{x,y}}\big) & n = 2k + 2\\
            \a\, \dg{x,y}^2 & n \geq 2k + 3
            \end{cases},
            \]
        and where $c_{n,k}$ is an explicit positive constant given by \eqref{def:cnk} below.
        \item For all $0< \epsilon < 1$, there is a constant $C_\epsilon >0$ such that for all $\a \geq \a_0$ and all $x,y \in M$ with $\sqrt{\a}\dg{x,y} \geq 1$,
        \[  \Gga(x,y) \leq C_\epsilon \begin{cases}
            \dg{x,y}^{2k-n} \expsqa{x,y} & \text{if } \dg{x,y} < \inj/2\\
            e^{-(1-\epsilon)\sqa \inj/2} & \text{if } \dg{x,y} \geq \inj/2.
        \end{cases}
            \]
    \end{itemize}
\end{theorem}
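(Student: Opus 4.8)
\emph{Strategy.}\ I would construct $\Gga$ by the classical parametrix method, arranged so that every estimate carries explicit powers of $\a$. The model at the level of a single point is the constant-coefficient operator $(\D_{\R^n}+\a)^k$ on $\R^n$, whose fundamental solution $\Gamma_\a$ is a radial, Bessel-type kernel. The crucial device is the scaling identity $\Gamma_\a(x) = \a^{n/2 - k}\,\Gamma_1\bpr{\sqa\,\abs{x}}$, which reduces all the $\a$-dependence to estimates for the single function $\Gamma_1$, the fundamental solution of $(\D+1)^k$. I would then record three facts about $\Gamma_1$: its small-argument expansion $\Gamma_1(r) = c_{n,k}\,r^{2k-n}\bpr{1 + O(r^2)}$, with a logarithmic term appearing exactly when $n-2k$ is even, and where $c_{n,k}$ from \eqref{def:cnk} is the constant of the fundamental solution of $\D^k$ on $\R^n$; its large-argument behaviour $\Gamma_1(r)\sim c\,r^{(2k-n-1)/2}e^{-r}$, whose polynomial prefactor can be absorbed into any slightly worse exponential rate, giving $0\le\Gamma_1(r)\le C_\epsilon\, r^{2k-n}e^{-(1-\epsilon)r}$ for $r\ge 1$; and corresponding bounds for all derivatives up to order $2k$. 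Feeding the scaling back in, the transplanted model contributes exactly $c_{n,k}\dg{x,y}^{2k-n}$ near the diagonal and already displays the claimed exponential decay once $\sqa\,\dg{x,y}\ge 1$, so the whole proof reduces to controlling the discrepancy between $\Gga$ and this model.

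\emph{Construction, uniqueness, positivity.}\ Transplant $\Gamma_\a$ through the exponential chart, cut off at a scale comparable to $\inj$, to get a first parametrix $P_1$; then $(\Dg+\a)^k P_1(x,\cdot) = \delta_x + E_1(x,\cdot)$, where $E_1$ splits into a smooth, exponentially small tail produced by the cutoff (supported where $\dg{\cdot,\cdot}\gtrsim\inj$) and a near-diagonal part whose singularity is two powers of $\dg{\cdot,\cdot}$ milder than that of $\delta_x$, because in normal coordinates the metric coefficients differ from the Euclidean ones by $O(\dg{\cdot,\cdot}^2)$. Convolving $E_1$ against $P_1$ and iterating a finite number $N=N(n)$ of times yields a parametrix with a bounded, H\"older-continuous error $E_N$, and then $\Gga(x,\cdot) = P_N(x,\cdot) - u_x$, where $u_x$ solves $(\Dg+\a)^k u_x = E_N(x,\cdot)$ and exists by the coercivity recalled before Definition \ref{def:Green}. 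Uniqueness is immediate: two Green's functions differ by a distributional solution of $(\Dg+\a)^k w = 0$, which is smooth by elliptic regularity and then zero by coercivity; symmetry follows from uniqueness together with the self-adjointness of $(\Dg+\a)^k$, and off-diagonal smoothness is again elliptic regularity. For positivity I would use the factorization $(\Dg+\a)^k = (\Dg+\a)\circ\cdots\circ(\Dg+\a)$: the second-order operator $\Dg+\a$ with $\a>0$ is coercive and satisfies the maximum principle on the closed manifold $M$, so its Green's function $H$ is positive, and $\Gga$ is the $k$-fold iterated kernel of $H$, hence positive.

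\emph{Near-diagonal asymptotics.}\ In the regime $\sqa\,\dg{x,y}\le 1$ I would write $\Gga(x,y)$ as the transplanted model plus the parametrix corrections and the smooth remainder $u_x$, and estimate each separately. The model gives $c_{n,k}\dg{x,y}^{2k-n}$ plus, by the expansion of $\Gamma_1$ together with the $O(\dg{\cdot,\cdot}^2)$ metric distortion, a relative error $O(\a\dg{x,y}^2)$ when $n-2k\ge 3$; when $n-2k=2$ the same estimate carries the borderline logarithm coming from the $\log$ in the Bessel kernel; and when $n-2k=1$ the first subleading term of $\Gamma_1$ is only one power better, giving the relative error $O(\sqa\,\dg{x,y})$. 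Changing variables $r=\sqa\,\dg{x,y}$ turns these into precisely the three cases of $\eta_\a$. The parametrix corrections and $u_x$ are of strictly lower order in the same scaling: each iteration gains a factor $\dg{\cdot,\cdot}^2$, and any accompanying power of $\a$ is compensated by a further gain in $\dg{\cdot,\cdot}$, so in the regime $\sqa\,\dg{x,y}\le 1$ these terms remain within the stated error. The scaling identity is what makes all of this uniform in $\a\ge\a_0$.

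\emph{Exponential decay and the main difficulty.}\ For $\sqa\,\dg{x,y}\ge 1$ with $\dg{x,y}<\inj/2$, the transplanted model already gives $\lesssim_\epsilon\dg{x,y}^{2k-n}e^{-(1-\epsilon)\sqa\,\dg{x,y}}$ after absorbing the polynomial prefactor, and I would propagate this to $\Gga$ itself either through an Agmon-type weighted energy estimate for $(\Dg+\a)^k$ with weight $e^{(1-\epsilon)\sqa\,\dg{x,\cdot}}$, or, matching the positivity argument, by iterating $k$ times the comparison estimate for $\Dg+\a$ against a barrier $\Psi_{\epsilon,\a}$ carrying that weight. For $\dg{x,y}\ge\inj/2$ the uniform bound $e^{-(1-\epsilon)\sqa\inj/2}$ follows by chaining: applying the previous estimate along a finite chain of points at mutual distance $<\inj/2$ joining $x$ to $y$ and adding up the exponents via the triangle inequality. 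The main obstacle, in my view, is the $\a$-bookkeeping itself: one must check that the parametrix iteration and the residual equation preserve the sharp $\a$-weights uniformly for $\a\ge\a_0$, and that the near and far regimes dovetail at $\sqa\,\dg{x,y}\sim 1$; obtaining the \emph{sharp} constant $(1-\epsilon)$ — rather than merely some positive rate — in the exponential region is the other delicate point, and is exactly what forces the $\epsilon$-loss and the use of the precise Bessel asymptotics of $\Gamma_1$.
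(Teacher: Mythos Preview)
Your overall strategy matches the paper's: Euclidean Bessel-type model with the scaling $\Gamma_\a=\a^{(n-2k)/2}\Gamma_1(\sqa\,\cdot)$, transplant via the exponential map, iterate the parametrix finitely many times with Giraud-type convolution lemmas, solve a residual equation, and obtain positivity by writing $\Gga$ as the $k$-fold iterate of the second-order Green's function. The near-diagonal analysis you outline is also the paper's.

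Two points deserve comment. First, your ``chaining along a finite path of points'' for the region $\dg{x,y}\ge\inj/2$ does not work as stated: the Green's function is not a semigroup kernel, so there is no relation between $\Gga(x,y)$ and the values $\Gga(x,z),\,\Gga(z,y)$ at an intermediate point $z$ that would let you add exponents. In the paper this region is handled for free: the parametrix $\Gg^*$ is supported in a ball of radius $N\tau_0<\inj/2$, so beyond $\inj/2$ only the residual $u_{\a,x}$ contributes, and its bound already carries the factor $e^{-(1-\epsilon)\sqa\,\inj/2}$. In your barrier approach the fix is equally simple: once you know $G^{(1)}_x\le Ce^{-(1-\epsilon)\sqa\,\inj/2}$ on $\partial\Bal{x}{\inj/2}$, apply the maximum principle for $\Dg+\a$ on $M\setminus\Bal{x}{\inj/2}$ with the \emph{constant} supersolution, then iterate.

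Second, the residual $u_{\a,x}$ is where the paper's main technical work lies, and your proposal is thin here. Standard elliptic theory only gives $\|u_{\a,x}\|_{C^0}\le C_\a$ with an $\a$-dependent constant (the paper's Remark~\ref{rem:adiverg}), which is useless for both the near-diagonal error and the exponential decay. The paper gets the sharp $\a$-behaviour by a self-improving bootstrap: write $u_{\a,x}(y)=\int\Gga(y,z)\gamma_{\a,x}(z)\,dv_g$, split $\Gga=\Gg^*+u_{\a,\cdot}$, use symmetry to flip the second term, and close an inequality of the form $\Upsilon\le C\a^{-N+(n-2k)/2}+C\a^{-N}\Upsilon$ for $\Upsilon:=\sup|u_{\a,x}/\Pea|$. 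Your Agmon/barrier alternative is a genuinely different route and is plausible---a barrier $e^{-(1-\epsilon)\sqa\,\dg{x,\cdot}}$ is indeed a supersolution of $\Dg+\a$ for large $\a$---but it only directly controls the second-order Green's function; you would then need Giraud-type convolution estimates (with the exponential weights) to pass to the $k$-fold iterate, and separately to bound $u_{\a,x}$. Either way, the uniform-in-$\a$ control of $u_{\a,x}$ is the crux, and you should make that step explicit.
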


This Theorem highlights that 
when $\dg{x,y}$ is small in comparison to $1/\sqa$, the Green's function for $(\Dg+\a)^k$ in $M$ behaves to first order as the Green's function for the poly-Laplacian in $\R^n$, $\Dg[\xi]^k$, up to a remainder term on which we prove explicit bounds. On the other hand, when $\dg{x,y}\geq 1/\sqa$, we obtain an exponential decay. In particular, any region of $M$ situated at a fixed distance from a given point $x\in M$ will lie in this regime as $\a$ becomes large. Note also that most of the construction of the Green's function does not rely on the fact that $\a\geq \a_0$. It is only at $\a\to \infty$, however, that exponential estimates at finite distance are of interest. 
\par We point out that in the construction of the Green's function for $(\Dg+\a)^k$ that we give in Theorem \ref{prop:bvorGga}, the main difficulty resides in the presence of the diverging coefficient $\a$, rather than on the terms that could appear in a domain with boundary conditions. In this case, since we work on a closed manifold, there are no boundary conditions to consider. This follows from the geometrical context in which the operator appears. A new contribution of this work is the derivation of estimates on $\Gga$ which are explicit in the lower-order terms' coefficient. This novel consideration is mainly handled in section \ref{sec:remterm}. The methods developed in this article are flexible and could be adapted to construct and obtain sharp estimates on Green's functions for more general elliptic operators with one or more diverging coefficients.

\par Green's function for polyharmonic operators of order $2k$ in $n$-dimensional domains or manifolds, with $n>2k$, and with bounded coefficients, have been known to satisfy estimates of the following type: There exists $C>0$ such that 
\[  \abs{G(x,y)} \leq C \dg{x,y}^{2k-n}
    \]
for all $x\neq y$ (see \cite[Section 4]{GazGruSw10}, \cite{Rob10}). Theorem \ref{prop:bvorGga} improves these estimates for the specific polyharmonic operator $(\Dg+\a)^k$ in $M$, as $\a \to \infty$. The new highlighted dependence in $\a$ for the decay of $\Gga$ draws parallel to the well-known behavior of the Helmholtz kernel for the operator $-\D - \lambda^2$ in $\R^3$ (see for instance \cite{Duf15}). 

\par The article is structured as follows. In Section \ref{sec:DkRn}, we construct a fundamental solution in $\R^n$ for $(\Dg[\xi] + \a)^k$, and prove precise estimates using a modified Giraud's Lemma which is proved in the Appendix \ref{sec:giraud}. Here, we let $\Dg[\xi] = -\sum_{i=1}^n \partial_i^2$ be the non-negative Laplace operator in the Euclidean space. Section \ref{sec:riem} is devoted to the proof of Theorem \ref{prop:bvorGga}. Based on the method of Robert \cite{Rob10}, we iteratively construct an approximation of the Green's function in $M$ preserving the estimates of the Euclidean case. We then conclude the proof of the Theorem thanks to a self-improving argument that allows to estimate the remainder term. This is where we need to handle the presence of the diverging coefficient, which requires a particular attention in this crucial step, see Remark \ref{rem:adiverg}. Finally, we show estimates on the derivatives of the Green's function, in Proposition \ref{prop:bvorDgGga}. 

\begin{remark}[Notational conventions]\
    \begin{enumerate}
        \item We work on a manifold with fixed metric $g$. In the following, unless specified otherwise, all constants only depend on $(M,g)$, $n$, $k$, they are denoted $C$, and their explicit value can vary from line to line, sometimes even in the same line.
        \item Let $f : X \times Y \to \R$ be a function, we will write, for any fixed $x\in X$, $f_x : Y \to \R$ with $f_x(y) := f(x,y)$.
        \item We will write $\Bal{x}{R}$ for the ball of center $x$ and radius $R>0$, either in $M$ or in the Euclidean space $\R^n$, without distinction. We also define the diagonal set $\Diag := \{(x,y)\,:\, x=y\}$ either in $M$ or in $\R^n$, the ambient space will always be clear from context.
    \end{enumerate}
\end{remark}

\section{The Green's function for $(\Dg[\xi] + \a)^k$ in $\R^n$}\label{sec:DkRn}
Throughout this Section, we let $\Dg[\xi] := -\sum_{i=1}^n \partial_i^2$ be the non-negative Laplacian in $\R^n$. We adopt the notation $\Dg[\xi]$ instead of $-\D$ to maintain consistency with the manifold case introduced below.
In this Section, we prove uniqueness and pointwise bounds for the Green's function of the elliptic polyharmonic operator $(\Dg[\xi]+\a)^k$ in the Euclidean space $\R^n$.
\subsection{Green's function of the poly-Laplacian in $\R^n$}
We start by gathering basic results for the fundamental solution of the poly-Laplacian operator $\Dg[\xi]^k$ in $\R^n$.

\par Fix an integer $k\geq 1$, and $n > 2k$, then define
\begin{equation}\label{def:cnk}
    c_{n,k} = \frac{1}{4^{k} \pi^{n/2}(k-1)!}\, \Gamma\bpr{\frac{n-2k}{2}},
\end{equation}
where $\Gamma(t)$ is the well-known Gamma function. This constant $c_{n,k}$ is chosen such that
\begin{equation}\label{def:GDRn}
    \G{k}(x,y) = c_{n,k} \frac{1}{\abs{x-y}^{n-2k}}
\end{equation}
is a fundamental solution for the poly-Laplacian operator $\Dg[\xi]^k$ in $\R^n$, see \cite[Section 2.6]{GazGruSw10}. This means in particular that 
\[  \Dg[\xi]^k \G{k}_x = 0 \quad \text{in the weak sense on $\R^n \setminus \{0\}$}.
    \]

\subsection{Construction and uniqueness.} Fix $\a>0$, the purpose this Section is to show the existence and study the behavior of the Green's function for $(\Dg[\xi]+\a)^k$ in $\R^n$. In particular, we are interested in its dependence on the coefficient $\a >0$. We start by observing that the Green's functions for different values of $\a$ are related by a simple scaling property. Then, we obtain an exact expression for the Green's function of the operator $\Dg[\xi]+1$. In a second step, we will use properties of the convolutions of distributions to retrieve expressions for the polyharmonic operators $(\Dg[\xi]+\a)^k$, $k\geq 1$. Finally, we use a modified version of Giraud's Lemma, proved in \ref{prop:expGir}, to obtain sharp pointwise bounds on the Green's function.
\begin{definition}
    Fix $k\geq 1$, $n> 2k$ and $\a >0$, we say that $\Ga{k}(x,y)$ is a fundamental solution for the polyharmonic operator $(\Dg[\xi]+\a)^k$ in $\R^n$ if for all $x\in \R^n$, $\Ga[x]{k}\in L^1_{loc}(\R^n)$ and
    \[  \int_{\R^n} \Ga{k}(x,y) (\Dg[\xi] + \a)^k \phi(y)\, dy = \phi(x) \quad \text{for all $\phi \in \Cct(\R^n)$.}
        \]
\end{definition}
\begin{remark}\label{rem:GkaGk1}
    It is straightforward to compute that $\Go{k}(x,y)$ is a fundamental solution for the operator $(\Dg[\xi]+1)^k$ in $\R^n$ if and only if
    \begin{equation}\label{eq:GkaGk1}  
        \Ga{k}(x,y) := \a^{\frac{n-2k}{2}} \Go{k}(\sqa x,\sqa y)
        \end{equation}
    is a fundamental solution for the operator $(\Dg[\xi]+\a)^k$ in $\R^n$. 
\end{remark}
With this observation, we only study the Green's function for $(\Dg[\xi]+1)^k$ in a first step, and then retrieve the general case $\a>0$ using relation \eqref{eq:GkaGk1}.
\par Recall the definition of the Bessel function of the second kind of order $\nu > 0$, $K_\nu(r)$, which is singular at the origin, and solution to the second order ordinary differential equation
\[  u''(r) + \frac{1}{r} u'(r) - \bpr{1 + \frac{\nu^2}{r^2}} u(r) = 0 \qquad \text{on $\R^+ \setminus \{0\}$}.
    \]
These are well-known functions with explicit behavior (see \cite{AbrSteg}).

\begin{proposition}\label{prop:GreenD1}
    Fix $n\geq 3$, then
    \[  \Go{}(x,y) := (2\pi)^{-\frac{n}{2}} \abs{x-y}^{-\frac{n-2}{2}} K_{\frac{n-2}{2}}(\abs{x-y})
        \]
    is a fundamental solution for the operator $(\Dg[\xi]+1)$ in $\R^n$.
\end{proposition}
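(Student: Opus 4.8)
The plan is to use the translation invariance of $\Dg[\xi]+1$ to reduce to the statement $(\Dg[\xi]+1)u=\delta_0$ in $\mathcal{D}'(\R^n)$, where $u(y):=\Go{}(0,y)=(2\pi)^{-n/2}\abs{y}^{-(n-2)/2}K_{(n-2)/2}(\abs{y})$, and then to prove this in three steps: (i) $u$ solves $(\Dg[\xi]+1)u=0$ classically on $\R^n\setminus\bbr{0}$; (ii) $u\in L^1(\R^n)$ and has the right singular profile at the origin; (iii) a Green's-identity computation on $\R^n\setminus\Bal{0}{\epsilon}$ produces the Dirac mass as $\epsilon\to0$. For step (i), since $u=f(\abs{y})$ is radial one uses $\Delta\bpr{f(\abs{y})}=f''(r)+\tfrac{n-1}{r}f'(r)$ and substitutes $f(r)=r^{-\nu}w(r)$ with $\nu=\tfrac{n-2}{2}$: expanding, the coefficient of $r^{-\nu-1}w'$ becomes $n-1-2\nu=1$ and the coefficient of $r^{-\nu-2}w$ collapses to $-\nu^2$, so that
\[
-\Delta u+u=-r^{-\nu}\bsq{\,w''+\tfrac1r w'-\bpr{1+\tfrac{\nu^2}{r^2}}w\,}.
\]
Hence $(\Dg[\xi]+1)u=0$ on $\R^n\setminus\bbr{0}$ exactly because $w=K_\nu$ solves the modified Bessel equation of order $\nu$ recalled above; one chooses $K_\nu$ rather than $I_\nu$ precisely so that the solution is integrable.

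For step (ii), the classical expansions of $K_\nu$, namely $K_\nu(r)\sim\tfrac{\Gamma(\nu)}{2}\bpr{\tfrac{2}{r}}^{\nu}$ as $r\to0^+$ (valid for $\nu>0$, and here $\nu=\tfrac{n-2}{2}\geq\tfrac12$) and $K_\nu(r)\sim\sqrt{\tfrac{\pi}{2r}}\,e^{-r}$ as $r\to\infty$, give $u(y)=c_{n,1}\abs{y}^{2-n}\bpr{1+o(1)}$ as $y\to0$ together with exponential decay at infinity, so $u\in L^1(\R^n)\cap L^1_{\mathrm{loc}}(\R^n)$; here $c_{n,1}$ is exactly the constant from \eqref{def:cnk} with $k=1$, because $(2\pi)^{-n/2}\cdot\tfrac{\Gamma(\nu)}{2}\cdot 2^{\nu}=2^{-2}\pi^{-n/2}\Gamma(\tfrac{n-2}{2})$. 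Differentiating the same expansion yields $f'(r)=(2-n)c_{n,1}\,r^{1-n}\bpr{1+o(1)}$ as $r\to0^+$.

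For step (iii), fix $\phi\in\Cct(\R^n)$; since $\phi$ is compactly supported, Green's second identity on $\R^n\setminus\Bal{0}{\epsilon}$ reads
\[
\int_{\R^n\setminus\Bal{0}{\epsilon}} u\,(\Dg[\xi]+1)\phi\;dy=\int_{\R^n\setminus\Bal{0}{\epsilon}}\bpr{(\Dg[\xi]+1)u}\,\phi\;dy+\int_{\partial\Bal{0}{\epsilon}}\bpr{\phi\,\partial_\nu u-u\,\partial_\nu\phi}\,d\sigma,
\]
with $\nu$ the unit normal on $\partial\Bal{0}{\epsilon}$ pointing towards $0$. The first term on the right vanishes by step (i). By step (ii), $\int_{\partial\Bal{0}{\epsilon}} u\,\partial_\nu\phi\,d\sigma=O(\epsilon^{2-n}\cdot\epsilon^{n-1})=O(\epsilon)\to0$, while $\partial_\nu u=-f'(\epsilon)\sim(n-2)c_{n,1}\epsilon^{1-n}$ gives $\int_{\partial\Bal{0}{\epsilon}}\phi\,\partial_\nu u\,d\sigma\to(n-2)c_{n,1}\,\omega_{n-1}\,\phi(0)$, where $\omega_{n-1}=2\pi^{n/2}/\Gamma(\tfrac n2)$ is the area of the unit sphere; the normalization of $c_{n,1}$, equivalently the identity $(n-2)\Gamma(\tfrac{n-2}{2})=2\Gamma(\tfrac n2)$, makes $(n-2)c_{n,1}\omega_{n-1}=1$. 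Finally $\int_{\Bal{0}{\epsilon}} u\,(\Dg[\xi]+1)\phi\,dy\to0$ because $u\in L^1_{\mathrm{loc}}$, so letting $\epsilon\to0$ (using $u\in L^1(\R^n)$ on the left-hand side) yields $\int_{\R^n}u\,(\Dg[\xi]+1)\phi\;dy=\phi(0)$; the general point $x$ follows by translating.

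The main obstacle is the boundary-term bookkeeping in step (iii), and in particular verifying that the multiplicative constant in front of $K_\nu$ is precisely the one that outputs a unit Dirac mass — this is really the assertion that the constant appearing in $\Go{}$ coincides with the Euclidean fundamental-solution constant $c_{n,1}$, which in the end reduces to a bare Gamma-function identity. Everything else is the standard modified-Bessel ODE computation of step (i) and the elementary asymptotics of $K_\nu$. Alternatively, one could bypass steps (i) and (iii) by recognizing $u$ as the order-$2$ Bessel potential kernel, whose Fourier transform is $\bpr{1+\abs{\xi}^2}^{-1}$, but the direct verification above keeps Section~\ref{sec:DkRn} self-contained.
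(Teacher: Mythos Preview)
Your proof is correct and follows essentially the same approach as the paper: both verify $(\Dg[\xi]+1)u=0$ on $\R^n\setminus\{0\}$ via the modified Bessel ODE in radial coordinates, invoke the standard small- and large-argument asymptotics of $K_\nu$, and then recover the Dirac mass by Green's identity on $\R^n\setminus\Bal{0}{\epsilon}$, tracking the same three terms. Your write-up is slightly more explicit in checking that the constant $(2\pi)^{-n/2}$ produces exactly $c_{n,1}$ via the Gamma identity $(n-2)\Gamma(\tfrac{n-2}{2})=2\Gamma(\tfrac{n}{2})$, and you mention the Bessel-potential/Fourier alternative, but the substance is identical.
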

\begin{proof}
    Start by observing that, thanks to the asymptotics for $K_\nu$ found in \cite{AbrSteg}, we have the following :
    \begin{itemize}
        \item When $\abs{x-y} \ll 1$,
        \begin{equation}\label{eq:bvorGa1}
        \begin{aligned}
            \Go{}(x,y) &= \frac{\pi^{-\frac{n}{2}}}{4} \Gamma\bpr{\frac{n-2}{2}} \abs{x-y}^{-(n-2)}(1+o(1))\\
                &= \frac{1}{(n-2)\omega_{n-1}} \abs{x-y}^{2-n}(1+o(1))\\
            \frac{\partial}{\partial y_i} \Go{}(x,y) &= \frac{1}{\omega_{n-1}} \frac{(x_i - y_i)}{\abs{x-y}^n}(1+o(1))\\
            \LRa \quad \abs{\nabla \Go{}}(x,y) &= \frac{1}{\omega_{n-1}} \abs{x-y}^{1-n} (1+o(1));
        \end{aligned}
        \end{equation}
        \item When $\abs{x-y} \gg 1$,
        \begin{equation}\label{eq:bvorGa2}
        \begin{aligned}
            \Go{}(x,y) &= \frac{(2\pi)^{-\frac{n-1}{2}}}{2} \abs{x-y}^{-\frac{n-1}{2}} e^{-\abs{x-y}}(1+o(\abs{x-y}^{-1}))\\
            \abs{\nabla \Go{}}(x,y) &= \frac{(2\pi)^{-\frac{n-1}{2}}}{2} \abs{x-y}^{-\frac{n-1}{2}} e^{-\abs{x-y}}(1+o(\abs{x-y}^{-1})).
        \end{aligned}
        \end{equation}
    \end{itemize}
    We now show that
    \begin{equation}\label{tmp:bes0}
        (\Dg[\xi]+1)\Go[x]{}(y) = 0 \qquad \text{for all $y \neq x$.}
    \end{equation}
    By the expression of $\Go{}$, we write $r = \abs{x-y}$ and define
    \[  H(r) := \Go{}(x,y) = (2\pi)^{-\frac{n}{2}} r^{-\frac{n-2}{2}} K_{\frac{n-2}{2}}(r).
        \]
    Using the expression of the Laplacian in spherical coordinates on $\R^n$, \eqref{tmp:bes0} re-writes as
    \[  \Dsur{^2}{r^2} H(r) + \frac{n-1}{r} \Dsur{}{r}H(r) - H(r) = 0.
        \]
    Now $H(r)$ satisfies this last equation for $r>0$, by the definition of the Bessel function of the second kind $K_{\frac{n-2}{2}}(r)$. Thus, we conclude that $\Go{}$ solves \eqref{tmp:bes0} for all $x\neq y$.
    \par For the second part of the proof, take $\phi \in \Cct(\R^n)$, we show that for all $x \in \R^n$
    \[  \int_{\R^n} \Go[x]{}(y) (\Dg[\xi] + 1) \phi(y) \,dy = \phi(x).
        \]
    We have
    \begin{equation}\label{eq:decGabc}
        \begin{aligned}
            \int_{\R^n} (\Dg[\xi] \phi + \phi) \Go[x]{} \, dy &= \lim_{\delta \to 0} \int_{\Bal{0}{\delta}^c} (\Dg[\xi] \phi + \phi) \Go[x]{}\, dy\\
            &= \lim_{\delta \to 0} \big[ \,a(\delta) + b(\delta) + c(\delta)\,\big] ,
        \end{aligned}
    \end{equation}
    where 
    \[  a(\delta) := \int_{\Bal{x}{\delta}^c} (\Dg[\xi] \Go[x]{} + \Go[x]{})\, \phi \,dy = 0
        \]
    since $\Go[x]{}$ satisfies \eqref{tmp:bes0} on $\R^n \setminus\{x\}$, and with \eqref{eq:bvorGa1},
    \begin{equation}\label{eq:abcexpr}
        \begin{aligned}
            b(\delta) &:= \int_{\partial \Bal{x}{\delta}} \Go[x]{} \partial_\nu \phi\, d\sigma(y) \sim \delta^{-(n-2)}\delta^{n-1} = \bigO(\delta),\\
            c(\delta) &:= -\int_{\partial \Bal{x}{\delta}} \partial_\nu \Go[x]{} \phi \, d\sigma(y) = \frac{\pi^{-\frac{n}{2}}}{2} \Gamma\bpr{\frac{n}{2}} \omega_{n-1} \,\phi(x) + o(1)
        \end{aligned}
        \end{equation}
    as $\delta\to 0$. Using the expression for the surface area of the sphere, we get
    \[  \int_{\R^n} (\Dg[\xi] \phi + \phi) \Go{} \, dy = \phi(x).
        \]
    We can conclude that $\Go{}(x,y)$ is a fundamental solution for the operator $(\Dg[\xi]+1)$ on $\R^n$. 
\end{proof}
\begin{remark}\label{rem:Go1asG}
    We additionally observe that, for $ \abs{x-y} \ll 1$, the Green's function $\Go{}$ and its gradient are equal to first order to the standard Green's function for the Laplacian in $\R^n$ and its gradient, respectively. Moreover, $\Go{} \in C^\infty(\R^n\times \R^n \setminus \Diag)$ only depends on $\abs{x-y}$.
\end{remark}

The following result is technical and establishes improved bounds for fundamental solutions.
\begin{lemma}\label{prop:decL1}
    Let $u \in C^{2k}(\R^n \setminus \{0\})$ be a function satisfying $(\Dg[\xi]+1)^k u = 0$ on $\R^n \setminus \{0\}$, and such that there exist $C>0$, $\rho \in \R$, with 
    \begin{equation*}
        \abs{u(x)} \leq \begin{cases}
            C \abs{x}^{2k-n} & \text{if } 0<\abs{x} \leq 1\\
            C \abs{x}^\rho e^{-\abs{x}} & \text{if } \abs{x} \geq 1
        \end{cases}.
    \end{equation*}
    Then for $l = 0, \ldots 2k$, there is $C_l >0$ such that
    \begin{equation*}
        \abs{\nabla^l u(x)} \leq C_l \begin{cases}
            \abs{x}^{-(n-2k+l)} & 0< \abs{x} \leq 1\\
            \abs{x}^\rho e^{-\abs{x}} & \abs{x} \geq 1
        \end{cases}.
    \end{equation*}
\end{lemma}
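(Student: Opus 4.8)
The plan is to use interior elliptic estimates for the operator $(\Dg[\xi]+1)^k$, applied on well-chosen annuli, combined with a scaling argument to track the dependence on $\abs{x}$ in the small-distance regime. For a point $x_0$ with $0 < \abs{x_0} \leq 1$, set $R := \abs{x_0}/2$. On the ball $\Bal{x_0}{R}$ the function $u$ is smooth and solves $(\Dg[\xi]+1)^k u = 0$, and by the hypothesis $\abs{u} \leq C \abs{x}^{2k-n} \leq C' R^{2k-n}$ on the larger ball $\Bal{x_0}{2R} = \Bal{x_0}{\abs{x_0}}$, since every point there has norm comparable to $\abs{x_0}$. Rescale by setting $v(z) := u(x_0 + Rz)$ for $z \in \Bal{0}{2}$; then $v$ satisfies $(\Dg[\xi] + R^2)^k v = 0$ on $\Bal{0}{2}$, which is a uniformly elliptic operator of order $2k$ with coefficients bounded independently of $R$ (as $R \leq 1/2$). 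Interior $L^\infty$ elliptic estimates of order $2k$ give $\norm{\nabla^l v}_{L^\infty(\Bal{0}{1})} \leq C_l \norm{v}_{L^\infty(\Bal{0}{2})} \leq C_l' R^{2k-n}$ for $l = 0,\ldots,2k$. Undoing the scaling, $\nabla^l v(z) = R^l (\nabla^l u)(x_0 + Rz)$, so evaluating at $z=0$ yields $\abs{\nabla^l u(x_0)} \leq C_l' R^{-l} R^{2k-n} = C_l'' \abs{x_0}^{2k-n-l}$, which is exactly the claimed bound for $0 < \abs{x} \leq 1$.

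For the large-distance regime $\abs{x_0} \geq 1$, I would argue similarly but without rescaling, since the relevant length scale is already of order $1$. Fix $x_0$ with $\abs{x_0} \geq 1$ and work on $\Bal{x_0}{1/2}$, on which $u$ is smooth and $(\Dg[\xi]+1)^k u = 0$. On $\Bal{x_0}{1}$ we have, from the hypothesis, $\abs{u(x)} \leq C \abs{x}^\rho e^{-\abs{x}}$; since $\abs{x} \in [\abs{x_0} - 1, \abs{x_0}+1]$ there and both $\abs{x}^\rho$ and $e^{-\abs{x}}$ are comparable to $\abs{x_0}^\rho$ and $e^{-\abs{x_0}}$ respectively up to constants (with the minor caveat that one should use $\Bal{x_0}{1/4}$ inside $\Bal{x_0}{1/2}$ if $\abs{x_0}$ is close to $1$, to stay in the region where the hypothesis applies; for $\abs{x_0}$ near $1$ one can also just invoke the small-distance bound already proved, which controls $\nabla^l u$ on $\{1/2 \leq \abs{x}\leq 2\}$ by a constant). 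Thus $\norm{u}_{L^\infty(\Bal{x_0}{1})} \leq C \abs{x_0}^\rho e^{-\abs{x_0}}$, and interior elliptic estimates of order $2k$ on the fixed-size balls give $\abs{\nabla^l u(x_0)} \leq C_l \norm{u}_{L^\infty(\Bal{x_0}{1})} \leq C_l \abs{x_0}^\rho e^{-\abs{x_0}}$, as desired.

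The main point requiring care, and the only genuine obstacle, is making sure the elliptic estimate is applied with constants independent of the relevant parameter ($R$ in the first case, nothing in the second). In the rescaled problem $(\Dg[\xi]+R^2)^k v = 0$ the zeroth-order coefficient $R^{2k}$ is bounded by $1$, so standard interior $W^{2k,p}$ (hence, by Sobolev embedding with $p$ large, $C^{2k}$) estimates for constant-coefficient elliptic operators of order $2k$ apply with uniform constants; alternatively one can write $(\Dg[\xi]+R^2)^k$ as $\Dg[\xi]^k$ plus lower-order terms with small bounded coefficients and absorb them. I would cite the iterated second-order Schauder/Calderón–Zygmund theory (as the paper does for \eqref{tmpdgu}) or directly a higher-order interior estimate. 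A clean way to organize everything is to prove the bound on the dyadic annulus $A_j := \{2^{-j-1} \leq \abs{x} \leq 2^{-j}\}$ for each $j \geq 0$ and, symmetrically, on $\{2^{j} \leq \abs{x} \leq 2^{j+1}\}$ for $j \geq 0$, since on each such annulus the rescaled domain is a fixed annular region and the estimate is a single application of interior regularity; patching over $j$ then gives the stated pointwise bounds for all $x \neq 0$. This also transparently handles the transition near $\abs{x} = 1$.
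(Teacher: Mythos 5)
Your approach is essentially the same as the paper's: rescale so that the operator becomes $(\Dg[\xi]+\lambda)^k$ with bounded $\lambda$, apply interior elliptic estimates with constants uniform in the scale, and undo the rescaling; then treat the far regime by working on unit-size balls directly. The paper uses the normalization $v(y)=u(\abs{x}y)$ centered at $x/\abs{x}$ and works on $\Bal{\frac{x}{\abs{x}}}{1/2}$, whereas you use $v(z)=u(x_0+Rz)$ centered at $0$; these are equivalent up to a translation. Both handle the matching at $\abs{x}\approx 1$ by a comparability argument.

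There is, however, a concrete error in your small-radius step. With $R=\abs{x_0}/2$, the ball $\Bal{x_0}{2R}=\Bal{x_0}{\abs{x_0}}$ touches the origin, so it is \emph{not} true that ``every point there has norm comparable to $\abs{x_0}$'': points $x_0+Rz$ with $\abs{z}$ near $2$ have $\abs{x_0+Rz}\to 0$, and hence $\abs{x_0+Rz}^{2k-n}\to\infty$ (recall $2k-n<0$). Consequently the claimed bound $\lnorm[\infty]{v}_{\Bal{0}{2}}\leq C'R^{2k-n}$ is false as stated, and the interior estimate on $\Bal{0}{1}$ cannot be invoked from that sup norm. The fix is trivial and standard (the paper's choice avoids this): take $R=\abs{x_0}/4$, or equivalently apply the interior estimate on $\Bal{0}{1/2}$ with data on $\Bal{0}{1}$; then for $\abs{z}\leq 1$ one has $\abs{x_0+Rz}\geq\abs{x_0}-R\geq\abs{x_0}/2$, so the norm is genuinely comparable to $\abs{x_0}$ and the rest of the computation goes through unchanged, giving $\abs{\nabla^l u(x_0)}\leq C_l\abs{x_0}^{2k-n-l}$. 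Your remark about the dyadic-annulus organization and the transition region near $\abs{x}=1$ is sound once this adjustment is made.
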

\begin{proof}
    Let $x\neq 0$ be fixed. Notice that there is $C>0$ such that $t^\rho e^{-t} \leq C t^{2k-n}$ for all $t\geq 1$, so that 
    \begin{equation}\label{tmp:bndu}
        \abs{u(x)} \leq C \abs{x}^{2k-n} \qquad \forall~x\neq 0.
    \end{equation} 
    When $\abs{x} \leq 2$, define $v(y) = u(\abs{x} y)$ on a ball $\Bal{\frac{x}{\abs{x}}}{1/2}\not\ni 0$, and write $\lambda = \abs{x}^2 \neq 0$. We compute
    \begin{equation*}  
        (\Dg[\xi] + \lambda)^k v(y) = \abs{x}^{2k} \bpr{(\Dg[\xi]+1)^k u}(\abs{x} y) = 0 \qquad \forall\, y \in \Bal{\frac{x}{\abs{x}}}{1/2}.
        \end{equation*}
    By standard elliptic theory, since $\lambda \leq 4$, there is $C>0$ independent of $x$ such that $v\in C^{2k}(\Bal{\frac{x}{\abs{x}}}{1/4})$, and for all $y \in \Bal{\frac{x}{\abs{x}}}{1/4}$
    \begin{equation}\label{tmp:ineq}
    \begin{aligned}
        \abs{\nabla^l v(y)} &\leq C \Big( \Lnorm[(\Bal{\frac{x}{\abs{x}}}{1/2})]{\infty}{(\Dg[\xi] + \lambda)^k v} + \Lnorm[(\Bal{\frac{x}{\abs{x}}}{1/2})]{\infty}{v}\Big)\\
        &= C \sup_{z \in \Bal{\frac{x}{\abs{x}}}{1/2}} \abs{u(\abs{x} z)} \leq \sup_{z \in \Bal{\frac{x}{\abs{x}}}{1/2}} \frac{C}{\abs{\abs{x}z}^{n-2k}} \leq \frac{C}{\abs{x}^{n-2k}}
    \end{aligned}
    \end{equation}
    using \eqref{tmp:bndu}, and since $\abs{z} \geq 1/2$ for all $z\in \Bal{\frac{x}{\abs{x}}}{1/2}$. Now,
    \[  \abs{\nabla^l v(y)} = \abs{x}^l \abs{\nabla^l u}(\abs{x} y),
        \] 
    we evaluate inequality \eqref{tmp:ineq} at $y = \frac{x}{\abs{x}} \in \Bal{\frac{x}{\abs{x}}}{1/4}$ to obtain
    \[  \abs{\nabla^l u}(x) \leq \frac{C}{\abs{x}^{n-2k+l}} \qquad \forall\, \abs{x} \leq 2.
        \]
    \par On the other hand, when $\abs{x} > 2$, we use elliptic theory for $u$ on a ball $\Bal{x}{1} \subset \R^n \setminus \Bal{0}{1}$. This shows that there is $C>0$ independent of $x$ such that for all $y \in \Bal{x}{1/2}$,
    \begin{align*}  \abs{\nabla^l u}(y) &\leq C \Big( \Lnorm[\Bal{x}{1}]{\infty}{(\Dg[\xi] + \lambda)^k u} + \Lnorm[(\Bal{x}{1})]{\infty}{v}\Big) \leq C \sup_{z \in \Bal{x}{1}} \abs{z}^\rho e^{-\abs{z}},
        \end{align*}
    where this last inequality follows from the assumption on $u$.
    Observe that, when $\abs{x} \geq 2$, $\frac{\abs{x}}{2} \leq \abs{y} \leq \frac{3\abs{x}}{2}$ and $\abs{y} \geq \abs{x} -1$ for all $y \in \Bal{x}{1}$, so that no matter if $\rho$ is positive or negative,
    \[  \abs{\nabla^l u}(y) \leq C \abs{x}^\rho e^{-\abs{x}}.
        \]
    Evaluating this inequality at $y = x \in \Bal{x}{\epsilon}$ gives the result for $\abs{x}>2$.
    \par For the intermediate values $1 \leq \abs{x} \leq 2$, the two regimes coincide, up to a constant. The first part of the proof gives 
    \[  \abs{\nabla^l u}(x) \leq \frac{C}{\abs{x}^{n-2k+l}}.
        \]
    But now for $l = 0,\ldots 2k$, we have $C_1, C_2$ independent of $x$ such that, if $1 \leq \abs{x} \leq 2$,
    \[\begin{aligned}  
        \abs{x}^{-(n-2k+l)} &\leq C_1 \\
        C_1 \leq \abs{x}^\rho e^{-\abs{x}} &\leq C_2 
    \end{aligned},
    \]
    and we conclude.
\end{proof}

\begin{definition}\label{def:clasG}
    Fix $k\geq 1$ and $n>2k$, we define the space $\clasG_k$, of all functions $u \in C^{2k}(\R^n \times \R^n \setminus \Diag)$ such that the following holds.
    \begin{itemize}
        \item There exists $C>0$ such that $\abs{u(x,y)} \leq C\abs{x-y}^{-(n-2k)}$ when $\abs{x-y}\leq 1$;
        \item For all $p\geq 1$, there is $C_p >0$ such that for $l = 0, \ldots 2k$,
        \[  \abs{\nabla^l u(x,y)} \leq C_p \abs{x-y}^{-p} \quad \text{when $\abs{x-y}\geq 1$}.
        \]
    \end{itemize}
\end{definition}

\begin{lemma}\label{prop:uniqL1}
    There is a unique fundamental solution of $(\Dg[\xi] + 1)^k$ in $\R^n$ in the class $\clasG_k$. 
\end{lemma}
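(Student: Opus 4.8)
The plan is to show that the difference of two fundamental solutions in the class $\clasG_k$ is identically zero. So suppose $\Go{k}$ and $\tilde{H}^{(k)}_1$ are both fundamental solutions of $(\Dg[\xi]+1)^k$ in $\R^n$ lying in $\clasG_k$, and set $w := \Go{k} - \tilde{H}^{(k)}_1$. For each fixed $x\in\R^n$, the function $w_x$ satisfies $(\Dg[\xi]+1)^k w_x = 0$ in the distributional sense on all of $\R^n$ (the Dirac masses cancel). First I would invoke elliptic regularity: since $(\Dg[\xi]+1)^k$ is elliptic with smooth coefficients and $w_x$ is a distributional solution on all of $\R^n$, $w_x$ is in fact smooth on $\R^n$, in particular across the point $x$ where the two summands were individually singular. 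The bounds defining $\clasG_k$ force $w_x$ to be bounded near $x$ (each of $\Go{k}(x,\cdot)$ and $\tilde H^{(k)}_1(x,\cdot)$ is $O(|x-y|^{2k-n})$, but their difference, being continuous by regularity, is locally bounded), and away from the diagonal $w_x(y)\to 0$ faster than any polynomial as $|y|\to\infty$; in particular $w_x\in L^2(\R^n)$, and more.

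Next, the key step is to upgrade the decay of $w_x$ from polynomial to exponential, and then to promote it to the statement $w_x\equiv 0$. For this I would use the coercivity / spectral gap of $\Dg[\xi]+1$: for a smooth $L^2$ solution of $(\Dg[\xi]+1)^k w_x=0$ on $\R^n$ with enough decay to justify integration by parts, one has
\[
    0 = \int_{\R^n} w_x\,(\Dg[\xi]+1)^k w_x\,dy = \int_{\R^n} \big|(\Dg[\xi]+1)^{k/2} w_x\big|^2\,dy \geq \int_{\R^n} |w_x|^2\,dy,
\]
where the last inequality is just $\|(\Dg[\xi]+1)^{k/2}v\|_2^2 = \sum_{j=0}^k \binom{k}{j}\|\Dg[\xi]^{j/2}v\|_2^2 \geq \|v\|_2^2$ for $v=w_x$ (reading $(\Dg[\xi]+1)^{k/2}$ via the norm introduced in the paper, or more simply via Plancherel: $\widehat{w_x}(\xi)\,(|\xi|^2+1)^k$ has the same $L^2$ norm as the left side and the factor $(|\xi|^2+1)^k\geq 1$ gives the bound). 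Hence $w_x\equiv 0$. The only point requiring care is the justification of the integration by parts, i.e. that the boundary terms at infinity vanish: this is where the polynomial decay of $w_x$ and of its derivatives up to order $2k$ — which is exactly what membership in $\clasG_k$ provides for $\Go{k}$ and $\tilde H^{(k)}_1$, hence for $w$, away from the diagonal — together with elliptic estimates near the diagonal is used. Concretely I would integrate over $\R^n\setminus \Bal{0}{R}$ wait, rather over $\Bal{0}{R}$ and let $R\to\infty$, controlling the surface integrals on $\partial \Bal{0}{R}$ by $\sup_{\partial\Bal{0}{R}}|w_x||\nabla^j w_x| \cdot R^{n-1}$, which $\to 0$ since each factor decays faster than any power of $R$. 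Alternatively, and more cleanly, one can avoid integration by parts entirely: since $w_x$ is a tempered distribution (it is smooth and polynomially decaying), take Fourier transforms to get $(|\xi|^2+1)^k\,\widehat{w_x}(\xi)=0$, and since $(|\xi|^2+1)^k$ never vanishes, $\widehat{w_x}=0$, so $w_x=0$.

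The main obstacle — really the only substantive point — is verifying that $w_x$ is a genuine distributional solution on all of $\R^n$ and is regular enough (a tempered distribution, or an $L^2$ function with $L^2$ derivatives up to order $2k$) to run one of the two arguments above; this rests on the removable-singularity/elliptic-regularity fact that a distributional solution of an elliptic equation is smooth, applied to $w_x$ across $y=x$. Once that is in hand, either the Plancherel estimate or the Fourier-transform argument closes the proof in one line. I expect the write-up to: (1) form $w=\Go{k}-\tilde H^{(k)}_1$ and check $(\Dg[\xi]+1)^kw_x=0$ in $\mathcal{D}'(\R^n)$; (2) apply elliptic regularity to get $w_x\in C^\infty$, and combine with the $\clasG_k$ bounds to see $w_x$ is bounded near $x$ and decays superpolynomially at infinity, hence $w_x\in L^2$ and is tempered; (3) conclude $w_x\equiv0$ via $\widehat{w_x}(\xi)(|\xi|^2+1)^k=0$; (4) since $x$ was arbitrary, $\Go{k}=\tilde H^{(k)}_1$.
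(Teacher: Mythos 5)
Your argument is correct, and the first half (pass to the difference $w_x$, observe the Dirac masses cancel so that $(\Dg[\xi]+1)^k w_x=0$ as a distribution on all of $\R^n$, invoke elliptic regularity to remove the singularity at $x$, and use the $\clasG_k$ decay to conclude $w_x$ is smooth, bounded, and rapidly decreasing) matches the paper's removable-singularity step, where the same conclusion is reached by an explicit $L^p$-Sobolev bootstrap. Where you diverge is the conclusion. The paper multiplies $f_x$ by a cutoff $\chi_R$ supported in $\Bal{x}{R}$ with $\chi_R\equiv 1$ on $\Bal{x}{R/2}$, tests $(\Dg[\xi]+1)^k(\chi_Rf_x)$ against $\chi_Rf_x$, integrates by parts, estimates the resulting commutator terms (which are supported in the annulus $\Bal{x}{R}\setminus\Bal{x}{R/2}$) using the super-polynomial decay of $f_x$ and $\nabla^lf_x$ from the class $\clasG_k$, and finishes with Fatou's lemma applied to the Sobolev energy $\Snorm[(\R^n)]{k}{f_x}$. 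You instead observe that $w_x$ is a tempered distribution, so the equation extends to $\mathcal{S}'(\R^n)$; taking Fourier transforms gives $(|\xi|^2+1)^k\widehat{w_x}=0$, and since $(|\xi|^2+1)^{-k}$ is a smooth, bounded multiplier with bounded derivatives, this yields $\widehat{w_x}=0$ directly. The Fourier route is shorter, avoids all boundary and commutator bookkeeping, and in fact does not even need $w_x\in L^2$ or the Plancherel energy identity — temperedness alone suffices. The trade-off is that it is tied to constant coefficients on $\R^n$, while the paper's cutoff-and-Fatou argument is of a form that can in principle carry over to variable coefficients or to a manifold setting (even though the paper itself proves uniqueness on $M$ by a different representation-formula argument). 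Both approaches are sound.
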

\begin{remark}
    Note that with this result, the function $\Go{}: \R^n \times \R^n \setminus \Diag \to \R$ defined in Proposition \ref{prop:GreenD1} is the unique fundamental solution of the operator $\Dg[\xi] + 1$ in $\R^n$ in the class $\clasG_1$.
\end{remark}
\begin{proof}
    Let $x\in \R^n$, and $H, \Tilde{H} \in \clasG_k$ be two fundamental solutions for $(\Dg[\xi] + 1)^k$ in $\R^n$. Define $f_x(y) := H(x,y) - \Tilde{H}(x,y)$. Then $f_x\in C^{2k}(\R^n\setminus\{x\})$ with 
    \begin{enumerate}
        \item $\displaystyle\abs{f_x(y)} \leq C \abs{x-y}^{2k-n}$ when $\abs{x-y}\leq 1$;
        \item $\displaystyle\abs{\nabla^l f_x} \leq C_p \abs{x-y}^{-p}$ for all $p$, when $\abs{x-y}\geq 1$, and for $l=0,\ldots 2k$;
        \item $f_x$ satisfies $(\Dg[\xi]+1)^k f_x = 0$ in the weak sense on $\R^n$.
    \end{enumerate}
    \par We start by proving that the singularity of $f_x$ at $x$ is removable. Note that $f_x\in L^p(\Bal{x}{1})$ for all $1\leq p < \frac{n}{n-2k}$. Elliptic theory gives that $f_x \in \Sob[2k,p](\Bal{x}{1/2})$ and there is $C>0$ independent of $x$ such that
    \begin{align*}
        \Snorm[(\Bal{x}{1/2})]{2k,p}{f_x} &\leq C \bpr{\Lnorm[(\Bal{x}{1})]{p}{f_x} + \Lnorm[(\Bal{x}{1})]{p}{(\Dg[\xi]+1)^k f_x}}\\
            &= C \Lnorm[(\Bal{x}{1})]{p}{f_x}.
    \end{align*}
    Iterating, and by elliptic theory, we similarly find that $f_x \in \Sob[l,p](\Bal{x}{1/2})$ for any $l\geq 0$. By Sobolev embeddings, using a big enough $l$ in the previous argument, then $f_x \in C^{0,\delta}(\Bal{x}{1/4})$ for some $\delta>0$. 
    We conclude that $f_x$ has no singularity at 0, and $f_x\in C^{2k}(\R^n)$ satisfies 
    \begin{equation}\label{eq:tempd1}
        (\Dg[\xi] + 1)^k f_x = 0 \qquad \text{ on $\R^n$ in the classical sense.}
    \end{equation} 
    \par Using the decay of $f_x$ at infinity, we have $f_x \in L^p(\R^n)$ for all $p \geq 1$. Fix $R>0$, and take $\chi_R$ a cutoff function supported in $\Bal{x}{R}$ such that $\chi_R \equiv 1$ on $\Bal{x}{R/2}$. We compute
    \begin{align*}
        (\Dg[\xi]+1)^k (\chi_R f_x) &= \sum_{l=0}^k \tbinom{l}{k} \bpr{\chi_R (\Dg[\xi])^l f_x + \bigO\bpr{\sum_{m=1}^{2l} \abs{\nabla^m\chi_R} \abs{\nabla^{2l-m}f_x}}} \\
            &= \bigO\bpr{\sum_{l=0}^{k} \sum_{m=1}^{2l} \abs{\nabla^m \chi_R} \abs{\nabla^{2l-m} f_x}}
    \end{align*}
    with \eqref{eq:tempd1}. Testing this equation against $\chi_R f_x \in \Cct(\R^n)$, we have by integration by parts on the left-hand side, for all $p>1$ and $R>2$,
    \begin{align*}
        \int_{\R^n} \sum_{l=0}^k \tbinom{l}{k} \abs{(\Dg[\xi])^{l/2}(\chi_R f_x)}^2 dy &\leq C\sum_{l=0}^{k} \sum_{m=1}^{2l} \int_{\R^n} \abs{\nabla^m \chi_R} \abs{\nabla^{2l-m} f_x}\chi_R \abs{f_x} \, dy \\
            &\leq C_p \int_{\Bal{x}{R} \setminus\Bal{x}{R/2}} \abs{x-y}^{-p} dy
    \end{align*}
    where the latter follows from the decay of $f_x$ at infinity. Now $\abs{x-y}^{-p} \in L^1(\R^n \setminus \Bal{x}{1})$ for $p>n$, and thus the right-hand side vanishes as $R\to \infty$ provided we choose a fixed $p>n$. On the other hand since all the terms in the left-hand side are positive,
    \begin{align*}
        \Snorm[(\R^n)]{k}{f_x} &\leq \int_{\R^n} \sum_{l=0}^k \tbinom{l}{k} \abs{\Dg[\xi]^{l/2}f_x}^2 dy \\
            &\leq \liminf_{R \to \infty} \int_{\R^n} \sum_{l=0}^k \tbinom{l}{k} \abs{\Dg[\xi]^{l/2}(\chi_R f_x)}^2 dy = 0
        \end{align*}
    by Fatou's Lemma. Thus $f =0$ everywhere in $\R^n$.
\end{proof}

\begin{remark}\label{rem:uniqLap}
    With the same strategy of proof as in the previous Lemma \ref{prop:uniqL1}, we can show the following. The function $\G{k}(x,y)$ defined in \eqref{def:GDRn} is the unique fundamental solution for the poly-Laplacian operator $\Dg[\xi]^k$ in $\R^n$ in the class of functions $u \in C^{2k}(\R^n \times \R^n \setminus \Diag)$ such that there exists $C>0$ and 
    \[  \abs{u(x,y)} \leq C \abs{x-y}^{2k-n} \quad \forall\, x\neq y.
        \]
\end{remark}

\begin{remark}\label{rem:smthGr}
    A fundamental solution $h$ for $(\Dg[\xi]+1)^k$ is always smooth away from its singularity: Let $h$ be a distribution that satisfies
    \[  (\Dg[\xi]+1)^k h_x = 0 \qquad \text{weakly on $\R^n \setminus\{x\}$}.
        \]
    Now for all $\Omega \subset \R^n$ such that $x \not\in \overline{\Omega}$, $h$ satisfies
    \[  (\Dg[\xi] + 1)^k h_x = 0 \qquad \text{weakly on $\overline{\Omega}$}
        \]
    and by elliptic theory we can conclude $h_x \in C^{\infty}(U)$ for an open set $U \sub\sub \Omega$. This gives in turn $h_x \in C^\infty(\R^n \setminus \{x\})$.
\end{remark}

We have everything we need to construct the Green's function of $(\Dg[\xi] + \a)^k$ in $\R^n$ and describe its exact behavior. Define $\Go{1} := \Go{}$ and for $k\geq 1$, iteratively
\begin{equation}\label{def:Gok}
    \Go{k+1}(x,y) := \Go{k} * \Go{1} (x,y) = \int_{\R^n} \Go{k}(x,z) \Go{1}(z,y)\, dz,
    \end{equation}
which is well-defined provided $2k+2 < n$, as easily seen by iteratively applying Giraud's Lemma (see Lemma \ref{prop:expGir} below).
\begin{theorem}\label{prop:uniqGa}
    Fix $k\geq 1$, $n> 2k$, and $\a > 0$, then 
    \[  \Ga{k}(x,y) := \a^{\frac{n-2k}{2}} \Go{k}(\sqa x, \sqa y)
        \] 
    is the unique Green's function for $(\Dg[\xi] + \a)^k$ in $\R^n$ in the class $\clasG_k$ defined in \ref{def:clasG}, where $\Go{k}$ is as defined in \eqref{def:Gok}. Moreover, there exists $C>0$ independent of $\a>0$ such that for all $x\neq y$,
    \begin{equation}\label{eq:Goest}
        \Ga{k}(x,y) \leq \begin{cases}
        C \abs{x-y}^{2k-n} & \text{when } \sqa \abs{x-y} \leq 1\\
        C \a^{k\frac{n-3}{4}} \abs{x-y}^{\frac{(k-2)n + k}{2}} e^{-\sqa \abs{x-y}} & \text{when } \sqa \abs{x-y} \geq 1
    \end{cases}.
        \end{equation}
    Finally, the Green's function is radial, $\Ga{k}(x,y)$ only depends on $\abs{x-y}$.
\end{theorem}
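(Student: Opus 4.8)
The plan is to construct $\Go{k}$ as an iterated convolution of $\Go{}$, to transfer both existence and the estimates to arbitrary $\a>0$ through the scaling identity of Remark \ref{rem:GkaGk1}, and to read off the pointwise bounds by iterating the exponentially weighted Giraud Lemma \ref{prop:expGir}.

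First I would prove, by induction on $j=1,\dots,k$, that $\Go{j}$ (as defined in \eqref{def:Gok}) is a fundamental solution of $(\Dg[\xi]+1)^j$, that it belongs to the class $\clasG_j$, and that it satisfies $\Go{j}(x,y)\le C|x-y|^{2j-n}$ for $|x-y|\le 1$ and $\Go{j}(x,y)\le C|x-y|^{\sigma_j}e^{-|x-y|}$ for $|x-y|\ge 1$, where $\sigma_1=\tfrac{1-n}{2}$ and $\sigma_j=\sigma_{j-1}+\tfrac{n+1}{2}$, so that $\sigma_k=\tfrac{(k-2)n+k}{2}$. The base case $j=1$ is Proposition \ref{prop:GreenD1} together with the asymptotics \eqref{eq:bvorGa1}--\eqref{eq:bvorGa2}, Remark \ref{rem:Go1asG} (smoothness and radiality), and Lemma \ref{prop:decL1} (decay of derivatives at infinity). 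For the inductive step, the hypothesis $n>2k$ gives $n>2(j+1)$ whenever $j\le k-1$, so $\Go{j+1}=\Go{j}*\Go{}$ is well-defined; feeding into Lemma \ref{prop:expGir} the near-diagonal bounds $|\Go{j}(x,z)|\le C|x-z|^{2j-n}$ and $|\Go{}(z,y)|\le C|z-y|^{2-n}$ (whose singularity exponents add up to strictly more than $n$) together with the exponential far bounds should produce $\Go{j+1}(x,y)\le C|x-y|^{2(j+1)-n}$ near the diagonal and $\Go{j+1}(x,y)\le C|x-y|^{\sigma_{j+1}}e^{-|x-y|}$ for $|x-y|\ge 1$. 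Since $e^{-|x-y|}$ dominates every negative power, and by Lemma \ref{prop:decL1} the same far decay passes to $\nabla^l\Go{j+1}$ for $l\le 2(j+1)$, while smoothness off the diagonal is automatic for a fundamental solution, one gets $\Go{j+1}\in\clasG_{j+1}$. That $\Go{j+1}$ is a fundamental solution of $(\Dg[\xi]+1)^{j+1}$ follows because $\Dg[\xi]+1$ has constant coefficients and the local integrability and decay just obtained justify moving the operator through the convolution: $(\Dg[\xi]+1)^{j+1}(\Go{j}*\Go{})=\big((\Dg[\xi]+1)^j\Go{j}\big)*\big((\Dg[\xi]+1)\Go{}\big)=\delta_0*\delta_0=\delta_0$. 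By translation invariance $\Go{k}$ depends only on $x-y$, and being a convolution of radial functions it depends only on $|x-y|$, which gives the radiality assertion.

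Next, Remark \ref{rem:GkaGk1} yields that $\Ga{k}(x,y)=\a^{(n-2k)/2}\Go{k}(\sqa x,\sqa y)$ is a fundamental solution of $(\Dg[\xi]+\a)^k$, radial because $\Go{k}$ is. Substituting the bounds just obtained, when $\sqa|x-y|\le 1$ one gets $\Ga{k}(x,y)\le C\a^{(n-2k)/2}(\sqa|x-y|)^{2k-n}=C|x-y|^{2k-n}$, and when $\sqa|x-y|\ge 1$ one gets $\Ga{k}(x,y)\le C\a^{(n-2k+\sigma_k)/2}|x-y|^{\sigma_k}e^{-\sqa|x-y|}$; the elementary identity $\tfrac{n-2k+\sigma_k}{2}=\tfrac{k(n-3)}{4}$ turns this into exactly \eqref{eq:Goest}. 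For uniqueness I would rerun the proof of Lemma \ref{prop:uniqL1} with $(\Dg[\xi]+1)^k$ replaced by $(\Dg[\xi]+\a)^k$: the difference of two fundamental solutions in $\clasG_k$ has a removable singularity by elliptic bootstrap, and then vanishes by the cutoff energy estimate, using that $(\Dg[\xi]+\a)^k$ remains coercive for $\a>0$; equivalently, one transfers uniqueness from Lemma \ref{prop:uniqL1} through the scaling bijection $u(x,y)\mapsto\a^{(n-2k)/2}u(\sqa x,\sqa y)$, which preserves $\clasG_k$ up to $\a$-dependent constants.

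The hard part will be the exponential regime of the induction: carrying a two-scale bound — an algebraic singularity on the diagonal together with exponential decay past distance $1$ — through the convolution forces the use of the refined Giraud Lemma \ref{prop:expGir} and a careful accounting of how the polynomial prefactor degrades at each step, namely by splitting the convolution integral into the contribution of a neighbourhood of $x$, that of a neighbourhood of $y$, and that of a tube around the segment $[x,y]$, the last being the one responsible for the growth of the exponent $\sigma_k$. A secondary technical point is justifying that the polyharmonic operator may be commuted through $\Go{k}*\Go{}$ when tested against functions in $\Cct(\R^n)$, which rests precisely on the local integrability and the decay furnished by Lemma \ref{prop:decL1}.
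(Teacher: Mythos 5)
Your proposal follows essentially the same route as the paper's proof: inductive construction of $\Go{j}$ via iterated convolution, the scaling identity of Remark \ref{rem:GkaGk1} to pass to general $\a$, iterated application of Lemma \ref{prop:expGir} for the two-regime bounds, Lemma \ref{prop:decL1} to place $\Go{k}$ in $\clasG_k$, and Lemma \ref{prop:uniqL1} for uniqueness; your bookkeeping $\sigma_1=\tfrac{1-n}{2}$, $\sigma_{j+1}=\sigma_j+\tfrac{n+1}{2}$ reproduces the paper's exponent $-l\tfrac{n-1}{2}+(l-1)n=\tfrac{(l-2)n+l}{2}$ and the $\a$-exponent $\tfrac{n-2k+\sigma_k}{2}=\tfrac{k(n-3)}{4}$ exactly. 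The only presentational difference is that the paper verifies the weak identity $\int \Go{l+1}_x(\Dg[\xi]+1)^{l+1}\phi=\phi(x)$ directly by Fubini, whereas you invoke the distributional shortcut $(\Dg[\xi]+1)^{j+1}(\Go{j}*\Go{})=\bigl((\Dg[\xi]+1)^j\Go{j}\bigr)*\bigl((\Dg[\xi]+1)\Go{}\bigr)=\delta_0$ and flag its justification; both reductions rest on the same integrability and decay input and are equivalent in substance.
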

\begin{proof}
    We begin by showing that $\Ga{k}$ is a fundamental solution for the operator $(\Dg[\xi]+\a)^k$ in $\R^n$. We then get explicit $\a$-dependent estimates for $\Ga{k}$, and deduce its uniqueness.
    \par Since $n>2k$, $\Go{l}$ is defined for all $l = 1,\ldots k$. We prove the first statement by induction. First, $\Go{1}(x,y) = \Go{}(x,y)$ is a Green's function for $\Dg[\xi]+1$ in $\R^n$ as we showed in Proposition \ref{prop:GreenD1}. Assume that we have proven that $\Go{l}$ is a Green's function for some $1\leq l \leq k-1$. Then let $\phi \in \Cct(\R^n)$, we have
    \begin{multline}\label{eq:gkisgreen}
        \int_{\R^n} \Go[x]{l+1}(y) (\Dg[\xi] + 1)^{l+1} \phi(y) dy \\
        \begin{aligned}
            &= \int_{\R^n} \bpr{\int_{\R^n} \Go{l}(x,z) \Go{1}(z,y) dz} (\Dg[\xi]+1)^{l+1} \phi(y) dy\\
            &= \int_{\R^n} \Go[x]{l}(z) \bpr{\int_{\R^n} \Go[z]{1}(y) (\Dg[\xi]+1)\bsq{(\Dg[\xi]+1)^{l} \phi}(y) dy} dz\\
            &= \int_{\R^n} \Go[x]{l}(z) (\Dg[\xi] + 1)^{l}\phi(z) dz\\
            &= \phi(x),
    \end{aligned}\end{multline}
    where the last line is the induction assumption. Now using \eqref{eq:GkaGk1}, we obtain that $\Ga{k}(x,y) = \a^{\frac{n-2k}{2}}\Go{k}(\sqa x, \sqa y)$ is a Green's function for $(\Dg[\xi]+\a)^k$ in $\R^n$.
    \par To prove pointwise estimates on $\Go{k}(x,y)$, we use an exponential version of the so-called \emph{Giraud's Lemma}, whose standard proof can be found in \cite{Gir29}. We prove this result in appendix \ref{sec:giraud} in the generalized setting of a manifold, following a similar reasoning. With the behavior of $\Go{1}$ in \eqref{eq:bvorGa1}, \eqref{eq:bvorGa2}, and Lemma \ref{prop:expGir}, we get iteratively for $l=1,\ldots k$,
    \[  \Go{l}(u,v) \leq \begin{cases}
        C \abs{u-v}^{2l - n} & \text{when } \abs{u-v} \leq 1\\
        C \abs{u-v}^{-l\frac{n-1}{2} + (l-1) n} e^{-\abs{u-v}} & \text{when } \abs{u-v} \geq 1.
    \end{cases}
        \]
    We then observe that $\Go{k} \in \clasG_k$ using Lemma \ref{prop:decL1}, it is thus the only Green's function in this class by Lemma \ref{prop:uniqL1}. Similarly, by relation \eqref{eq:GkaGk1}, we conclude that $\Ga{k}$ is the unique Green's function for $(\Dg[\xi] + \a)^k$ in $\R^n$ in the class $\clasG_k$. The previous estimates now become
    \[  \Ga{k}(x,y) \leq \begin{cases}
        C\abs{x-y}^{2k-n} & \sqa \abs{x-y} \leq 1\\
        C\a^{k \frac{n-3}{4}} \abs{x-y}^{\frac{n(k-2)+k}{2}} e^{-\sqa \abs{x-y}} & \sqa \abs{x-y}\geq 1.
    \end{cases}
        \]
    \par Finally, this Green's function inherits its symmetry from fact that $\Go{1}$ only depends on $\abs{x-y}$ and that the convolution of two radial functions is itself radial.
\end{proof}
Note that the exponent $\frac{(k-2)n + k}{2}$ in \eqref{eq:Goest} becomes positive for $k\geq 2$.

\subsection{Refined asymptotics.} We now prove more precise pointwise estimates on the Green's function $\Ga{k}$ and its derivatives. When $\sqa \abs{x-y}$ is small, we show that $\Ga{k}$ and its first $2k-1$ derivatives are equal to first order to the standard Green's function for the poly-Laplacian in $\R^n$ and its derivatives, respectively.

\begin{proposition}\label{prop:bvorDGak}
    Fix $k\geq 1$, $n>2k$ and $\a > 0$. Then $\Ga{k} \in C^\infty(\R^n \times \R^n\setminus\Diag)$, where $\Ga{k}$ is defined in Theorem \ref{prop:uniqGa}, and for all $l = 0, \ldots 2k$, there exists $C_l >0$ independent of $\a$ such that for all $x\neq y$ in $\R^n$,
    \[  \abs{\nabla^l \Ga[x]{k}(y)} \leq \begin{cases}
        C_l \abs{x-y}^{-(n-2k+l)} & \sqa \abs{x-y} \leq 1\\
        C_l\, \a^{k\frac{n-3}{4} + \frac{l}{2}} \abs{x-y}^{\frac{(k-2)n + k}{2}} e^{-\sqa \abs{x-y}} & \sqa \abs{x-y} \geq 1.
    \end{cases}
        \]
\end{proposition}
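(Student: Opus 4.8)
The plan is to reduce the whole statement to Lemma \ref{prop:decL1} applied to $\Go{k}$, and then to transport the resulting estimates to $\Ga{k}$ via the scaling relation of Theorem \ref{prop:uniqGa}, keeping careful track of the powers of $\a$.

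First I would record the qualitative facts about $\Go{k}$ needed as input. By Theorem \ref{prop:uniqGa}, $\Go{k}$ (as defined in \eqref{def:Gok}) is a fundamental solution of $(\Dg[\xi]+1)^k$, it is radial, and it satisfies $(\Dg[\xi]+1)^k \Go[x]{k} = 0$ away from the diagonal; hence by Remark \ref{rem:smthGr} we have $\Go[x]{k} \in C^\infty(\R^n\setminus\{x\})$ for every $x$. Writing $\Go{k}(u,v) = F_k(\abs{u-v})$, radiality and smoothness of $\Go[0]{k}$ along a ray give $F_k \in C^\infty((0,\infty))$, so $(u,v)\mapsto F_k(\abs{u-v})$ is smooth on $\R^n\times\R^n\setminus\Diag$; since $\Ga{k}(x,y)=\a^{\frac{n-2k}{2}}\Go{k}(\sqa x,\sqa y)$, the same holds for $\Ga{k}$, which proves the first assertion.

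Next, the pointwise bounds of Theorem \ref{prop:uniqGa} with $\a=1$ state precisely that $\Go[x]{k} \in C^{2k}(\R^n\setminus\{x\})$ solves $(\Dg[\xi]+1)^k\Go[x]{k}=0$ off $x$ and obeys $\abs{\Go[x]{k}(y)}\leq C\abs{x-y}^{2k-n}$ for $\abs{x-y}\leq 1$ and $\abs{\Go[x]{k}(y)}\leq C\abs{x-y}^{\rho}e^{-\abs{x-y}}$ for $\abs{x-y}\geq 1$, with $\rho := \tfrac{(k-2)n+k}{2}$. These are exactly the hypotheses of Lemma \ref{prop:decL1}, so for $l=0,\dots,2k$ there is $C_l>0$ with
\[
    \abs{\nabla^l \Go[x]{k}(y)} \leq C_l \begin{cases}
        \abs{x-y}^{-(n-2k+l)} & 0<\abs{x-y}\leq 1\\[1mm]
        \abs{x-y}^{\rho}\, e^{-\abs{x-y}} & \abs{x-y}\geq 1.
    \end{cases}
\]
Finally I would differentiate $\Ga{k}(x,y)=\a^{\frac{n-2k}{2}}\Go{k}(\sqa x,\sqa y)$ in $y$, which gives $\nabla^l_y\Ga[x]{k}(y)=\a^{\frac{n-2k}{2}+\frac{l}{2}}(\nabla^l\Go[\sqa x]{k})(\sqa y)$, and plug in the bound above at the points $\sqa x,\sqa y$, noting that $\sqa\abs{x-y}\leq 1$ (resp. $\geq 1$) is the same as $\abs{\sqa x-\sqa y}\leq 1$ (resp. $\geq 1$). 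In the near regime the factor $\a^{\frac{n-2k+l}{2}}$ exactly cancels the $\a$ coming from $(\sqa\abs{x-y})^{-(n-2k+l)}$, leaving $C_l\abs{x-y}^{-(n-2k+l)}$; in the far regime one is left with $\a^{\frac{n-2k}{2}+\frac{l}{2}+\frac{\rho}{2}}\abs{x-y}^{\rho}e^{-\sqa\abs{x-y}}$, and the elementary identity $\frac{n-2k}{2}+\frac{\rho}{2}=\frac{k(n-3)}{4}$ turns the exponent of $\a$ into $\frac{k(n-3)}{4}+\frac{l}{2}$, which is the claimed bound.

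There is no serious obstacle here: the only points requiring any care are verifying that $\Go{k}$ genuinely satisfies the hypotheses of Lemma \ref{prop:decL1} — immediate once one absorbs $\abs{u-v}^{\rho}e^{-\abs{u-v}}$ into $C\abs{u-v}^{2k-n}$ on $\abs{u-v}\leq 1$ — and the bookkeeping of powers of $\a$, which is mechanical because the scaling relation decouples the $\a$-dependence entirely from the qualitative elliptic estimate of Lemma \ref{prop:decL1}.
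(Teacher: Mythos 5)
Your proposal is correct and follows essentially the same route as the paper: invoke Remark \ref{rem:smthGr} for smoothness, feed the $\a=1$ bounds from Theorem \ref{prop:uniqGa} into Lemma \ref{prop:decL1} to control $\nabla^l\Go{k}$, and then transport to $\Ga{k}$ via the scaling identity, with the power-of-$\a$ bookkeeping done exactly as in the paper's proof. The only additions you make are small completeness touches (spelling out joint smoothness in $(x,y)$ via radiality, and explicitly checking the hypotheses of Lemma \ref{prop:decL1}), which the paper leaves implicit.
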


\begin{proof}
    The first part $\Ga{k} \in C^\infty(\R^n \times \R^n\setminus\Diag)$ comes from the Remark \ref{rem:smthGr}. The estimates for $l\geq 1$ are then a direct consequence of Lemma \ref{prop:decL1}. For $l=1,\ldots 2k$ we have
    \begin{align*}
        \abs{\nabla^l \Ga[x]{k}(y)} =& \,\a^{\frac{n-2k}{2}} \a^{\frac{l}{2}} \bpr{\nabla^l \Go[\sqa x]{k}}(\sqa y)\\
            \leq& \,C_l \a^{\frac{n-2k}{2}+\frac{l}{2}} \begin{cases}
                 \big(\sqa \abs{x-y}\big)^{-(n-2k+l)} & \sqrt{\a}\abs{x-y} \leq 1\\
                \big(\sqa \abs{x-y}\big)^{\frac{(k-2)n + k}{2}} e^{-\sqa\abs{x-y}} & \sqrt{\a} \abs{x-y} \geq 1
            \end{cases}\\
            &= C_l\begin{cases}
                \abs{x-y}^{-(n-2k+l)} & \sqrt{\a}\abs{x-y} \leq 1\\
                \a^{k\frac{n-3}{4} + \frac{l}{2}} \abs{x-y}^{\frac{(k-2)n + k}{2}} e^{-\sqa\abs{x-y}} & \sqrt{\a}\abs{x-y} \geq 1
            \end{cases}.
    \end{align*}
\end{proof}

We now prove precise estimates for the behavior of $\Ga{k}$, when $\sqa \abs{x-y}$ is small. To simplify the notation, define
\begin{equation}\label{def:eta}  
    \eta(t) = \begin{cases}
    t & \text{when $n=2k+1$}\\
    t^2 \big(1+\abs{\log t}\big) & \text{when $n=2k+2$}\\
    t^2 & \text{when $n\geq 2k+3$}
    \end{cases}, \qquad \text{for $0<t \leq 1$.}
\end{equation}
\begin{proposition}\label{prop:excbvorGa}
    Fix $k \geq 1$, $n>2k$, $\a >0$, and let $\Ga{k}$ be the unique Green's function in $\clasG_k$ for the operator $(\Dg[\xi]+\a)^k$ in $\R^n$. Then, when $\sqa \abs{x-y} \leq 1$,
    \[  \Ga{k}(x,y) = c_{n,k}\abs{x-y}^{2k-n}\bpr{1+\bigO\big(\eta(\sqa \abs{x-y})\big)},
        \]
    where $\eta$ is defined in \eqref{def:eta} and $c_{n,k}$ is the constant in \eqref{def:cnk}.
\end{proposition}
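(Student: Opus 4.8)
The plan is to reduce to $\a=1$ via the scaling relation \eqref{eq:GkaGk1} and then argue by induction on $k$, controlling the remainder $\Theta_k := \Go{k}-\G{k}$, where $\G{k}(u,v)=c_{n,k}\abs{u-v}^{2k-n}$ is the poly-Laplacian Green's function of \eqref{def:GDRn}. Since $\sqa\abs{x-y}\le 1$ is equivalent to $\abs{\sqa x-\sqa y}\le 1$ and $\Ga{k}(x,y)=\a^{\frac{n-2k}{2}}\Go{k}(\sqa x,\sqa y)$, it suffices to show that for $\abs{u-v}\le 1$ one has $\abs{\Theta_k(u,v)}\le C\abs{u-v}^{2k-n}\eta(\abs{u-v})$, with $\eta$ as in \eqref{def:eta} but read off the relative dimension $n-2k$. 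Along the way I will also use that for $\abs{u-v}\ge 1$ one automatically has $\abs{\Theta_k(u,v)}\le C\abs{u-v}^{2k-n}$, since $\G{k}$ decays polynomially while $\Go{k}$ decays exponentially by Theorem \ref{prop:uniqGa}.

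For the base case $k=1$ I would push the expansion already performed in the proof of Proposition \ref{prop:GreenD1} one order further, using the classical series expansions of $K_\nu$ with $\nu=\tfrac{n-2}{2}$ from \cite{AbrSteg}. If $n$ is odd then $\nu$ is a half-integer and the two power series defining $K_\nu$ give $\Go{1}(u,v)=c_{n,1}\abs{u-v}^{2-n}\bpr{1+\bigO(\abs{u-v}^2)+\bigO(\abs{u-v}^{n-2})}$, whose dominant correction is $\bigO(\abs{u-v})$ when $n=3$ and $\bigO(\abs{u-v}^2)$ when $n\ge 5$. If $n=4$, the logarithmic term of $K_1$ produces the correction $\bigO(\abs{u-v}^2\abs{\log\abs{u-v}})$; and if $n\ge 6$ is even, the finite sum in $K_\nu$ yields $\bigO(\abs{u-v}^2)$, the logarithmic contribution entering only at order $\abs{u-v}^{n-2}\log\abs{u-v}$. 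In each case this is exactly $\bigO(\abs{u-v}^{2-n}\eta(\abs{u-v}))$.

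For the inductive step from $k$ to $k+1$ (possible only if $n>2k+2$, so $n-2k\ge 3$ and $n-2\ge 3$, and then $\eta$ is the pure power $t^2$ at both scales, whence $\abs{\Theta_k(y)}\le C\abs{y}^{2(k+1)-n}$ and $\abs{\Theta_1(y)}\le C\abs{y}^{4-n}$ for $\abs{y}\le 1$) I would combine $\Go{k+1}=\Go{k}*\Go{1}$ from \eqref{def:Gok} with the identity $\G{k+1}=\G{k}*\G{1}$; the latter holds because $\Dg[\xi]^{k+1}(\G{k}*\G{1})=\Dg[\xi]^{k}(\G{k}*\Dg[\xi]\G{1})=\Dg[\xi]^{k}\G{k}=\delta_0$ while $\abs{\G{k}*\G{1}}\le C\abs{u-v}^{2(k+1)-n}$ by Lemma \ref{prop:expGir}, so it coincides with $\G{k+1}$ by the uniqueness statement of Remark \ref{rem:uniqLap}. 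Expanding $\Go{k}=\G{k}+\Theta_k$ and $\Go{1}=\G{1}+\Theta_1$ in the convolution then gives
\[  \Theta_{k+1}(u,v) = \bpr{\Theta_k*\Go{1}}(u,v)+\bpr{\G{k}*\Theta_1}(u,v), \]
both convolutions being absolutely convergent since $n>2k+2$. Feeding in the pointwise bounds (power law near the diagonal, exponential or polynomial decay at distance $\ge 1$) and invoking a Giraud-type estimate — Lemma \ref{prop:expGir}, or in the core region its classical power-law form, that $\int_{\R^n}\abs{x-z}^{-a}\abs{z-y}^{-b}\,dz$ is comparable to $\abs{x-y}^{n-a-b}$, resp. to $1+\abs{\log\abs{x-y}}$, resp. bounded, according as $a+b>n$, $a+b=n$, $a+b<n$ — applied with $a=n-2k-2$, $b=n-2$ for the first term and $a=n-2k$, $b=n-4$ for the second (both with $a+b=2n-2k-4$), produces $\abs{\Theta_{k+1}(u,v)}\le C\abs{u-v}^{2(k+1)-n}\eta(\abs{u-v})$ for $\abs{u-v}\le 1$: the cases $n-2(k+1)\ge 3$, $=2$, $=1$ are exactly $a+b>n$, $=n$, $<n$, yielding the three forms of $\eta$, while the regions at distance $\ge 1$ from the diagonal contribute only $\bigO(1)$, which on $\{\abs{u-v}\le 1\}$ is always dominated by $\abs{u-v}^{2(k+1)-n}\eta(\abs{u-v})$. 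I expect this final step to be the main obstacle: carrying out the region decomposition of the convolution carefully enough that the power of $\abs{u-v}$ and the logarithm in the critical dimension $n=2k+2$ come out sharp, and verifying that the remainder does not deteriorate across the induction.
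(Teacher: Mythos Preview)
Your approach is correct and genuinely different from the paper's. The paper does not induct on $k$: it writes $R_\a := \Ga{k} - \G{k}$, observes that
\[
\Dg[\xi]^k R_{\a,x} = -\sum_{l=0}^{k-1}\tbinom{l}{k}\,\a^{k-l}\Dg[\xi]^l\Ga[x]{k} =: h_{\a,x}
\]
in the distributional sense, bounds $h_\a$ pointwise using the derivative estimates of Proposition~\ref{prop:bvorDGak}, and then recovers $R_\a = \G{k}*h_\a$ via the uniqueness of Remark~\ref{rem:uniqLap}. A single application of Lemma~\ref{prop:expGir} then produces the three cases of $\eta$ at once, since $h_\a$ inherits exponential decay at infinity from $\Ga{k}$.

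Your route trades the derivative bounds of Proposition~\ref{prop:bvorDGak} for the convolution identity $\G{k+1}=\G{k}*\G{1}$ and explicit Bessel asymptotics at $k=1$. This is more self-contained at the base level, but in the inductive step neither $\Theta_k$ nor $\G{k}$ has exponential decay (they behave like $\abs{u-v}^{2k-n}$ at infinity), so Lemma~\ref{prop:expGir} does not apply verbatim to $\Theta_k*\Go{1}$ or $\G{k}*\Theta_1$; you correctly fall back on the classical power-law Giraud in the core region plus a separate $O(1)$ bound on the far region, which is routine since $n>2k+2$ makes the tails integrable. The paper's argument is tidier in that it handles all $k$ uniformly and keeps every convolution within the exponential framework of Lemma~\ref{prop:expGir}; yours avoids Proposition~\ref{prop:bvorDGak} entirely but at the price of the case analysis at $k=1$ and the extra bookkeeping you flag at the end.
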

\begin{remark}
    Here and in the following, the notation $f(x,y) = \bigO(u(x,y))$, for a positive function $u$, is used to mean that there is a constant $C>0$, independent of $\a$, such that for all $x,y$,
    \[  \frac{\abs{f(x,y)}}{u(x,y)} \leq C.
        \]
\end{remark}
\begin{proof}
    We begin by defining $ R_\a := \Ga{k} - \G{k}_0$, where we write $\G{k}_0(x,y) = c_{n,k}\abs{x-y}^{2k-n}$ the Green's function for $\Dg[\xi]^k$ in $\R^n$.
    Now, we compute
    \begin{equation}\label{tmp:eqh}
        \begin{aligned}
            \Dg[\xi]^k R_{\a,x} &= (\Dg[\xi]+\a)^k \Ga[x]{k} - \Dg[\xi]^k \G{k}_{0,x} -\sum_{l=0}^{k-1}\tbinom{l}{k}\, \a^{k-l}\Dg[\xi]^l \Ga[x]{k}\\
                &= -\sum_{l=0}^{k-1}\tbinom{l}{k}\, \a^{k-l}\Dg[\xi]^l \Ga[x]{k}
        \end{aligned}
    \end{equation}
    in the distributional sense on $\R^n$. Let $h_\a(x,y) := -\sum_{l=0}^{k-1}\tbinom{l}{k}\, \a^{k-l}\Dg[\xi]^l \Ga[x]{k}(y)$, straightforward computations with \eqref{eq:Goest} then show that
    \begin{equation}\label{tmp:esth}
        \abs{h_\a(x,y)} \leq C \begin{cases}
            \a \abs{x-y}^{-(n-2)} & \sqa \abs{x-y} \leq 1\\
            \a^{k\frac{n+1}{4}} \abs{x-y}^{\frac{n(k-2)+k}{2}} e^{-\sqa \abs{x-y}} & \sqa \abs{x-y} \geq 1.
        \end{cases}.
    \end{equation}
    We now claim that, for all $y\neq x$,
    \begin{equation}\label{eq:repforR}
        R_\a(x,y) = \int_{\R^n} h_\a(x,z) \,c_{n,k}\abs{y-z}^{2k-n} dz.
    \end{equation}
    This follows from the fact that first, 
    \[  \abs{R_\a(x,y)} \leq \abs{\Ga{k}(x,y)} + \abs{\G{k}_0(x,y)} \leq C \abs{x-y}^{2k-n} \quad \forall~x\neq y.
    \] 
    Moreover, the right-hand side of \eqref{eq:repforR} defines a function in $L^1_{loc}(\R^n)$,
    \[  Z(x,y) := \int_{\R^n} h_{\a,x}(z)\, c_{n,k} \abs{y-z}^{2k-n} dz,
        \]
    which satisfies $\Dg[\xi]^k Z_x = h_{\a,x}$ in the distributional sense on $\R^n$. By Remark \ref{rem:uniqLap}, we conclude that for all $x\neq y$ in $\R^n$, $R_\a(x,y) = Z(x,y)$.
    We now have, by Lemma \ref{prop:expGir} together with \eqref{tmp:esth}, when $\sqa \abs{x-y} \leq 1$,
    \[  \abs{R_\a(x,y)} \leq \begin{bigcases}
        &C \a \abs{x-y}^{-(n-2k - 2)} & & \text{when $2k +2 < n$}\\
        &C \a \big(1+ \abs{\log \sqa \abs{x-y}}\big) & &\text{when $2k + 2 = n$}\\
        &C \a^{\frac{1}{2}} & &\text{when $2k + 1 = n$}
    \end{bigcases},
        \]
    where the constant $C>0$ does not depend on $\a > 0$. Finally, coming back to $\Ga{k} = \G{k}_0 + R_\a$, we have the conclusion.
\end{proof}

We obtain similar pointwise estimates for the derivatives of $\Ga{k}$. The next Corollary shows that the estimates in Proposition \ref{prop:excbvorGa} can be differentiated. 
\begin{corollary}\label{prop:betbvorDGak}
    Fix $k \geq 1$, $n>2k$, $\a >0$, and let $\Ga{k}$ the unique Green's function in $\clasG_k$ for the operator $(\Dg[\xi]+\a)^k$ in $\R^n$. For $l=1, \ldots 2k-1$, there exists $C_l > 0$ independent of $\a$ such that for all $x\neq y$ with $\sqa \abs{x-y} \leq 1$,
    \[
        \abs{\nabla^l \bpr{\abs{x-y}^{n-2k}\, \Ga[x]{k}(y)}} \leq C_l \abs{x-y}^{-l} \eta(\sqa\abs{x-y}),
        \]
    where $\eta$ is defined in \eqref{def:eta}. 
\end{corollary}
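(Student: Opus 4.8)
The plan is to combine the integral representation $R_\a(x,y) = \int_{\R^n} h_\a(x,z)\, c_{n,k} \abs{y-z}^{2k-n}\, dz$ established in the proof of Proposition \ref{prop:excbvorGa} with a differentiated version of Giraud's Lemma. Writing $\Ga{k} = \G{k}_0 + R_\a$, and noting $\abs{x-y}^{n-2k} \G{k}_{0}(x,y) = c_{n,k}$ is constant, the quantity to estimate is $\nabla^l\bpr{\abs{x-y}^{n-2k} R_\a(x,y)}$. I would first handle this by a scaling reduction, exactly as in Remark \ref{rem:GkaGk1} and Lemma \ref{prop:decL1}: set $v_\a(x,y) := \abs{x-y}^{n-2k} R_\a(x,y)$, rescale to unit distance via $x \mapsto \sqa x$, and use that $R_\a(x,y) = \a^{(n-2k)/2} R_1(\sqa x, \sqa y)$ (which follows from the scaling \eqref{eq:GkaGk1} applied to both $\Ga{k}$ and $\G{k}_0$), so that it suffices to prove the estimate for $\a = 1$ on the annulus $\abs{x-y}\leq 1$ and then track the powers of $\a$ through the rescaling, just as at the end of the proof of Proposition \ref{prop:bvorDGak}.

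Next I would obtain the pointwise bound on $R_\a$ and its derivatives directly. Since $R_{\a,x}$ satisfies $\Dg[\xi]^k R_{\a,x} = h_{\a,x}$ classically away from $x$, and since by \eqref{tmp:esth} we have $\abs{h_\a(x,y)} \leq C\a \abs{x-y}^{-(n-2)}$ for $\sqa\abs{x-y}\leq 1$, I would apply interior elliptic estimates (Schauder or $L^p$, as in Lemma \ref{prop:decL1}) on balls $\Bal{y}{\abs{x-y}/4}$ that avoid the singularity at $x$. On such a ball both $R_{\a,x}$ and the right-hand side $h_{\a,x}$ are controlled — $R_{\a,x}$ by the bound $\abs{R_\a(x,y)} \leq C\abs{x-y}^{2k-n}\eta(\sqa\abs{x-y})$ from Proposition \ref{prop:excbvorGa}, and $h_{\a,x}$ by \eqref{tmp:esth} — so rescaling the ball to unit size and applying the elliptic estimate gives, for $l = 1, \ldots, 2k-1$,
\[
    \abs{\nabla^l R_\a(x,y)} \leq C_l\, \abs{x-y}^{-l}\, \abs{x-y}^{2k-n}\, \eta(\sqa\abs{x-y}),
\]
where the extra $\abs{x-y}^{-l}$ is the scaling factor of the $l$-th derivative and one checks that the $h_{\a,x}$ contribution, after rescaling, is dominated by the same right-hand side (here one uses that $\a\abs{x-y}^2 \eta(\sqa\abs{x-y})^{-1}$ is bounded for $\sqa\abs{x-y}\leq 1$, directly from the definition \eqref{def:eta}, with the logarithmic case $n=2k+2$ absorbed into $\eta$). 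Finally, applying the Leibniz rule to $\nabla^l\bpr{\abs{x-y}^{n-2k} R_\a(x,y)}$, every term $\nabla^j(\abs{x-y}^{n-2k}) \nabla^{l-j} R_\a$ is bounded by $\abs{x-y}^{n-2k-j}\cdot \abs{x-y}^{-(l-j)}\abs{x-y}^{2k-n}\eta(\sqa\abs{x-y}) = \abs{x-y}^{-l}\eta(\sqa\abs{x-y})$, which is the claimed bound.

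The main obstacle I anticipate is the bookkeeping of the powers of $\a$ and of $\abs{x-y}$ through the two rescalings (the one defining $\Ga{k}$ from $\Go{k}$, and the local elliptic-estimate rescaling on the ball $\Bal{y}{\abs{x-y}/4}$), while simultaneously verifying that the contribution of the source term $h_\a$ never beats $\abs{x-y}^{-l}\eta(\sqa\abs{x-y})$ — in particular checking the borderline dimension $n = 2k+2$, where the logarithm appears and one must be careful that $\log(\sqa\abs{x-y})$ stays comparable to $\eta(\sqa\abs{x-y})/(\sqa\abs{x-y})^2$ across the whole range $0 < \sqa\abs{x-y}\leq 1$. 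A secondary point requiring a little care is that the estimate of Proposition \ref{prop:excbvorGa} for $R_\a$ itself must be applied not just at the center $y$ but uniformly on the ball $\Bal{y}{\abs{x-y}/4}$, which is fine since all such points $z$ satisfy $\abs{x-z}$ comparable to $\abs{x-y}$ and $\sqa\abs{x-z}\leq C$, but the monotonicity properties of $\eta$ (increasing on $(0,1]$, up to a constant, including the $t^2\abs{\log t}$ case for small $t$) should be noted explicitly.
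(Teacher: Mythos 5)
Your proposal is correct, but it takes a genuinely different route from the paper's. The paper differentiates under the integral sign in the representation $R_\a(x,y) = \int_{\R^n} h_\a(x,z)\,c_{n,k}\abs{y-z}^{2k-n}\,dz$ and then applies Giraud's Lemma (Lemma \ref{prop:expGir}) to the convolution $\int h_\a(x,z)\,\nabla^l_{(y)}\G{k}_0(y,z)\,dz$; you instead exploit the PDE $\Dg[\xi]^k R_{\a,x} = h_{\a,x}$ directly, applying scaled interior elliptic estimates on annular balls $\Bal{y}{\abs{x-y}/4}$ that avoid the singularity, exactly in the spirit of Lemma \ref{prop:decL1}. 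Both routes close with the same Leibniz-rule computation. The paper's Giraud-based route delivers slightly sharper intermediate bounds on $\nabla^l R_\a$ (for instance $C\a\abs{x-y}^{-(n-2k-2+l)}$ without a logarithm when $n\geq 2k+2$), while your elliptic-regularity route yields the weaker but uniform bound $C\abs{x-y}^{2k-n-l}\eta(\sqa\abs{x-y})$ inheriting the logarithm from the zeroth-order estimate on $R_\a$; this is still sufficient, because the worst Leibniz term in both proofs is the $j=l$ term $\nabla^l(\abs{x-y}^{n-2k})\cdot R_\a$, which already carries that same factor of $\eta$. Your approach avoids the explicit case-splitting inherent in iterating Giraud's Lemma and reuses the rescaling machinery of Lemma \ref{prop:decL1}, at essentially no cost. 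The two points you flag — that $(\sqa\abs{x-y})^2 \leq C\,\eta(\sqa\abs{x-y})$ uniformly on $\sqa\abs{x-y}\leq 1$ in all three dimensional regimes, and that the bound of Proposition \ref{prop:excbvorGa} extends uniformly (up to adjusting constants) from the center $y$ to the whole ball via comparability of $\abs{x-z}$ with $\abs{x-y}$ and the quasi-monotonicity of $\eta$ — are exactly the verifications needed, and you handle them correctly.
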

\begin{proof}
    With notations from the previous proof of Proposition \ref{prop:excbvorGa}, we have
    \[  \abs{x-y}^{n-2k} \Ga[x]{k}(y) = c_{n,k} + \abs{x-y}^{n-2k} R_{\a,x}(y),
        \]
    and thus for $l\geq 1$,
    \begin{equation}\label{tmp:derR}
        \abs{\nabla^l \bpr{\abs{x-y}^{n-2k}\, \Ga[x]{k}(y)}} = \abs{\nabla^l \bpr{\abs{x-y}^{n-2k}\, R_{\a,x}(y)}}.
    \end{equation}
    To estimate the derivatives of $R_\a$, we go back to \eqref{eq:repforR}: Since $l\leq 2k-1$, we can differentiate under the integral sign, and we obtain
    \[  \nabla^l R_{\a,x}(y) = \int_{\R^n} h_\a(x,z) \nabla^l_{(y)} \G{k}_0(y,z)\, dz.
        \]
    By Proposition \ref{prop:bvorDGak}, and since $l<2k$, we can use as before Lemma \ref{prop:expGir} to estimate the derivatives of $R_{\a,x}$ when $\abs{x-y} \leq 1/\sqa $ :
    \begin{itemize}
        \item When $n-2k = 1$, we have to consider several cases. For $l = 1$ we get
        \[  \abs{\nabla_{(y)} R_{\a,x}(y)} \leq C \a \big(1+\abs{\log\sqa\abs{x-y}}\big),
        \]
        and for $2\leq l \leq 2k-1$,
        \[  \abs{\nabla^l_{(y)} R_{\a,x}(y)} \leq C \a \abs{x-y}^{1-l}.
        \]
        \item When $n-2k \geq 2$, we obtain for all $1\leq l\leq 2k-1$,
        \[  \abs{\nabla^l_{(y)} R_{\a,x}(y)} \leq C \a \abs{x-y}^{-(n-2k-2+l)}.
        \]
    \end{itemize}
    By \eqref{tmp:derR} and by Leibniz's formula, we now have, for $l=1,\ldots 2k-1$
    \begin{multline*}
        \abs{\nabla^l \bpr{\abs{x-y}^{n-2k}\, \Ga[x]{k}(y)}} \leq C\sum_{m=0}^l \abs{x-y}^{n-2k-m} \abs{\nabla^{l-m}R_{\a,x}(y)}\\
        \leq \begin{bigcases}
            &C\sqa \abs{x-y}^{1-l} & &\text{when $n-2k =1$}\\
            &C\a \abs{x-y}^{2-l} \big(1+ \abs{\log\sqa\abs{x-y}}\big) & &\text{when $n-2k=2$}\\
            &C \a \abs{x-y}^{2-l} & &\text{when $n-2k\geq 3$}
        \end{bigcases}.
    \end{multline*}
\end{proof}

\section{Extending the construction to a Riemannian manifold}\label{sec:riem}
In this Section we construct the Green's function for $(\Dg+\a)^k$ on a manifold $M$. We follow the construction from Robert \cite{Rob10}.
We prove uniqueness, positivity, as well as estimates that explicitly depend on $\a$.
In the following, we will always consider $(M,g)$ to be a compact Riemannian manifold without boundary, of dimension $n>2k$ and with injectivity radius $\inj>0$. We let $\Dg := -\div_g(\nabla\cdot)$ be the Laplace-Beltrami operator in $M$. The notation $\Bal{x}{R}$ will represent a ball of radius $R>0$ and center $x$ either in $\R^n$ or on the manifold, depending on the context.
\par Theorem \ref{prop:bvorGga} is proved in several steps. 
We first define an approximate fundamental solution for $(\Dg+\a)^k$ in $M$ which is modelled on the Euclidean fundamental solution of $(\Dg[\xi] + \a)^k$. It satisfies the equation $(\Dg+\a)^k G_x = \delta_x$ up to error terms. We then iteratively improve the precision of these terms until we obtain a bounded error, which is finally controlled in subsection \ref{sec:remterm}.
Subsequently, we prove bounds on the derivatives of the Green's function of the same kind as Proposition \ref{prop:bvorDGak} and Corollary \ref{prop:betbvorDGak}. We finish this section with a remark on the mass of the operator $(\Dg+\a)^k$ when the dimension of the manifold $n=2k+1$.
\par We start with an observation.
\begin{lemma}\label{prop:uniqpos}
    Let $\a > 0$, if $G$ is a Green's function for the operator $(\Dg+\a)^k$ in $M$, as defined in Definition \ref{def:Green}, then it is unique.
\end{lemma}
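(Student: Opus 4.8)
The plan is the standard argument that a coercive elliptic operator admits \emph{at most one} fundamental solution. Let $G$ and $\tilde G$ be two Green's functions for $(\Dg+\a)^k$ in $M$ in the sense of Definition \ref{def:Green}, fix $x\in M$, and set $f_x := G_x - \tilde G_x$. Both $G_x$ and $\tilde G_x$ lie in $L^1(M)$ by hypothesis, so $f_x \in L^1(M)$; subtracting the two defining identities yields $\intM{f_x\,(\Dg+\a)^k\phi} = 0$ for every $\phi \in C^\infty(M)$ (and on a closed manifold $C^\infty(M)$ is the full space of test functions). Since $(\Dg+\a)^k$ is formally self-adjoint, this says precisely that $(\Dg+\a)^k f_x = 0$ in the distributional sense on \emph{all} of $M$: the common Dirac mass at $x$ has cancelled, so there is no longer any singularity to worry about.

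Next I would upgrade the regularity of $f_x$. The operator $(\Dg+\a)^k$ is elliptic of order $2k$ with smooth coefficients on the closed manifold $M$, hence hypoelliptic, so the distributional solution $f_x$ is in fact $C^\infty(M)$. Equivalently, one bootstraps with the same interior $L^p$ and Schauder estimates already used for equation \eqref{tmpdgu}: starting from $f_x \in L^1(M) \subset L^p(M)$ with $p$ close to $1$, one applies the elliptic estimates to $(\Dg+\a)^k f_x = 0$ on a finite cover of $M$ by small balls and iterates until Sobolev embedding puts $f_x$ in $C^{2k}$, and then in $C^\infty$.

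With $f_x$ smooth and $\partial M = \emptyset$, I would then run the energy identity, pairing the equation with $f_x$ and integrating by parts freely. Expanding $(\Dg+\a)^k = \sum_{l=0}^k \tbinom{l}{k}\a^{k-l}\Dg^{l}$ and using that $\Dg = -\div_g(\nabla\cdot)$ is self-adjoint on $(M,g)$,
\[
    0 \;=\; \dprod{(\Dg+\a)^k f_x}{f_x}_{\Sob[-k],\Sob[k]} \;=\; \sum_{l=0}^{k} \tbinom{l}{k}\,\a^{k-l}\intM{\abs{\Dg^{l/2} f_x}^2} \;\geq\; \a^{k}\intM{f_x^2}.
\]
Every summand is nonnegative and $\a>0$, so $\intM{f_x^2}=0$, i.e.\ $f_x \equiv 0$ on $M$; alternatively one simply invokes the coercivity inequality from the introduction, $\Snorm[(M)]{k}{f_x}^2 \leq \max(1,\a^{-k})\,\dprod{(\Dg+\a)^k f_x}{f_x}_{\Sob[-k],\Sob[k]} = 0$. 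Since $x\in M$ was arbitrary, $G = \tilde G$, which is the claim. I do not expect a genuine obstacle here: the only point that needs care is justifying that the a priori merely $L^1$ function $f_x$ is regular enough for the integration by parts, and that is exactly what the regularity step above supplies, so that is the step I would write out in the most detail.
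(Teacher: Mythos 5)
Your argument is correct, but it takes a genuinely different route from the paper's. The paper's proof is a pure duality argument: for an arbitrary $\phi\in C^\infty(M)$, let $u\in C^\infty(M)$ be the unique solution of $(\Dg+\a)^k u = \phi$; applying the defining identity of the Green's function with test function $u$ gives $u(x) = \intM{G_x\,\phi}$ and likewise $u(x) = \intM{\tilde G_x\,\phi}$, so $\intM{(G_x-\tilde G_x)\,\phi} = 0$ for every smooth $\phi$, whence $G_x = \tilde G_x$ a.e.\ and then everywhere by the smoothness off the diagonal recorded in Remark~\ref{rem:smthGr}. Your route instead passes the subtraction \emph{through} the operator: $f_x := G_x - \tilde G_x$ solves $(\Dg+\a)^k f_x = 0$ distributionally on all of $M$ (the two Diracs cancel), hypoellipticity promotes $f_x$ to $C^\infty(M)$, and coercivity then kills it via the energy identity. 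Both proofs ultimately lean on the same coercivity fact — the paper invokes it through the existence-and-uniqueness of solutions to \eqref{tmpdgu}, you invoke it directly as an inequality — and your argument closely mirrors the structure of the Euclidean uniqueness proof in Lemma~\ref{prop:uniqL1}, while the paper's compact-manifold argument is shorter because there is no issue of decay at infinity to control. One small slip in your write-up: on a finite-measure space $L^1(M)\subset L^p(M)$ is false for $p>1$ (the inclusions go the other way), so the $L^p$-bootstrap as stated doesn't start; but this is harmless, since your primary citation of hypoellipticity of the elliptic operator with smooth coefficients already gives $f_x\in C^\infty(M)$ in one step, which is all the subsequent integration by parts needs.
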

\begin{proof}
    Start by noting that, by the same arguments as in Remark \ref{rem:smthGr}, any Green's function for $(\Dg+\a)^k$ is smooth away from its singularity, so that $G_x \in C^\infty(M\setminus\{x\})$ for any $x\in M$.
    \par Let $\Tilde{G}$ be another Green's function for $(\Dg+\a)^k$ in $M$. Take $\phi \in C^\infty(M)$, and define $u \in C^\infty$ the unique solution to the equation
    \[  (\Dg+ \a)^k u = \phi \qquad \text{on $M$.}
        \]
    We then have, for all $x\in M$,
    \[  \begin{bigcases}
        u(x) &= \intM{G(x,y) \phi(y)}(y)\\
        u(x) &= \intM{\Tilde{G}(x,y) \phi(y)}(y)
    \end{bigcases}.
        \]
    For all $x\in M$, $\phi\in C^\infty(M)$, we have obtained
    \[  \intM{\bpr{G(x,y) - \Tilde{G}(x,y)}\phi(x)} = u(x) - u(x) = 0.
        \]
    Thus, we can conclude that $G_x(y) = \Tilde{G}_x(y)$ for almost every $y\in M$, and by continuity $G_x(y)= \Tilde{G}_x(y)$ on $M\setminus\{x\}$.
\end{proof}

\subsection{Step 1: An approximate fundamental solution.}
We start the proof of Theorem \ref{prop:bvorGga} by pulling back the function $\Ga{k}$ onto the manifold. Fix $k\geq 1$, $n>2k$, and let $0< \tau_0 < \inj/2$ that will be chosen later. Let $\chi \in C^\infty(\R)$ be a cut-off function with $0 \leq \chi \leq 1$, such that $\chi(t) \equiv 1$ on $[0,\tau_0/2)$ and $\chi(t)\equiv 0$ on $(\tau_0, +\infty)$. We define
\begin{equation}\label{def:Gg}
    \Gg(x,y) := \chi(\dg{x,y}) \Ga{k}(0, \exp^{-1}_x(y))
\end{equation}
for all $x \neq y$ in $M$, where $\Ga{k}$ is defined in Theorem \ref{prop:uniqGa}. This function only depends on $\dg{x,y}$.
\par Assume that $\a$ is large enough so that $1/\sqa < \tau_0/2$. Using Theorem \ref{prop:uniqGa} we obtain
\begin{equation}\label{eq:estGg}
    \Gg(x,y) \leq \begin{cases}
    C \dg{x,y}^{-(n-2k)} & \sqa \dg{x,y} \leq 1\\
    C\a^{k \frac{n-3}{4}}\dg{x,y}^{\frac{n(k-2)+k}{2}} e^{-\sqa \dg{x,y}} & \sqa \dg{x,y} \geq 1
\end{cases}.
    \end{equation}
In particular, when $\sqa \dg{x,y} \to 0$, we have that
\begin{equation}\label{eq:decompGg}
    \Gg(x,y) = c_{n,k}\dg{x,y}^{2k-n}\bpr{1+\bigO\big(\eta(\sqa\dg{x,y})\big)},
\end{equation}
this function behaves to first order as a Riemannian version of the Green's function for the poly-Laplacian $\Dg[\xi]^k$ in $\R^n$, where $\eta$ is defined in \eqref{def:eta}.
\par The following Proposition estimates the error term between $\Gg$ and a true fundamental solution in the distributional sense. 
\begin{proposition}\label{prop:deflx}
    Let $\tau_0 < \inj/2$ and let $\Gg$ be as defined in \eqref{def:Gg}. There exist $\a_0\geq 1$ and $C>0$ such that, for all $\a \geq \a_0$ and $x\in M$, there is $l_{\a,x} \in C^0(M\setminus \{x\})$ satisfying
    \begin{equation}\label{eq:Galx}
        \intM{(\Dg+\a)^k \phi \, \Gg[x]} = \phi(x) + \intM{\phi(y) \, l_{\a,x}(y)}(y),
        \end{equation}
    for all $\phi \in C^{\infty}(M)$. The function $l_{\a,x}$ is $L^1(M)$, has support in $\Bal{x}{\tau_0}$, and
    \[  \abs{l_{\a,x}(y)} \leq \begin{cases}
        C \dg{x,y}^{-(n-2)} & \sqa \dg{x,y} \leq 1\\
        C \a^{k\frac{n+1}{2}}\dg{x,y}^{\frac{n(k-2)+k+4}{2}}e^{-\sqa \dg{x,y}} & \sqa \dg{x,y} \geq 1
    \end{cases}
        \]
    for all $x\neq y$ in $M$.
\end{proposition}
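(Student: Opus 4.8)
The plan is to define $l_{\a,x}$ directly as the classical value of $(\Dg+\a)^k\Gg[x]$ on $M\setminus\{x\}$, to verify the distributional identity \eqref{eq:Galx} by applying a Green-type formula on $M\setminus\Bal{x}{\delta}$ and letting $\delta\to 0$, and to obtain the pointwise estimate by comparing $(\Dg+\a)^k$ with the Euclidean operator $(\Dg[\xi]+\a)^k$ in normal coordinates centered at $x$. First I would set $\a_0:=\max\bbr{1,4\tau_0^{-2}}$, so that for $\a\geq\a_0$ one has $1/\sqa<\tau_0/2$; in particular every $y$ with $\sqa\dg{x,y}\leq 1$ satisfies $\dg{x,y}<\tau_0/2$, where the cut-off $\chi$ in \eqref{def:Gg} equals $1$. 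Since $\Ga{k}(0,\cdot)\in C^\infty(\R^n\setminus\{0\})$ (Proposition \ref{prop:bvorDGak}) and $\exp^{-1}_x$ is a diffeomorphism on $\Bal{x}{\inj}$, the function $\Gg[x]$ is smooth on $\Bal{x}{\tau_0}\setminus\{x\}$, vanishes outside $\Bal{x}{\tau_0}$, and lies in $L^1(M)$ by \eqref{eq:estGg} and $n>2k$; one then sets $l_{\a,x}:=(\Dg+\a)^k\Gg[x]\in C^\infty(M\setminus\{x\})$, which is supported in $\Bal{x}{\tau_0}$.

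For the pointwise bound, note that on $\Bal{x}{\tau_0/2}\setminus\{x\}$ the cut-off is $1$, so in normal coordinates at $x$ the function $\Gg[x]$ is the restriction of $z\mapsto\Ga{k}(0,z)$ to $\Bal{0}{\tau_0/2}$, while $(\Dg[\xi]+\a)^k\Ga{k}(0,\cdot)=0$ on $\R^n\setminus\{0\}$ by Theorem \ref{prop:uniqGa}. Writing $\Dg=\Dg[\xi]+L_x$ in these coordinates, where $L_x$ is a second-order operator whose principal coefficients and all their derivatives are $\bigO\bpr{\dg{x,\cdot}^2}$ and whose first-order coefficients are $\bigO\bpr{\dg{x,\cdot}}$, uniformly in $x$ by compactness of $M$, we get that $l_{\a,x}=\bpr{(\Dg+\a)^k-(\Dg[\xi]+\a)^k}\Ga{k}(0,\cdot)$ on $\Bal{x}{\tau_0/2}\setminus\{x\}$ is a finite sum of terms $\a^{k-l}$ times a composition of $l$ operators, each equal to $\Dg[\xi]$ or $L_x$, with at least one factor $L_x$. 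Using the derivative estimates $\abs{\nabla^m\Ga{k}(0,\cdot)}\leq C_m\abs{\cdot}^{2k-n-m}$ for $\sqa\abs{\cdot}\leq 1$ from Proposition \ref{prop:bvorDGak}, together with the fact that each factor $L_x$ does not increase the order of the singularity while each factor $\Dg[\xi]$ raises it by at most $2$, such a term is $\bigO\bpr{\a^{k-l}\dg{x,\cdot}^{\,2(k-l)+2-n}}$ at worst; since $\a\,\dg{x,\cdot}^2\leq 1$ in this regime, this is $\bigO\bpr{\dg{x,\cdot}^{2-n}}$, which gives the first line of the claimed bound. On the remaining set $1/\sqa\leq\dg{x,\cdot}<\tau_0$, where derivatives of $\chi$ also contribute through Leibniz' rule but $\sqa\dg{x,\cdot}\geq 1$, the second line follows by inserting the exponential-regime estimates of Proposition \ref{prop:bvorDGak} for $\Ga{k}$ and its derivatives. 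In particular $l_{\a,x}\in L^1(M)$, since $\dg{x,\cdot}^{2-n}$ is integrable near $x$.

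To prove \eqref{eq:Galx} I would fix $\phi\in C^\infty(M)$ and apply the Green identity for $(\Dg+\a)^k$ on $M\setminus\Bal{x}{\delta}$, which yields
\[
  \intM[M\setminus\Bal{x}{\delta}]{\bpr{\Gg[x]\,(\Dg+\a)^k\phi-\phi\,(\Dg+\a)^k\Gg[x]}}=\int_{\partial\Bal{x}{\delta}}B_\delta\bpr{\Gg[x],\phi}\,d\sigma_g,
\]
with $B_\delta$ bilinear in the derivatives of $\Gg[x]$ and of $\phi$ up to total order $2k-1$. By \eqref{eq:decompGg} and Proposition \ref{prop:bvorDGak} pulled back via \eqref{def:Gg}, one has $\abs{\nabla^m\Gg[x](y)}\leq C\,\dg{x,y}^{2k-n-m}$ near $x$, so a boundary term carrying $a$ derivatives of $\Gg[x]$ is $\bigO(\delta^{2k-n-a})\cdot\bigO(\delta^{n-1})=\bigO(\delta^{\,2k-1-a})$, which tends to $0$ unless $a=2k-1$; the unique surviving term then carries no derivative of $\phi$, and its limit is $\phi(x)$ because, to leading order near $x$, the metric is Euclidean and the principal part of $(\Dg+\a)^k$ is $\Dg[\xi]^k$, for which $c_{n,k}\abs{\cdot}^{2k-n}=\G{k}$ is, by \eqref{def:cnk}--\eqref{def:GDRn}, exactly the fundamental solution normalized so that this limit equals $\phi(x)$ — the order-$k$ analogue of the computation \eqref{eq:abcexpr}. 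Letting $\delta\to 0$, and using dominated convergence on the left together with $\Gg[x],l_{\a,x}\in L^1(M)$, gives \eqref{eq:Galx}.

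The main obstacle is the second paragraph: the careful bookkeeping that the vanishing of the coefficients of $L_x$ exactly compensates the singularity of $\Ga{k}(0,\cdot)$ and of its derivatives, so that the error $l_{\a,x}$ is only $\bigO\bpr{\dg{x,\cdot}^{2-n}}$ rather than as singular as $\dg{x,\cdot}^{-n}$; and, in the terms of order $l<k$, using the constraint $\a\,\dg{x,\cdot}^2\leq 1$ to absorb the positive powers of $\a$. Both must be carried out uniformly in $x\in M$ — which is where the compactness of $M$ enters, through uniform control of the metric in normal coordinates — and uniformly in $\a\geq\a_0$.
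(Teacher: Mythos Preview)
Your proposal is correct and follows essentially the same route as the paper: define $l_{\a,x}:=(\Dg+\a)^k\Gg[x]$ pointwise on $M\setminus\{x\}$, establish \eqref{eq:Galx} by integrating by parts on $M\setminus\Bal{x}{\delta}$ and tracking the boundary terms as $\delta\to 0$ (only the one carrying $2k-1$ derivatives of $\Gg[x]$ survives and yields $\phi(x)$ via \eqref{eq:decompGg} and the normalization of $c_{n,k}$), and obtain the pointwise bound on $l_{\a,x}$ by comparing $\Dg$ with $\Dg[\xi]$ in normal coordinates using Proposition~\ref{prop:bvorDGak}. One small correction: in normal coordinates the \emph{derivatives} of the principal coefficients of $L_x$ are not $\bigO(\dg{x,\cdot}^2)$ but lose one power of $\dg{x,\cdot}$ per derivative; nevertheless your heuristic ``each $L_x$ preserves the order of the singularity, each $\Dg[\xi]$ raises it by at most $2$'' remains valid for compositions, because whenever a derivative from a later $\Dg[\xi]$ falls on a coefficient of $L_x$ the loss of a power of $\dg{x,\cdot}$ is exactly offset by one fewer derivative landing on $\Ga{k}(0,\cdot)$ --- this is precisely the bookkeeping the paper packages into the iterated use of \eqref{eq:errDgD}.
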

\begin{proof}
    We compute $(\Dg+\a)^k \Gg[x](y)$ and precisely estimate the error terms. Let $\Tilde{g} := \exp_x^* g$, it is a metric on $\Bal{0}{\tau_0} \subset \R^n$ with bounded geometry since $\tau_0 < \inj$. In particular, for $f \in C^2(\R^n)$,
    \begin{equation}\label{eq:errDgD}
        \Dg[\Tilde{g}]f(u) = \Dg[\xi]f(u) + \bigO(\abs{u}\abs{\nabla f(u)}) + \bigO(\abs{u}^2\abs{\nabla^2 f(u)}),
    \end{equation}
    where $u := \exp_x^{-1}(y) \in \Bal{0}{\tau_0}$. Since $\Gg(x,y) = \chi(\dg{x,y}) \Ga{k}(0,\exp^{-1}_x(y))$ is supported in $\Bal{x}{\tau_0}$, we can write, when $x\neq y$ with $\dg{x,y} \leq \tau_0$,
    \begin{equation*}
        \Dg \Gg[x](y) 
            = \Dg[\Tilde{g}]\bpr{\Ga[0]{k}(u)\chi(\abs{u})}\at{u = \exp_x^{-1}(y)}.
        \end{equation*}
        We observe that $\bpr{\Ga[0]{k}\,\chi(\abs{\cdot})} \in C^\infty(\R^n\setminus\{0\})$ is supported in $\Bal{0}{\tau_0}$. Now for $u \neq 0$ such that $\abs{u} < \tau_0/2$, using Proposition \ref{prop:bvorDGak}, we have
    \begin{align*}  
        \abs{\nabla^l \bpr{\chi(\abs{u})\Ga[0]{k}(u)}} &= \abs{\nabla^l \Ga[0]{k}(u)} \\
        &\leq C \begin{cases}
            \abs{u}^{-(n-2k+l)} & \sqa \abs{u} \leq 1\\
            \a^{k\frac{n-3}{4}+\frac{l}{2}} \abs{u}^{\frac{n(k-2)+k}{2}}e^{-\sqa \abs{u}} & \sqa \abs{u} \geq 1
        \end{cases}
        \end{align*}
    for $l=0, \ldots 2k$. Take $\a_0$ such that $1/\sqrt{\a_0} <\tau_0/2$, then we have $\sqa \abs{u} \geq 1$ for all $\abs{u}\geq \tau_0/2$, so that
    \begin{align*}
        \abs{\nabla^l \bpr{\chi(\abs{u})\Ga[0]{k}(u)}} &\leq \sum_{m=0}^{l}\abs{\nabla^{l-m}\chi}\abs{\nabla^m \Ga[0]{k}} \\
            &\leq C \a^{k\frac{n-3}{4} + \frac{l}{2}} \abs{u}^{\frac{n(k-2)+k}{2}} e^{-\sqa \abs{u}}.
    \end{align*}
    We have thus 
    \begin{equation}\label{eq:estGg0}
        \abs{\nabla^l \bpr{\chi(\abs{u}) \Ga[0]{k}(u)}} \leq C\begin{cases}
            \abs{u}^{-(n-2k+l)} & \sqa \abs{u} \leq 1\\
            \a^{k\frac{n-3}{4} + \frac{l}{2}} \abs{u}^{\frac{n(k-2)+k}{2}} e^{-\sqa \abs{u}} & \sqa \abs{u} \geq 1\\
            0 & \abs{u} > \tau_0
        \end{cases}.
    \end{equation}
    \par We now show that, for $\phi \in C^{\infty}(M)$, 
    \begin{equation}\label{tmp:explx}
        \intM{(\Dg+\a)^k \phi\, \Gg[x]} = \phi(x) + \lim_{\delta \to 0} \intM[M\setminus\Bal{x}{\delta}]{\phi\,(\Dg+ \a)^k \Gg[x]}.
    \end{equation}
    Start by observing that since $\Gg[x] \in L^1(M)$,
    \begin{align*}  
        \intM{(\Dg+\a)^k \phi\, \Gg[x]} &= \lim_{\delta\to 0} \intM[M\setminus\Bal{x}{\delta}]{(\Dg + \a)^k \phi\, \Gg[x]}\\
            &= \lim_{\delta\to 0}\bsq{\sum_{l=0}^{k} \tbinom{l}{k} \a^{k-l} \intM[M\setminus\Bal{x}{\delta}]{\Dg^l \phi \, \Gg[x]}}.
    \end{align*}
    Integration by parts gives, as in \eqref{eq:decGabc}, for $l=1,\ldots k$,
    \begin{multline}\label{tmp:ebypart}
        \intM[M\setminus \Bal{x}{\delta}]{\Dg^l \phi\, \Gg[x]} = \intM[M\setminus\Bal{x}{\delta}]{\phi\, \Dg^l \Gg[x]}\\ + \sum_{m=0}^{l-1}\int_{\partial\Bal{x}{\delta}} \partial_\nu \Dg^{l-1-m}\phi \Dg^m \Gg[x] d\sigma_g\\ - \sum_{m=0}^{l-1}\int_{\partial\Bal{x}{\delta}} \Dg^{l-1-m}\phi\, \partial_\nu \Dg^m \Gg[x] d\sigma_g,
    \end{multline}
    where $\partial_\nu$ is the covariant derivative along the normal direction to $\partial\Bal{x}{\delta}$ in $M$. Using \eqref{eq:estGg0}, we obtain for $l=1,\ldots k$, $m = 0,\ldots l-1$,
    \begin{equation}\label{tmp:abord1}
        \abs{\int_{\partial\Bal{x}{\delta}} \partial_\nu \Dg^{l-1-m}\phi \Dg^m \Gg[x] d\sigma_g} \leq C \delta^{-n+2k-2m} \int_{\partial\Bal{x}{\delta}}d\sigma_g = o(1)
    \end{equation}
    as $\delta \to 0$. On the other hand, when $l=k$ and $m=k-1$, we compute
    \begin{multline}\label{tmp:bbycont}
        -\int_{\partial\Bal{x}{\delta}} \phi(y) \partial_\nu \Dg^{k-1}\Gg[x](y) d\sigma_g(y)\\ = \phi(x) \int_{\partial\Bal{x}{\delta}} -\partial_\nu \Dg^{k-1}\Gg[x](y)d\sigma_g(y) + o(1) \quad \text{as $\delta\to 0$}
    \end{multline}
    since $\phi$ is $C^\infty(M)$. Using \eqref{eq:errDgD} iteratively, we have
    \begin{multline*}
        \int_{\partial\Bal{x}{\delta}} -\partial_\nu \Dg^{k-1}\Gg[x](y) d\sigma_g(y)\\
        \begin{aligned}  
            &= \int_{\partial\Bal{0}{\delta}} -\partial_\nu\bpr{\Dg[\xi]^{k-1}\Ga[0]{k}(u)}d\sigma + \bigO\bpr{\int_{\partial\Bal{0}{\delta}} \delta^2 \abs{\nabla^{2k-1}\Ga[0]{k}(u)} d\sigma}\\
                &\qquad + \bigO\bpr{\int_{\partial\Bal{0}{\delta}}\delta \abs{\nabla^{2k-2}\Ga[0]{k}(u)}d\sigma} + \bigO\bpr{\sum_{m=2}^{2k-3}\int_{\partial\Bal{0}{\delta}}\abs{\nabla^m \Ga[0]{k}(u)}d\sigma}\\
                &= \int_{\partial\Bal{0}{\delta}} -\partial_\nu \bpr{\Dg[\xi]^{k-1} \Ga[0]{k}(u)} d\sigma + o(1), 
        \end{aligned}  
    \end{multline*}
    as $\delta\to 0$, estimating the terms with \eqref{eq:estGg0}. Here in the right-hand side, $\partial_\nu$ is now the derivative normal to the sphere in the Euclidean space. Using Corollary \ref{prop:betbvorDGak}, one has 
    \[  \int_{\partial\Bal{0}{\delta}} -\partial_\nu \bpr{\Dg[\xi]^{k-1} \Ga[0]{k}(u)} d\sigma = \int_{\partial\Bal{0}{\delta}} -\partial_\nu \bpr{\Dg[\xi]^{k-1} \frac{c_{n,k}}{\abs{u}^{n-2k}}} d\sigma + o(1),
        \]
    and thus by definition of $c_{n,k}$ in \eqref{def:cnk}, we obtain in the end
    \begin{equation}\label{tmp:cest1}
        \int_{\partial\Bal{x}{\delta}} -\partial_\nu \Dg^{k-1}\Gg[x] d\sigma_g = 1 + o(1) \qquad \text{when $\delta\to 0$}.
    \end{equation}
    Finally, all the remaining terms are estimated with \eqref{eq:estGg0},
    \begin{equation}\label{tmp:dbord2}
        \abs{\int_{\partial\Bal{0}{\delta}} \Dg^{l-1-m}\phi\, \partial_\nu \Dg^{m}\Gg[x] d\sigma_g} \leq C\delta^{2k-m-1}= o(1)
    \end{equation}
    as $\delta \to 0$, when $m\neq k-1$, $1\leq l\leq k$. Coming back to \eqref{tmp:ebypart}, putting together \eqref{tmp:bbycont}, \eqref{tmp:cest1} in \eqref{tmp:abord1}, and with \eqref{tmp:dbord2}, we obtain \eqref{tmp:explx}.
    Note that all the terms $\Dg^l \Gg[x]$ are integrable by \eqref{eq:estGg0} except for the term $\Dg^k \Gg[x]$ which is only bounded by $\dg{x,y}^{-n}$ when $\dg{x,y} \leq 1/\sqrt{\a}$.
    \par Write $l_{\a,x}(y) := (\Dg + \a)^k \Gg[x](y)$ for all $x\neq y$, then $l_{\a,x} \in C^0(M \setminus \{x\})$ with support in $\Bal{x}{\tau_0}$ by the definition of $\Gg$. Now for $x\neq y$ such that $\dg{x,y} \leq \tau_0 $, write $u := \exp_x^{-1}(y)$, and compute
    \begin{multline*}
        (\Dg + \a)^k \Gg[x](\exp_x(u)) \begin{aligned}[t]
            &= (\Dg[\Tilde{g}]+\a)^k \bpr{\chi(\abs{u})\Ga[0]{k}(u)}\\
        \end{aligned}\\
            = \sum_{l=0}^k \tbinom{l}{k} \a^{k-l}\Bigg[\chi(\abs{u}) \Dg[\xi]^l \Ga[0]{k}(u) + \bigO\bpr{\sum_{m=1}^{2l-2}\abs{\nabla^{m}\bpr{\chi(\abs{u})\Ga[0]{k}(u)}}}\\
            + \bigO\bpr{\abs{u}\abs{\nabla^{2l-1}\bpr{\chi(\abs{u})\Ga[0]{k}(u)}}} + \bigO\bpr{\abs{u}^2 \abs{\nabla^{2l}\bpr{\chi(\abs{u})\Ga[0]{k}(u)}}}\Bigg],
    \end{multline*}
    using again \eqref{eq:errDgD} iteratively. By \eqref{eq:estGg0} and since $\Ga{k}$ is a fundamental solution for the operator $(\Dg[\xi]+\a)^k$ in $\R^n$, we have when $0< \abs{u} \leq 1/\sqa$,
    \begin{align*}
        \abs{(\Dg+\a)^k \Gg[x](\exp_x(u))} &= \sum_{l=0}^k \a^{k-l} \Bigg[\bigO\bpr{\abs{u}^2 \abs{\nabla^{2l}\Ga[0]{k}(u)}}\\
        &\quad  + \bigO\bpr{\abs{u}\abs{\nabla^{2l-1}\Ga[0]{k}(u)}} + \bigO\bpr{\sum_{m=1}^{2l-2}\abs{\nabla^{m}\Ga[0]{k}(u)}} \Bigg]\\
        &\leq C \sum_{l=0}^k \a^{k-l} \abs{u}^{2k-n-2l+2} \leq \abs{u}^{2-n}.
    \end{align*}
    Similarly, when $\sqa\abs{u} \geq 1$,
    \[  \abs{(\Dg+\a)^k \Gg[x](\exp_x(u))} \leq C\sum_{l=0}^k \a^{k-l} \abs{u}^2 \a^{k\frac{n-3}{4}+l}\abs{u}^{\frac{n(k-2)+k}{2}} e^{-\sqa \abs{u}}.
        \]
    We have thus shown that, for $x\neq y$,
    \begin{equation}  \label{eq:estlx}
        \abs{l_{\a,x}(y)} \leq C \begin{cases}
        \dg{x,y}^{-(n-2)} & \sqa \dg{x,y} \leq 1\\
        \a^{k\frac{n+1}{4}}\dg{x,y}^{\frac{(k-2)n+k+4}{2}} \expab{x,y} & \sqa \dg{x,y} \geq 1\\
        0 & \dg{x,y} \geq \tau_0
    \end{cases}.
    \end{equation}
    In particular, $l_{\a,x} \in L^1(M)$ so that \eqref{tmp:explx} becomes
    \[  \intM{(\Dg+\a)^k \phi\, \Gg[x]} = \phi(x) + \intM{\phi\,l_{\a,x}}
        \]
    for all $\phi \in C^{\infty}(M)$. Note that $k\frac{n+1}{4} - \frac{(k-2)n+k+4}{2} = \frac{n-2}{2}$, so that the two regimes in \eqref{eq:estlx} are of order $\a^{\frac{n-2}{2}}$ when $\dg{x,y} \sim 1/\sqa $.
\end{proof}

\subsection{Step 2: The induction step.}
In this step, we define a sequence of functions to iteratively improve the estimates on the error term.
\begin{proposition}\label{prop:defgamx}
    There exists $N\in \N$, $\tau_0>0$, $\a_0 \geq 1$ such that, for all $\a\geq \a_0$, there is a function $\Gg^* \in C^\infty(M\times M \setminus \Diag)$ and for all $x\in M$, a function $\gamma_{\a,x} \in C^0(M)$ such that
    \begin{equation}\label{eq:compG*}
        \intM{(\Dg+\a)^k \phi(y) \Gg^*(x,y)}(y) + \intM{\gamma_{\a,x}(y) \phi(y)}(y) = \phi(x)
    \end{equation}
    for all $\phi \in C^{\infty}(M)$. Moreover, both $\Gg[x]^*$ and $\gamma_{\a,x}$ are supported in $\Bal{x}{N\tau_0}$, $\Gg^*$ satisfies 
    \begin{equation}\label{eq:estG*}
        \Gg^*(x,y) = c_{n,k} \dg{x,y}^{2k-n}\bpr{1+\bigO\bpr{\eta(\sqa \dg{x,y})}}
    \end{equation}
    for $\sqa \dg{x,y} \leq 1$, and there is a constant $C>0$ independent of $\a \geq \a_0$ such that writing $p_{n,k} := \frac{k(n+1)+4}{2}N-n$, 
    \begin{equation}\label{eq:estgamx}
        \abs{\gamma_{\a,x}(y)} \leq C \a^{-N+\frac{n}{2}} \begin{cases}
        1 & \sqa \dg{x,y} \leq 1\\
        \bpr{\sqa \dg{x,y}}^{p_{n,k}} \expab{x,y} & \sqa \dg{x,y} \geq 1\\
        0 & \dg{x,y} \geq N\tau_0
    \end{cases},
        \end{equation}
    for all $x,y \in M$.
\end{proposition}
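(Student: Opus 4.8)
The plan is to follow the parametrix construction of Robert \cite{Rob10}, taking the approximate fundamental solution of Proposition \ref{prop:deflx} as a starting point and iterating a convolution correction. Read distributionally, \eqref{eq:Galx} says $(\Dg+\a)^k\Gg[x]=\delta_x+l_{\a,x}$ on $M$. I set $\Gamma^{(1)}_{\a,x}:=l_{\a,x}$ and, inductively, $\Gamma^{(i+1)}_{\a,x}(y):=\int_M l_{\a,z}(y)\,\Gamma^{(i)}_{\a,x}(z)\,dv_g(z)$; then I define
\[
    \Gg^*(x,y):=\Gg(x,y)+\sum_{i=1}^{N-1}(-1)^i\int_M\Gg(z,y)\,\Gamma^{(i)}_{\a,x}(z)\,dv_g(z),\qquad \gamma_{\a,x}:=(-1)^N\,\Gamma^{(N)}_{\a,x}.
\]
All integrals make sense by the integrability of the kernels involved. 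Testing against $\phi\in C^\infty(M)$, applying Fubini and \eqref{eq:Galx} to $\Gg[x]$ and to each correction term, the contributions of the $\Gamma^{(i)}_{\a,x}$ telescope and one recovers exactly \eqref{eq:compG*} (these manipulations being justified, as in the proof of Proposition \ref{prop:deflx}, by puncturing small geodesic balls around the singularities). Since $\Gg[z]$ and $l_{\a,z}$ are supported in $\Bal{z}{\tau_0}$, an induction shows $\Gamma^{(i)}_{\a,x}$ is supported in $\Bal{x}{i\tau_0}$, so that, once $N$ is fixed and $\tau_0$ is then chosen small enough that $N\tau_0<\inj/2$ (and small enough for the Euclidean approximations of Step 1 to hold), both $\Gg^*(x,\cdot)$ and $\gamma_{\a,x}$ are supported in $\Bal{x}{N\tau_0}$; and since $\Gg[x]$ and $l_{\a,x}$ are smooth away from $x$, standard elliptic and convolution regularity give $\Gg^*\in C^\infty(M\times M\setminus\Diag)$ and $\gamma_{\a,x}\in C^0(M)$ (the singularity disappearing once $2N>n$). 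Finally $\a_0$ is chosen large, depending on $\tau_0$ and $N$, so that $1/\sqrt{\a_0}<\tau_0/2$ and the estimates below hold.

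The heart of the proof is the estimation of the iterated kernels $\Gamma^{(i)}_{\a,x}$ by repeatedly applying the exponential Giraud Lemma \ref{prop:expGir}, starting from the two-regime bound on $l_{\a,x}$ of Proposition \ref{prop:deflx}. The decisive point — and the reason Lemma \ref{prop:expGir} is formulated with the scale $1/\sqa$ separating the near-diagonal and exponential regimes — is that convolving $\dg{z,y}^{-(n-2)}$ (truncated to $\sqa\dg{z,y}\le1$) with $\dg{x,z}^{-a}$ (truncated to $\sqa\dg{x,z}\le1$) produces a kernel bounded by $C\dg{x,y}^{-(a-2)}$ on $\{\sqa\dg{x,y}\le1\}$ \emph{without picking up any positive power of $\a$} (with a logarithm at the borderline $a=2$, and a bounded function once $a<2$), while the exponential tails convolve into a tail of the same shape with an updated polynomial prefactor. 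This gives by induction, for $\sqa\dg{x,y}\le1$,
\[
    \abs{\Gamma^{(i)}_{\a,x}(y)}\le C\,\dg{x,y}^{-(n-2i)}\quad\text{when }n-2i>0,
\]
with the obvious logarithmic correction when $n-2i=0$ and $\abs{\Gamma^{(i)}_{\a,x}(y)}\le C\a^{(n-2i)/2}$ once $n-2i<0$; for $\sqa\dg{x,y}\ge1$ one gets an exponential bound whose polynomial prefactor is tracked explicitly. Choosing $N$ larger than $n/2$, $\Gamma^{(N)}_{\a,x}$ is bounded; moreover, once a kernel is bounded, each further convolution with $l_{\a,\cdot}$ multiplies its supremum by $\int_M\abs{l_{\a,z}(y)}\,dv_g(z)\le C/\a$ — a direct computation from Proposition \ref{prop:deflx}, the singular part contributing $\int_0^{1/\sqa}r\,dr=1/(2\a)$ and the exponential part being handled by the substitution $s=\sqa\,r$. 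Since the first bounded iterate, occurring around index $i_0\simeq n/2$, has size $\a^{(n-2i_0)/2}$, and there remain $N-i_0$ further convolutions each costing $\a^{-1}$, the product is $\a^{(n-2i_0)/2-(N-i_0)}=\a^{-N+n/2}$, which is the near-diagonal part of \eqref{eq:estgamx}; propagating the exponential tails through the $N$ convolutions produces the prefactor exponent $p_{n,k}=\tfrac{k(n+1)+4}{2}N-n$ in the far range (any logarithmic loss at a borderline dimension being absorbed by enlarging $N$ by one).

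For the expansion \eqref{eq:estG*}, I start from $\Gg(x,y)=c_{n,k}\dg{x,y}^{2k-n}\bpr{1+\bigO(\eta(\sqa\dg{x,y}))}$, which is \eqref{eq:decompGg}, and show that each correction $\int_M\Gg(z,y)\Gamma^{(i)}_{\a,x}(z)\,dv_g(z)$ is of lower order. Convolving $\Gg[z](y)\le C\dg{z,y}^{2k-n}$ with $\Gamma^{(i)}_{\a,x}(z)\le C\dg{x,z}^{-(n-2i)}$ via Lemma \ref{prop:expGir} gives, for $\sqa\dg{x,y}\le1$, a bound $C\dg{x,y}^{2k-n+2i}$ (again with a logarithm when $n=2k+2$, a constant when $n=2k+1$), and since $\a\ge1$, $\dg{x,y}\le1/\sqa$ and $i\ge1$ this is $\bigO\bpr{\dg{x,y}^{2k-n}\eta(\sqa\dg{x,y})}$; summing over $i=1,\dots,N-1$ yields \eqref{eq:estG*}.

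The main obstacle is precisely this control of the $\a$-dependence through the iteration, as flagged in Remark \ref{rem:adiverg}: a naive use of the classical Giraud Lemma, which is oblivious to the scale $1/\sqa$, would let a positive power of $\a$ accumulate at every convolution and would make $\gamma_{\a,x}$ grow rather than decay as $\a\to\infty$. Everything hinges on measuring the near-diagonal bounds against the intrinsic length $1/\sqa$, so that the singular coefficients stay $\bigO(1)$ along the iteration and only the $L^1$-smallness $1/\a$ of $l_{\a,\cdot}$ is collected once the kernels have become bounded.
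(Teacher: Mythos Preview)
Your proposal is correct and follows essentially the same parametrix construction as the paper: the same iterated convolutions $\Gamma^{(i)}$, the same choice $N>n/2$, the same telescoping verification of \eqref{eq:compG*}, and the same Giraud-type control of the near-diagonal and exponential regimes (note that the lemma you actually describe and need is the $\a$-scaled manifold version, Lemma~\ref{prop:rexpGir}, not Lemma~\ref{prop:expGir}). The paper applies Lemma~\ref{prop:rexpGir} directly at each step, which tracks the powers of $\a$ automatically via the compatibility condition \eqref{eq:tmpcondpro}, so your separate $L^1$-smallness heuristic for the post-boundedness iterations is not needed there, though it is a correct and illuminating complementary viewpoint.
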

\begin{proof}
Fix $\tau_0>0$ such that $\tau_0 < \inj/(n+2)$, and take $\a_0\geq 1$ given by Proposition \ref{prop:deflx}. We define, for all $\a \geq \a_0$ and all $x\neq y$ in $M$,
\begin{align*}
    \Gam{1}(x,y) &:= -l_{\a,x}(y)\\
    \Gam{i+1}(x,y) &:= \intM{\Gam{i}(x,z) \Gam{1}(z,y)}(z) \qquad \forall~i\geq 2.
\end{align*}
By the exponential version of Giraud's Lemma, Lemma \ref{prop:rexpGir}, and by \eqref{eq:estlx} we see that, as long as $2i < n$ and $i\tau_0 < \inj$, we have $\Gam[x]{i} \in L^1(M),\, \Gam{i} \in C^0(M\times M \setminus \Diag)$, and
\[  \abs{\Gam{i}(x,y)} \leq C_i \begin{cases}
    \dg{x,y}^{-(n-2i)} & \sqa \dg{x,y} \leq 1\\
    \a^{ki\frac{n+1}{4}} \dg{x,y}^{\frac{k(n+1) + 4}{2}i - n} \expab{x,y} & \sqa \dg{x,y} \geq 1\\
    0 & \dg{x,y} \geq i\tau_0
\end{cases}.
    \]
Now take $N = \left\lfloor \tfrac{n}{2}\right\rfloor +1 \in \N$, so that $2N > n$. By the choice of $\tau_0$, we have $N\tau_0 < \inj/2$. Lemma \ref{prop:rexpGir} then shows that $\Gam{N} \in C^0(M\times M)$ and that
\begin{multline}\label{eq:estGamN}
    \abs{\Gam{N}(x,y)}\\ \leq C \begin{cases}
    \a^{-N+\frac{n}{2}}   &\sqa\dg{x,y}\leq 1\\
    \a^{kN\frac{n+1}{4}} \dg{x,y}^{\frac{k(n+1) + 4}{2}N - n} \expab{x,y}   &\sqa\dg{x,y}\geq 1\\
    0  &\dg{x,y}\geq N\tau_0
\end{cases}.
    \end{multline}
Let, for $i=1, \ldots N-1$ and $x\neq y$,
\begin{equation}\label{eq:defggi}  
    \Gg^i(x,y) := \intM{\Gam{i}(x,z) \Gg(z,y)}(z).
\end{equation}
If $x\neq y$ are such that $\sqa \dg{x,y} \leq 1$, we again have by Lemma \ref{prop:rexpGir}
\begin{equation}\label{eq:estTGsqa}
    \abs{\Gg^i(x,y)} \leq C_i \begin{cases}
        \dg{x,y}^{-(n-2k -2i)} & \text{when $2k+2i < n$}\\
        1 + \abs{\log(\sqa \dg{x,y})} & \text{if $2k+2i = n$}\\
        \a^{-\frac{2k+2i-n}{2}} & \text{when $2k + 2i > n$}
    \end{cases}.
\end{equation}
While, if $\sqa \dg{x,y} \geq 1$, writing for simplicity
\begin{align}\label{eq:defpiri}
    p_i &:= k\tfrac{i(n+1)+n-3}{4} & &\text{and} & \rho_i &:= \tfrac{k(n+1)(i+1)-2n+4i}{2}
\end{align}
satisfying $2p_i - \rho_i = n-2k - 2i$, we have
\begin{equation}\label{eq:estTG}
    \abs{\Gg^i(x,y)} \leq C_i \a^{p_i} \dg{x,y}^{\rho_i}\expab{x,y}
\end{equation}
with $\Gg^i(x,y) = 0$ when $\dg{x,y} \geq (i+1)\tau_0$.
\par In some sense, the $\Gg^i$ are successive error terms in the expression of the Green's function. Let us define for all $x\neq y$ in $M$,
\begin{equation}\label{def:Gg*}
    \Gg^*(x,y) := \Gg(x,y) + \sum_{i=1}^{N-1} \Gg^i(x,y).
    \end{equation}
Then we have, for all $\phi \in C^{\infty}(M)$ and $x \in M$, by Proposition \ref{prop:deflx} and since $\Gam[x]{1} = -l_{\a,x}$,
\begin{multline*}
    \intM{(\Dg + \a)^k \phi(y) \Gg^*(x,y)}(y) \\ 
    \begin{aligned}
        &= \intM{(\Dg+\a)^k \phi\, \Gg[x]}\\
        &\qquad\qquad + \sum_{i=1}^{N-1} \intM{\intM{\Gam{i}(x,z)\Gg(z,y)(\Dg+\a)^k\phi(y)}(z)}(y)\\
        &= ~\phi(x) + \intM{l_{\a,x}(y)\phi(y)}(y)\\
        &\qquad\qquad + \sum_{i=1}^{N-1} \intM{\Gam{i}(x,z)\bpr{\intM{\Gg[z](\Dg+\a)^k\phi}}}(z)\\
        &= ~\phi(x) + \intM{l_{\a,x}\,\phi} + \sum_{i=1}^{N-1} \intM{\Gam{i}(x,z) \bsq{\phi(z) + \intM{l_{\a,z}\, \phi}}}(z)
    \end{aligned}
\end{multline*}
where we used Fubini. Therefore, we obtain with the definition of $\Gam{i+1}$ that 
\begin{align*}
    \intM{(\Dg + \a)^k \phi(y) \Gg^*(x,y)}(y) &= ~\phi(x) - \intM{\Gam[x]{1} \phi} + \sum_{i=1}^{N-1} \intM{\Gam[x]{i} \phi}\\
        &\hfil+ \sum_{i=1}^{N-1} \intM{\bsq{\intM{\Gam{i}(x,z)l_z(y)}(z)} \phi(y)}(y)\\
        &= ~\phi(x) - \intM{\Gam[x]{N} \phi}
\end{align*} 
\par We now let $\gamma_{\a,x}(y) := \Gam{N}(x,y)$, then \eqref{eq:compG*} follows, and $\gamma_{\a,x} \in C^0(M)$ satisfies \eqref{eq:estgamx} thanks to \eqref{eq:estGamN}. Finally, for $\sqa\dg{x,y} \leq 1$, and again by Lemma \ref{prop:rexpGir}, we have
\begin{equation}\label{eq:estsumGi}
    \begin{aligned}
        \abs{\sum_{i=1}^{N-1} \Gg^i(x,y)} &\leq \begin{bigcases}
            & C\a^{-1/2} & \text{if $n-2k= 1$,}&\\
            & C\big(1+\abs{\log\sqa\dg{x,y}}\big) & \text{if $n-2k= 2$,}&\\
            & C\dg{x,y}^{-(n-2k-2)} & \text{if $n-2k\geq 3$,}&
        \end{bigcases}\\
            & = C \a^{-1}\dg{x,y}^{-(n-2k)}\eta(\sqa\dg{x,y}),
    \end{aligned}
\end{equation}
recalling the definition \eqref{def:eta} for $\eta$. This means that, with \eqref{eq:decompGg}, we still have
\[
    \Gg^*(x,y) = c_{n,k}\dg{x,y}^{-(n-2k)} \bpr{1 +\bigO\big(\eta(\sqa\dg{x,y})\big)}
\]
when $\sqa \dg{x,y} \leq 1$.
\end{proof}

\subsection{Step 3: Estimates on the remainder term.}\label{sec:remterm} 
With Proposition \ref{prop:defgamx}, we have modified the starting function $\Gg$ to get closer to a real fundamental solution to the operator $(\Dg+\a)^k$ in $M$. The remainder $\gamma_{\a,x}$ is now uniformly bounded in $\a$ by \eqref{eq:estgamx} and continuous in $M\times M$.
\par Fix $\a \geq \a_0$ given by the Proposition \ref{prop:defgamx}, and $x \in M$. We let $u_{\a,x}$ be the unique solution of
\begin{equation}\label{eq:equx}
    (\Dg + \a)^k u_{\a,x} = \gamma_{\a,x} \qquad \text{weakly in $M$,}
\end{equation}
where $\gamma_{\a,x} = \Gam[x]{N}$ as in the proof of Proposition \ref{prop:defgamx}.
Such $u_{\a,x}$ exists and is unique since $(\Dg+\a)^k$ is coercive for any $\a>0$, and since $\gamma_{\a,x} \in C^0(M)$.
\begin{remark}\label{rem:adiverg}
    By standard elliptic theory and Sobolev's embeddings, we have $u_{\a,x} \in C^{2k-1, \theta}(M)$ for all $\theta \in (0,1)$, with
    \begin{equation}\label{eq:1stestux}
        \norm{u_{\a,x}}_{C^l(M)} \leq C_{l,\a} \norm{\gamma_{\a,x}}_{C^0(M)} \leq C_{l,\a} \qquad l=0,\dots 2k-1
    \end{equation}
    by \eqref{eq:estgamx}.
    The constants $C_{l,\a}$ depend on $\a$, but we want pointwise estimates on $u_{\a,x}$ and its derivatives with an explicit dependence in $\a$, so that \eqref{eq:1stestux} is not enough, unlike in \cite{Rob10} where \eqref{eq:1stestux} leads to the conclusion. In particular, in this step, we aim at recovering some exponential decay for $u_{\a,x}(y)$, when $\sqa \dg{x,y}\geq 1$.
\end{remark}

\begin{proposition}\label{prop:defGga}
    Let $\a \geq \a_0$ given by Proposition \ref{prop:defgamx}. For $\a\geq \a_0$ and $x\in M$, let $\Gg^*$ be as in \eqref{def:Gg*}, and let $u_{\a,x} \in \Sob(M)$ be the unique weak solution to \eqref{eq:equx}. Define
    \begin{equation}\label{def:Gga}
        \Gga(x,y) := \Gg^*(x,y) + u_{\a,x}(y)
    \end{equation}
    for all $x\neq y$ in $M$. Then $\Gga$ is a fundamental solution for the operator $(\Dg+\a)^k$ in $M$. Moreover, if $f \in L^q(M)$ for some $q>n/2k$, and $\Tilde{u} \in \Sob[k,q](M)$ solves $(\Dg+\a)^k \Tilde{u} = f$ in $M$, then $\Tilde{u} \in C^{0}(M)$ and we have the following representation formula,
    \[  \Tilde{u}(x) = \intM{\Gga(x,z)f(z)}(z) \quad \text{for all $x\in M$}.
        \]
\end{proposition}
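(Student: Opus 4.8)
The plan is to verify the two assertions of the Proposition in turn, and the first one is almost immediate from what has already been proved. To see that $\Gga$ is a Green's function in the sense of Definition \ref{def:Green}, I would combine the defining identity \eqref{eq:compG*} for $\Gg^*$ with the weak equation \eqref{eq:equx} for $u_{\a,x}$. Since $(\Dg+\a)^k$ is formally self-adjoint and $M$ has no boundary, testing the weak formulation of \eqref{eq:equx} against $\phi\in C^\infty(M)$ and integrating by parts gives
\[  \int_M u_{\a,x}\,(\Dg+\a)^k\phi\, dv_g = \int_M \gamma_{\a,x}\,\phi\, dv_g \qquad\text{for all }\phi\in C^\infty(M).
    \]
Adding this to \eqref{eq:compG*} cancels the $\gamma_{\a,x}$ contributions and yields $\int_M\Gga[x]\,(\Dg+\a)^k\phi\, dv_g=\phi(x)$ for all $\phi\in C^\infty(M)$. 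It then remains only to check $\Gga[x]\in L^1(M)$: by \eqref{eq:estG*} the function $\Gg[x]^*$ has the integrable singularity $\dg{x,\cdot}^{2k-n}$ (as $n-2k<n$) and compact support, so $\Gg[x]^*\in L^1(M)$, while $u_{\a,x}\in C^0(M)\subset L^1(M)$ since $M$ is compact.

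For the representation formula I would argue by density. Given $f\in L^q(M)$ with $q>n/2k$ and $\Tilde u\in\Sob[k,q](M)$ solving $(\Dg+\a)^k\Tilde u=f$, elliptic $L^q$-regularity upgrades $\Tilde u$ to $\Sob[2k,q](M)$; since $q>n/2k$ we have $2k-n/q>0$, so the Sobolev embedding $\Sob[2k,q](M)\emb C^0(M)$ gives $\Tilde u\in C^0(M)$, which is the first part of the claim. For the formula itself, I would extend the identity $\int_M\Gga[x]\,(\Dg+\a)^k\phi\, dv_g=\phi(x)$ from $\phi\in C^\infty(M)$ to all $\phi\in\Sob[2k,q](M)$: the key remark is that the conjugate exponent $q'$ satisfies $q'<n/(n-2k)$, hence $\dg{x,\cdot}^{2k-n}\in L^{q'}(M)$ and therefore $\Gga[x]\in L^{q'}(M)$ (using also $u_{\a,x}\in C^0(M)$ and the compact support of $\Gg[x]^*$). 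Consequently $\phi\mapsto\int_M\Gga[x]\,(\Dg+\a)^k\phi\, dv_g$ is a continuous linear functional on $\Sob[2k,q](M)$, as is $\phi\mapsto\phi(x)$ by the embedding $\Sob[2k,q](M)\emb C^0(M)$; the two functionals agree on the dense subspace $C^\infty(M)$ by the first assertion, hence on all of $\Sob[2k,q](M)$. Evaluating at $\phi=\Tilde u$ then gives $\Tilde u(x)=\int_M\Gga(x,z)\,f(z)\, dv_g(z)$.

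The only point that needs a little attention is the bookkeeping of Lebesgue exponents: the hypothesis $q>n/2k$ is exactly what simultaneously makes $\Tilde u$ continuous (through $\Sob[2k,q]\emb C^0$) and places $\Gga[x]$ in the dual space $L^{q'}(M)$, which is what legitimises the extension-by-density step; everything else is standard ($L^q$ elliptic estimates, Sobolev embeddings, and the density of $C^\infty(M)$ in the Sobolev spaces $\Sob[l,p](M)$). I do not expect a genuine obstacle here, since the substantial analytic work — the construction of $\Gg^*$ together with the pointwise bounds \eqref{eq:estG*} and \eqref{eq:estgamx} — has already been carried out in the previous steps.
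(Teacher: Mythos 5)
Your proof is correct, and the first half (that $\Gga$ is a fundamental solution, via \eqref{eq:compG*} and the weak equation for $u_{\a,x}$, plus the $L^1$ check) matches the paper essentially line for line. For the representation formula, however, you take a genuinely different route. The paper approximates $f$ by smooth functions $f_m$, solves $(\Dg+\a)^k\Tilde u_m=f_m$, and passes to the limit: it uses the compact embedding $C^{0,\theta}(M)\sub C^0(M)$ together with uniqueness of the PDE to get $\Tilde u_m\to\Tilde u$ uniformly, and a H\"older estimate with $\Gga[x]\in L^{p}$, $1\leq p<\tfrac{n}{n-2k}$, to pass to the limit in $\intM{\Gga[x] f_m}$. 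You instead extend the defining identity $\intM{\Gga[x](\Dg+\a)^k\phi}=\phi(x)$ from $\phi\in C^\infty(M)$ to $\phi\in\Sob[2k,q](M)$ directly: both sides are continuous linear functionals on $\Sob[2k,q](M)$ --- the left side because $\Gga[x]\in L^{q'}(M)$ (from $q>n/2k\Leftrightarrow q'<n/(n-2k)$) and $(\Dg+\a)^k\colon\Sob[2k,q]\to L^q$ is bounded, the right side by the Sobolev embedding $\Sob[2k,q]\emb C^0$ --- and they agree on the dense subspace $C^\infty(M)$; evaluating at $\phi=\Tilde u$ then gives the formula. Both arguments exploit exactly the same exponent bookkeeping, but yours avoids the compactness/Arzel\`a--Ascoli step and the subsequence extraction, so it is a bit more economical. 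It relies on the density of $C^\infty(M)$ in $\Sob[2k,q](M)$, which holds by the paper's definition of these Sobolev spaces as closures of $C^\infty(M)$, so there is no gap.
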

\begin{proof}
    For the first part, go back to \eqref{eq:compG*}. For all $\phi \in C^\infty(M)$,
    \begin{multline}\label{eq:GgaisG}
        \intM{(\Dg+\a)^k \phi(y)\,\Gga(x,y)}(y)\\
        \begin{aligned}
            &= \phi(x) - \intM{\gamma_{\a,x} \phi} + \intM{u_{\a,x} (\Dg+ \a)^k\phi}\\
            &= \phi(x)    
        \end{aligned}
    \end{multline}
    since $u_{\a,x}$ is a weak solution to \eqref{eq:equx}. By \eqref{eq:estG*}, and since $u_{\a,x}$ is continuous in $M$, we have $\Gga[x] \in L^1(M)$ and $\Gga$ is a fundamental solution for the operator $(\Dg+\a)^k$ in $M$. 
    \par Now take $f \in L^q(M)$, $q> n/2k$ and $\Tilde{u} \in \Sob(M)$ satisfying $(\Dg+\a)^k \Tilde{u} = f$ weakly in $M$. Standard elliptic theory gives $\Tilde{u} \in \Sob[2k,q](M)$, and then with Sobolev's embeddings we have $\Tilde{u}\in C^{0,\theta}(M)$ for some $\theta \in (0,1)$. Let $(f_m)_{m\geq 1}$ be a sequence of functions in $C^\infty(M)$ such that $f_m \to f$ in $L^q$, and take $\Tilde{u}_m \in C^{\infty}(M)$ the respective solutions to $(\Dg+\a)^k \Tilde{u}_m = f_m$. By elliptic estimates and Sobolev's embedding, for any fixed $\a\geq 1$, $(\Tilde{u}_m)_{m\geq 1}$ is a bounded sequence in $C^{0,\theta}(M)$. Then, by compactness of the inclusion $C^{0,\theta}(M) \sub C^0(M)$ and since the solution is unique, $\Tilde{u}_m \to \Tilde{u}$ in $C^{0}(M)$ up to a subsequence. Since $u_m \in C^\infty(M)$ we have, testing \eqref{eq:GgaisG} against $u_m$,
    \[  \Tilde{u}_m(x) = \intM{\Gga(x,y) f_m(y)}(y).
        \]
    And, for all $x\in M$, $\Tilde{u}_m(x) \to^{m\to \infty} \Tilde{u}(x)$. Finally, for all $1 \leq p < \frac{n}{n-2k}$, we have $\Gga[x] \in L^p(M)$ since $\Gg[x]^* \in L^p(M)$ by \eqref{eq:estG*} and $u_{\a,x} \in C^0(M) \sub L^p(M)$. Thus, since $f_m \to f$ in $L^q(M)$, choosing $1 \leq p < \frac{n}{n-2k}$ such that $\tfrac{1}{p}+\tfrac{1}{q} = 1$, Hölder's inequality implies that
    \begin{align*}  
        \abs{\intM{\Gga(x,y) f_m(y)} - \intM{\Gga(x,y)f(y)}} &\leq \Lnorm[(M)]{p}{\Gga[x]} \Lnorm[(M)]{q}{f_m -f}\\
            &\to 0.
    \end{align*}
    We have thus shown, for all $x\in M$,
    \[  \Tilde{u}(x) = \intM{\Gga(x,y)f(y)}(y).
        \]
\end{proof}

Thanks to Proposition \ref{prop:defGga} and \eqref{eq:equx}, there is a representation formula for $u_{\a,x}$ itself, given by
\begin{equation}\label{eq:repux}  
    \begin{aligned}
        u_{\a,x}(y) &= \intM{\Gga(y,z) \gamma_{\a,x}(z)}(z)\\
            &= \intM{\Gg^*(y,z) \gamma_{\a,x}(z)}(z) + \intM{u_y(z)\gamma_{\a,x}(z)}(z),
    \end{aligned}
    \end{equation}
where $\Gg^*$ and $\gamma_{\a,x}$ are introduced in Proposition \ref{prop:defgamx}. We now use this formula to self-improve the estimates on $u_{\a,x}$.
We prove exponential decay on $u_{\a,x}$, when $\a\geq \a_0$ is large enough. 
This is a striking difference with the case of operators with bounded coefficients.
\begin{proposition}\label{prop:bvorux}
    There exists $\a_0\geq 1$ such that the following holds. For all $0<\epsilon<1$, there is a constant $C_\epsilon>0$ such that for all $\a \geq \a_0$ and all $x,y \in M$,
    \begin{equation}\label{eq:estux}
        \abs{u_{\a,x}(y)} \leq C_\epsilon \a^{-k} \begin{cases}
        1 & \sqa \dg{x,y} \leq 1\\
        e^{-(1-\epsilon)\sqa \dg{x,y}} & \sqa \dg{x,y} \geq 1\\
        e^{-(1-\epsilon)\sqa \,\inj/2} & \dg{x,y} \geq \inj/2
    \end{cases},
        \end{equation}
    where $u_{\a,x}$ is the unique weak solution to \eqref{eq:equx}.
\end{proposition}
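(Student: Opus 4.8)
The plan is to iterate the representation formula \eqref{eq:repux},
\[
    u_{\a,x}(y) = \underbrace{\intM{\Gg^*(y,z)\,\gamma_{\a,x}(z)}(z)}_{=:\,I(x,y)} \;+\; \intM{u_{\a,y}(z)\,\gamma_{\a,x}(z)}(z),
\]
as a self-improving (fixed-point) scheme. Fix $0<\epsilon<1$ and write $w(x,y):=e^{-(1-\epsilon)\sqa\min\{\dg{x,y},\inj/2\}}$, so that \eqref{eq:estux} is exactly the assertion $\abs{u_{\a,x}(y)}\leq C_\epsilon\,\a^{-k}\,w(x,y)$ for all $x,y\in M$. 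Two elementary facts do the work. \emph{(a)} The triangle inequality gives $\min\{\dg{y,z},\inj/2\}\geq\min\{\dg{x,y},\inj/2\}-\dg{x,z}$, hence
\[
    w(y,z)\;\leq\; w(x,y)\,e^{(1-\epsilon)\sqa\dg{x,z}}\qquad\text{for all }x,y,z\in M;
\]
this is the mechanism by which the $e^{-\sqa\dg{x,y}}$–type decay of a Helmholtz kernel propagates. \emph{(b)} From \eqref{eq:estG*}, \eqref{eq:estGg} and \eqref{eq:estTG}, absorbing polynomial prefactors into the exponential at the cost of $\epsilon$, one has $\abs{\Gg^*(y,z)}\leq C_\epsilon\,\dg{y,z}^{2k-n}\,w(y,z)$ with $\Gg^*(y,\cdot)$ supported in $\Bal{y}{N\tau_0}$, while \eqref{eq:estgamx} gives $e^{(1-\epsilon)\sqa\dg{x,z}}\abs{\gamma_{\a,x}(z)}\leq C_\epsilon\,\a^{-N+\frac n2}\,e^{-\frac\epsilon2\sqa\dg{x,z}}$ and $\Lnorm[(M)]{1}{\gamma_{\a,x}}\leq C\a^{-N}$, with $\gamma_{\a,x}$ supported in $\Bal{x}{N\tau_0}$.

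First I would record a crude uniform bound. By \eqref{eq:1stestux}, $M_\a:=\sup_{x,z\in M}\abs{u_{\a,x}(z)}<\infty$ for each fixed $\a\geq\a_0$. Plugging $\abs{\Gg^*(y,z)}\leq C\dg{y,z}^{2k-n}$ (so that $\Lnorm[(M)]{1}{\Gg^*(y,\cdot)}\leq C\a^{-k}$, by integrating over $\Bal{y}{1/\sqa}$ and over its complement with the exponential decay) and $\Lnorm[(M)]{1}{\gamma_{\a,x}}\leq C\a^{-N}$ into the two terms of \eqref{eq:repux} yields $M_\a\leq C\a^{-k}+C\a^{-N}M_\a$, whence $M_\a\leq C\a^{-k}$ once $\a\geq\a_0$ with $\a_0$ chosen, universally (depending only on $(M,g),n,k$), so large that $C\a_0^{-N}\leq\frac12$; recall $N=\lfloor n/2\rfloor+1>n/2>k$.

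For the exponential decay, combine \emph{(b)} with \emph{(a)}: $\abs{I(x,y)}\leq C_\epsilon\,\a^{-N+\frac n2}\,w(x,y)\intM[\Bal{x}{N\tau_0}]{\dg{y,z}^{2k-n}e^{-\frac\epsilon2\sqa\dg{x,z}}}(z)$, and splitting the last integral into $\Bal{y}{1/\sqa}$ (on which $\int\dg{y,z}^{2k-n}\,dv_g\leq C\a^{-k}$) and its complement inside $\Bal{x}{N\tau_0}$ (on which $\dg{y,z}^{2k-n}\leq\a^{\frac{n-2k}{2}}$ and $\int e^{-\frac\epsilon2\sqa\dg{x,z}}\,dv_g\leq C_\epsilon\a^{-\frac n2}$) bounds it by $C_\epsilon\a^{-k}$, so that $\abs{I(x,y)}\leq C_\epsilon\,\a^{-N+\frac n2-k}\,w(x,y)\leq C_\epsilon\,\a^{-k}\,w(x,y)$. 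The same two facts give $\intM{w(y,z)\,\abs{\gamma_{\a,x}(z)}}(z)\leq w(x,y)\intM{e^{(1-\epsilon)\sqa\dg{x,z}}\abs{\gamma_{\a,x}(z)}}(z)\leq C_\epsilon\,\a^{-N}\,w(x,y)$. Now put $A_\epsilon(\a):=\sup_{x,y\in M}\abs{u_{\a,x}(y)}\,\a^{k}\,w(x,y)^{-1}$, finite for each fixed $\a$ since $M_\a<\infty$ and $w^{-1}\leq e^{(1-\epsilon)\sqa\inj/2}$. Inserting these two estimates and $\abs{u_{\a,y}(z)}\leq A_\epsilon(\a)\,\a^{-k}\,w(y,z)$ into \eqref{eq:repux} gives
\[
    A_\epsilon(\a)\;\leq\; C_\epsilon \;+\; C_\epsilon\,\a^{-N}\,A_\epsilon(\a),
\]
so $A_\epsilon(\a)\leq2C_\epsilon$ whenever $C_\epsilon\a^{-N}\leq\frac12$. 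This proves \eqref{eq:estux} for $\a\geq\a_1(\epsilon)$; on the remaining compact range $\a_0\leq\a\leq\a_1(\epsilon)$ the crude bound $\abs{u_{\a,x}(y)}\leq C\a^{-k}$ together with $1\leq e^{(1-\epsilon)\sqa\inj/2}w(x,y)$ and $\a\leq\a_1(\epsilon)$ gives \eqref{eq:estux} with a larger, $\epsilon$–dependent constant, so \eqref{eq:estux} holds for all $\a\geq\a_0$ with the original, $\epsilon$–independent $\a_0$.

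The hard part is the decay step: one must extract the \emph{sharp} rate $1-\epsilon$, measured in $\dg{x,y}$, from a convolution of two kernels, $\Gg^*$ and $\gamma_{\a,x}$, each concentrated near a different point — this is precisely the subtlety created by the diverging coefficient and flagged in Remark~\ref{rem:adiverg}, which is absent for operators with bounded coefficients as in \cite{Rob10}. Fact \emph{(a)} makes the two exponential rates add along a minimizing geodesic; the $\a^{-1/2}$–concentration of $\gamma_{\a,x}$ supplies the correct powers of $\a$; and the smallness of the ``mass'' $\Lnorm[(M)]{1}{\gamma_{\a,x}}=O(\a^{-N})$, with $N>n/2>k$, is what turns the inequality for $A_\epsilon(\a)$ into a genuine contraction. (The convolution estimates above can equivalently be run through the exponential Giraud's Lemma~\ref{prop:rexpGir}.)
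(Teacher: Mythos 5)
Your proof is correct, and it reaches the conclusion by a genuinely different technical route than the paper, while sharing the same high-level philosophy (a self-improving/fixed-point argument on the representation formula \eqref{eq:repux} with an exponentially-weighted norm). The key divergence is in how you treat the cross term $\intM{u_{\a,y}(z)\gamma_{\a,x}(z)}(z)$: the paper first applies the symmetry identity \eqref{eq:uxsym}, namely $\intM{u_y\,\gamma_{\a,x}} = \intM{\gamma_{\a,y}\,u_x}$ (obtained by two integrations by parts), so that the $u$ factor lives at the fixed point $x$; it then defines $\Upsilon_{\a,x}$ as a supremum over $y$ only, and feeds $\abs{u_{\a,x}(z)}\le\Upsilon_{\a,x}\Pea(x,z)$ together with Lemma~\ref{prop:girPsi} into the formula evaluated at a maximizer $y_\a$, concluding $\Upsilon_{\a,x}\to 0$ for each $x$ and then invoking compactness of $M$ to make the conclusion uniform in $x$. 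You instead bypass the symmetry identity entirely by working with the double supremum $A_\epsilon(\a)$ over both variables, and you propagate the exponential decay yourself via the triangle inequality $w(y,z)\le w(x,y)\,e^{(1-\epsilon)\sqa\dg{x,z}}$; this makes the uniformity in $x$ automatic rather than an appeal to compactness, and it lets you absorb the factor $\a^{-k}$ into the definition of $A_\epsilon(\a)$ so that a single pass produces the full estimate \eqref{eq:estux}, where the paper runs a second pass through \eqref{eq:repux} after obtaining the weaker $\abs{u_{\a,x}(y)}\le C_\epsilon\Pea(x,y)$. Your preliminary crude bound $M_\a\le C\a^{-k}$ is also a clean way to guarantee $A_\epsilon(\a)<\infty$ and to treat the compact range $\a_0\le\a\le\a_1(\epsilon)$; the paper handles the analogous issue implicitly through \eqref{eq:1stestux} and the continuity of $u_{\a,x}$. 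Both approaches are valid; yours trades the abstract Lemma~\ref{prop:girPsi} for hand-done convolution estimates and is somewhat more transparent about where the $\a$-powers come from, whereas the paper's symmetry trick keeps the auxiliary quantity indexed by $x$ and reuses the same Giraud machinery uniformly.
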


\begin{proof}
    We define, for all $\a\geq 1$ and $0<\epsilon<1$, a function $\Psi_{\epsilon,\a} \in L^1(M \times M)$ as
\begin{equation}\label{eq:defPea}
    \Pea(x,y) = \begin{cases}
    e^{-(1-\epsilon)} & \sqa \dg{x,y} \leq 1\\
    e^{-(1-\epsilon)\sqa \dg{x,y}} & \sqa \dg{x,y} \geq 1\\
    e^{-(1-\epsilon)\sqa \,\inj/2} & \dg{x,y} \geq \inj/2
\end{cases}.
    \end{equation}
    \par We prove the Proposition by using the representation formula \eqref{eq:repux} for $u_{\a,x}$. The first term is estimated with Lemma \ref{prop:rexpGir}, the estimates \eqref{eq:estTGsqa}, \eqref{eq:estTG} and \eqref{eq:estgamx} give
    \begin{multline}\label{eq:Ggam}
        \abs{\intM{\Gg^*(y,z)\gamma_{\a,x}(z)}(z)}\\ \leq C\begin{bigcases}
            &\a^{-N + \frac{n-2k}{2}} &  &\sqa \dg{x,y} \leq 1\\
            &\sum_{i=0}^N \a^{\Tilde{p}_i} \dg{x,y}^{\Tilde{\rho}_i} \expab{x,y} &  &\sqa \dg{x,y} \geq 1\\
            &0 &  &\dg{x,y} \geq 2N\tau_0
    \end{bigcases}
    \end{multline}
    where 
    \[  \Tilde{p}_i := k\tfrac{n+1}{4}(N+i) + k \tfrac{n-3}{4} \text{ and } \Tilde{\rho}_i := \tfrac{k(n+1)(N+i+1)}{2} + 2(N+i) -n, 
    \] 
    with $N\tau_0 < \inj/2$ and $N > n/2$, $N$ and $\tau_0$ are as defined in the proof of Proposition \ref{prop:defgamx}. 
    \par Fix any $0<\epsilon<1$ and $x\in M$, there is a constant $C_\epsilon>0$ independent of $\a$ and $x$ such that, for $\a$ large enough, we can write \eqref{eq:Ggam} as
    \begin{multline}\label{tmp:estepsGg*}
        \abs{\intM{\Gg^*(y,z)\gamma_{\a,x}(z)}(z)}\\ 
        \begin{aligned}
            &\leq C_\epsilon \a^{-N+\frac{n-2k}{2}}\begin{cases}
            1 & \sqa \dg{x,y}\leq 1\\
            e^{-(1-\epsilon)\sqa \dg{x,y}} & \sqa \dg{x,y}  \geq 1\\
            0 & \dg{x,y}  \geq 2\tau
            \end{cases}\\
            &\leq C_\epsilon \a^{-N+\frac{n-2k}{2}} \Pea(x,y),
        \end{aligned}
    \end{multline}
    for all $y \in M$.
    Note that, by integration by parts and the fact that $u_x$ solves \eqref{eq:equx}, one obtains 
    \begin{equation}\label{eq:uxsym}
    \begin{aligned}
        \intM{u_y(z)\gamma_{\a,x}(z)}(z) &= \intM{u_y(z)\, (\Dg+\a)^k u_x(z)}(z)\\
         &= \intM{(\Dg+\a)^k u_y(z)\, u_x(z)}(z)\\
         & = \intM{\gamma_{\a,y}(z)\, u_x(z)}(z).
    \end{aligned}
    \end{equation}
    \par We now claim that $u_{\a,x} = o(\Pea)$ in the sense that
    \[  \Lnorm[(M)]{\infty}{\frac{u_{\a,x}(\cdot)}{\Pea(x,\cdot)}} \to 0 \qquad \text{as $\a \to \infty$, when $x\in M$ is fixed}.
        \]
    For this, define for all $\a$,
    \[  
        \Upsilon_{\a,x} := \Lnorm[(M)]{\infty}{\frac{u_{\a,x}}{\Pea(x,\cdot)}} \\
    \]
    and let $y_\a \in M$ be such that $\frac{\abs{u_{\a,x}(y_\a)}}{\Pea(x,y_\a)} = \Upsilon_{\a,x}$.
    We know $\Upsilon_{\a,x}$ and $y_\a$ exist since $u_{\a,x}$, $\Pea(x,\cdot)$ are continuous, and $\Pea >0$ in $M$. Applying \eqref{eq:repux} at the point $y_\a$ and using \eqref{tmp:estepsGg*} and \eqref{eq:uxsym} we now have, for all $\a$ large enough,
    \begin{equation}\label{tmp:uxgametc}\begin{aligned}
        \abs{u_{\a,x}(y_\a)} &\leq \abs{\intM{\Gg^*(y_\a, z) \gamma_{\a,x}(z)}(z)} + \abs{\intM{u_{x}(z) \gamma_{\a,y_\a}(z)}(z)}\\
            &\leq C_\epsilon \a^{-N +\frac{n-2k}{2}} \Pea(x,y_\a) + \Upsilon_{\a,x} \intM{\Pea(x,z)\abs{\gamma_{\a,y_\a}(z)}}(z).
    \end{aligned}\end{equation}
    Using the bounds \eqref{eq:estgamx} on $\gamma_{\a,x}$ and Lemma \ref{prop:girPsi} below, which is a modified version of the exponential Giraud's Lemma, we obtain that
    \begin{equation}\label{eq:selfimprov}
        \abs{\intM{\Pea(x,z)\gamma_{\a,y_\a}(z)}(z)} \leq C_\epsilon' \a^{-N} \Pea(x,y_\a).
    \end{equation}
    Going back to \eqref{tmp:uxgametc} and dividing by $\Pea(x,y_\a)>0$, we have thus proven that
    \begin{equation}\label{eq:tmpups}
        \Upsilon_{\a,x} \leq C_\epsilon \a^{-N + \frac{n-2k}{2}} + C_\epsilon'\a^{-N} \Upsilon_{\a,x},
    \end{equation}
    with $N>n/2$ by definition of $N$. Hence, $\Upsilon_{\a,x} \to 0$ for all $x\in M$.
    \par We have thus shown, since $M$ is compact, that there exists $\a_0 \geq 0$ and a constant $C_\epsilon>0$ independent of $x$ such that for all $\a\geq \a_0$, $x,y \in M$,
    \begin{equation}\label{tmp:Peagir}  
        \abs{u_{\a,x}(y)} \leq C_\epsilon \begin{cases}
        1 & \sqa \dg{x,y} \leq 1\\
        e^{-(1-\epsilon)\sqa \dg{x,y}} & \sqa \dg{x,y} \geq 1\\
        e^{-(1-\epsilon)\sqa \inj/2} & \dg{x,y} \geq \inj/2.
    \end{cases}
        \end{equation}
    We now use this estimate to compute again the second term of \eqref{eq:repux}: Using Lemma \ref{prop:girPsi} with \eqref{tmp:Peagir}, and with \eqref{tmp:estepsGg*}, we finally get
    \[
        \abs{u_{\a,x}(y)} \leq C_\epsilon \a^{-k} \begin{cases}
            1 & \sqa \dg{x,y} \leq 1\\
            e^{-(1-\epsilon)\sqa \dg{x,y}} & \sqa \dg{x,y} \geq 1\\
            e^{-(1-\epsilon)\sqa \inj/2} & \dg{x,y} \geq \inj/2,
        \end{cases}
    \]
    since $-N +\frac{n-2k}{2} \leq -k$, which concludes the proof.
\end{proof}
\begin{remark}\label{rem:ageqa0}
    The assumption $\a \geq \a_0$, with $\a_0$ large, is crucial to obtain \eqref{tmp:Peagir} from \eqref{eq:tmpups}, and thus exponential decay for $u_{\a,x}$ when $\sqa\dg{x,y}\geq 1$.
\end{remark}
\begin{remark}
    With our approach, we cannot expect to obtain the exact decay $\expab{x,y}$ for $u_{\a,x}$ when $\sqa\dg{x,y} \geq 1$. Successive convolutions in the second term of the representation formula \eqref{eq:repux} add positive exponents of $\dg{x,y}$ that we cannot get rid of, see Remark \ref{rem:convGiraud}. We are thus forced to reduce the exponential decay of $u_{\a,x}$ to $e^{-(1-\epsilon)\sqa \dg{x,y}}$. This is what allows us to obtain \eqref{eq:selfimprov}.
\end{remark}

\subsection{Step 4: End of the proof of Theorem \ref{prop:bvorGga}.}
We can now proceed to conclude the proof of the main Theorem, putting the several pieces together.
\begin{lemma}\label{prop:estGga}
    Fix $\a \geq \a_0$ and let $G_{g,\a}$ be the Green's function for the operator $(\Dg+\a)^k$ in $M$ defined in Proposition \ref{prop:defGga}. Then for all $\epsilon \in (0,1)$, there exists $C_\epsilon>0$ such that for all $x \neq y$, we have
    \begin{equation}\label{eq:estGga}
        \abs{\Gga(x,y)} \leq C_\epsilon\begin{cases}
        \dg{x,y}^{-(n-2k)} & \sqa \dg{x,y} \leq 1\\
        \dg{x,y}^{-(n-2k)} e^{-(1-\epsilon)\sqa \dg{x,y}} & \sqa \dg{x,y} \geq 1\\
        e^{-(1-\epsilon)\sqa \inj/2} & \dg{x,y} \geq \inj/2
    \end{cases}.
        \end{equation}
\end{lemma}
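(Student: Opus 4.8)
The plan is to assemble the bounds already proved for the individual summands in the decomposition
\[
    \Gga(x,y) \;=\; \Gg^*(x,y) + u_{\a,x}(y) \;=\; \Gg(x,y) + \sum_{i=1}^{N-1}\Gg^i(x,y) + u_{\a,x}(y),
\]
given by \eqref{def:Gga} and \eqref{def:Gg*}, and to check the three regimes of \eqref{eq:estGga} one at a time. For $\sqa\dg{x,y}\leq1$ there is essentially nothing to do: \eqref{eq:estG*} (equivalently \eqref{eq:decompGg} with \eqref{eq:estsumGi}) already gives $\abs{\Gg^*(x,y)}\leq C\dg{x,y}^{-(n-2k)}$, and since $\abs{u_{\a,x}(y)}\leq C_\epsilon\a^{-k}\leq C_\epsilon$ by Proposition \ref{prop:bvorux} while $\dg{x,y}^{-(n-2k)}\geq\a^{(n-2k)/2}\geq1$ (as $\a\geq\a_0\geq1$), the $u_{\a,x}$ term is absorbed. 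For $\dg{x,y}\geq\inj/2$ I would use that $\Gg$ is supported in $\Bal{x}{\tau_0}$ and each $\Gg^i$ in $\Bal{x}{(i+1)\tau_0}$, with $N\tau_0<\inj/2$; hence $\Gg^*(x,\cdot)$ vanishes there, so $\Gga(x,y)=u_{\a,x}(y)$ and the third line of \eqref{eq:estux} gives the bound $C_\epsilon\a^{-k}e^{-(1-\epsilon)\sqa\inj/2}\leq C_\epsilon e^{-(1-\epsilon)\sqa\inj/2}$.

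The substance is the regime $\sqa\dg{x,y}\geq1$, and the device I would use throughout is to set $t:=\sqa\dg{x,y}\geq1$, write $\dg{x,y}=t/\sqa$, split $\expab{x,y}=\expsqa{x,y}\,e^{-\epsilon t}$, and invoke $\sup_{t\geq1}t^{q}e^{-\epsilon t}<\infty$ for any $q\in\R$. For $\Gg$ I start from \eqref{eq:estGg}, multiply and divide by $\dg{x,y}^{n-2k}$, and substitute $\dg{x,y}=t/\sqa$; the exponent identity $\tfrac{n(k-2)+k}{2}+(n-2k)=\tfrac{k(n-3)}{2}$ makes the net power of $\a$ vanish, leaving $\abs{\Gg(x,y)}\leq C\dg{x,y}^{-(n-2k)}t^{k(n-3)/2}e^{-\epsilon t}\expsqa{x,y}\leq C_\epsilon\dg{x,y}^{-(n-2k)}\expsqa{x,y}$. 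The same manipulation applied to each $\Gg^i$ via \eqref{eq:estTG} and the relation $2p_i-\rho_i=n-2k-2i$ from \eqref{eq:defpiri} leaves a net power $p_i-\tfrac12(\rho_i+n-2k)=-i\leq0$ of $\a$, so $\abs{\Gg^i(x,y)}\leq C_\epsilon\a^{-i}\dg{x,y}^{-(n-2k)}\expsqa{x,y}$ and the sum over $i$ is harmless. Lastly, Proposition \ref{prop:bvorux} gives $\abs{u_{\a,x}(y)}\leq C_\epsilon\a^{-k}\expsqa{x,y}$, and since $\dg{x,y}^{-(n-2k)}\geq\diam(M)^{-(n-2k)}$ by compactness while $\a^{-k}\leq1$, this too is $\leq C_\epsilon\dg{x,y}^{-(n-2k)}\expsqa{x,y}$. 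Summing the three contributions gives the middle line of \eqref{eq:estGga}.

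I expect the only real obstacle to be the bookkeeping in this last regime --- namely verifying that the positive powers of $\a$ carried by the Euclidean-type estimates \eqref{eq:estGg} and \eqref{eq:estTG} for $\Gg$ and $\Gg^i$ are cancelled exactly by the rescaling $t=\sqa\dg{x,y}$, which is precisely what the identities $\tfrac{n(k-2)+k}{2}+(n-2k)=\tfrac{k(n-3)}{2}$ and $2p_i-\rho_i=n-2k-2i$ encode. Everything else relies only on the compactness of $M$ and the elementary bound $\sup_{t\geq1}t^{q}e^{-\epsilon t}<\infty$.
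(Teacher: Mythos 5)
Your proof is correct and follows essentially the same decomposition and case analysis as the paper's: bound $\Gg$, each $\Gg^i$, and $u_{\a,x}$ separately in the three regimes and add. The exponent identities you verify ($\tfrac{n(k-2)+k}{2}+(n-2k)=\tfrac{k(n-3)}{2}$ and $p_i-\tfrac12(\rho_i+n-2k)=-i$) are exactly what the paper relies on, just made explicit via the substitution $t=\sqa\dg{x,y}$. Two minor points where your write-up is actually tighter than the paper's: in the near regime the paper cites \eqref{eq:1stestux}, whose constant is $\a$-dependent, whereas you correctly invoke the $\a$-uniform bound of Proposition \ref{prop:bvorux}; and for $u_{\a,x}$ in the intermediate regime you use compactness of $M$ to absorb the factor $\dg{x,y}^{n-2k}$, while the paper passes through an auxiliary $\epsilon'<\epsilon$ and absorbs it into the exponential --- both are fine and give the same conclusion.
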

\begin{proof}
    Fix any $0 < \epsilon < 1$. First, let $x,y \in M$ be such that $\dg{x,y} \leq 1/\sqa$ and $x\neq y$. Going back to the definition of $\Gga$ in \eqref{def:Gga}, we use \eqref{eq:estGg}, \eqref{eq:estTGsqa} and the fact that $u_x \in C^0(M)$ satisfies \eqref{eq:1stestux}, and we obtain 
    \[  \abs{\Gga(x,y)} \leq C \dg{x,y}^{-(n-2k)}
        \]
    for all $\dg{x,y} \leq 1/\sqa$.
    \par Now let $1\sqa \leq \dg{x,y} < \inj/2$. We use \eqref{eq:estGg} and \eqref{eq:estTG}, there is a constant $C_\epsilon$ independent of $\a$, $x,y$ such that
    \[  \abs{\Gg^i(x,y)} \leq C_\epsilon \a^{-i} \dg{x,y}^{-(n-2k)}e^{-(1-\epsilon)\sqa \dg{x,y}}
        \]
    for $i= 0,\ldots N-1$, writing $\Gg^0 := \Gg$. Up to taking a slightly smaller $0 < \epsilon' < \epsilon$, we also have for all $y \in M$
    \begin{align*}  
        \abs{u_{\a,x}(y)} &\leq C_{\epsilon'} \a^{-k} e^{-(1-\epsilon')\sqa \dg{x,y}}\\
            &\leq C_{\epsilon,\epsilon'} \a^{-\frac{n}{2}}\dg{x,y}^{-(n-2k)}e^{-(1-\epsilon)\sqa \dg{x,y}},
    \end{align*}
    using \eqref{eq:estux}.
    We have thus obtained 
    \[  \abs{\Gga(x,y)} \leq C_\epsilon \dg{x,y}^{-(n-2k)} e^{-(1-\epsilon)\sqa\dg{x,y}}.
        \]
    Finally, when $\dg{x,y} \geq \inj/2$, the estimate follows from \eqref{eq:estux}.
\end{proof}

\begin{lemma}\label{prop:Ggasym}
    Fix $\a > \a_0$ and let $\Gga$ be the Green's function for the operator $(\Dg+\a)^k$ in $M$ defined in Proposition \ref{prop:defGga}. Then for all $x\neq y$ in $M$, $\Gga(x,y) = \Gga(y,x)$.
\end{lemma}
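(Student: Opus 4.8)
The plan is to deduce the symmetry of $\Gga$ from the formal self-adjointness of $(\Dg+\a)^k$ combined with the representation formula of Proposition \ref{prop:defGga}. First I would fix arbitrary $\phi,\psi \in C^\infty(M)$ and take $u,v \in C^\infty(M)$ to be the unique solutions of $(\Dg+\a)^k u = \phi$ and $(\Dg+\a)^k v = \psi$; these exist and are smooth by the coercivity of the operator and elliptic regularity, as recalled after \eqref{tmpdgu}. Since $\phi,\psi$ belong to $L^q(M)$ for every $q$, Proposition \ref{prop:defGga} yields the representation formulas
\[
    u(x) = \intM{\Gga(x,y)\phi(y)} \quad \text{and} \quad v(x) = \intM{\Gga(x,y)\psi(y)} \qquad \text{for all } x \in M.
\]

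Next, integrating by parts on the closed manifold $M$ (there are no boundary terms, and $u,v$ are smooth) gives
\[
    \intM{u\,\psi} = \intM{u\,(\Dg+\a)^k v} = \intM{(\Dg+\a)^k u \cdot v} = \intM{\phi\,v}.
\]
Substituting the two representation formulas into both sides and applying Fubini's theorem, one obtains
\[
    \int_M \Big( \int_M \Gga(x,y)\,\phi(y)\, dv_g(y)\Big)\psi(x)\, dv_g(x) = \int_M \Big( \int_M \Gga(x,y)\,\psi(y)\, dv_g(y)\Big)\phi(x)\, dv_g(x).
\]
Relabelling the variables $x \leftrightarrow y$ in the right-hand integral and subtracting, this reads
\[
    \int_M \int_M \bpr{\Gga(x,y) - \Gga(y,x)}\,\phi(y)\,\psi(x)\, dv_g(y)\, dv_g(x) = 0
\]
for every $\phi,\psi \in C^\infty(M)$. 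Since functions of the form $\psi(x)\phi(y)$ span a dense subspace of $C^0(M\times M)$, this forces $\Gga(x,y) - \Gga(y,x) = 0$ for $(dv_g\otimes dv_g)$-almost every $(x,y)$; as $\Gga$ is continuous (indeed smooth) on $M\times M\setminus \Diag$ by Remark \ref{rem:smthGr}, and the diagonal is $(dv_g\otimes dv_g)$-null, the identity $\Gga(x,y) = \Gga(y,x)$ holds at every point with $x\neq y$.

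The only step requiring genuine care — and the main potential obstacle — is the legitimacy of Fubini's theorem, i.e. the joint integrability over $M\times M$ of $(x,y)\mapsto \Gga(x,y)\phi(y)\psi(x)$. This follows from the near-diagonal bound $\abs{\Gga(x,y)} \leq C\dg{x,y}^{2k-n}$ of Lemma \ref{prop:estGga} (together with its bounded/exponentially decaying behaviour away from the diagonal), the fact that $n-2k>0$ so that $y\mapsto \dg{x,y}^{2k-n}$ is integrable on $M$ with a bound uniform in $x$, the compactness of $M$, and the boundedness of $\phi$ and $\psi$.
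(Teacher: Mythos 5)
Your proof is correct, and it takes a genuinely different and in fact more streamlined route than the paper. You run the classical ``two test functions'' self-adjointness argument: pick arbitrary $\phi,\psi\in C^\infty(M)$, solve $(\Dg+\a)^k u=\phi$ and $(\Dg+\a)^k v=\psi$, integrate by parts to get $\int u\psi=\int \phi v$, substitute the representation formula from Proposition \ref{prop:defGga} into both sides, and use Fubini plus density of products $\phi\otimes\psi$ to conclude the kernel is symmetric almost everywhere, hence everywhere off-diagonal by continuity. The paper instead works with a single test function $\phi$, forms the auxiliary function $h(y)=\intM{\Gga(z,y)\phi(z)}(z)$, proves its continuity directly by a dominated-convergence argument, introduces a second auxiliary solution $\psi_0$ of $(\Dg+\a)^k\psi_0=h-g$, and after a Fubini computation and integration by parts concludes $\int(h-g)^2=0$. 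Both approaches rely on the same two facts — the representation formula of Proposition \ref{prop:defGga} and the $L^1$ bound $\abs{\Gga(x,y)}\leq C\dg{x,y}^{2k-n}$ needed to justify Fubini — but your route avoids having to prove continuity of $y\mapsto\intM{\Gga(z,y)\phi(z)}(z)$ directly and avoids the introduction of $\psi_0$. The one place you could be slightly more explicit is the final density step: linear combinations of products $\phi(y)\psi(x)$ are dense in $C^0(M\times M)$ by Stone--Weierstrass, and since $(x,y)\mapsto\Gga(x,y)-\Gga(y,x)$ lies in $L^1(M\times M)$, annihilation by all such products (and hence, by dominated convergence, by all continuous test functions) does force vanishing almost everywhere — this is the standard fundamental-lemma argument, and it is worth a sentence. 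With that spelled out, your proof is complete and arguably cleaner than the one in the paper.
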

\begin{proof}
    Let $\phi \in C^\infty(M)$, define for $y \in M$
    \[  h(y) := \intM{\Gga(z,y) \phi(z)}(z).
        \]
    Then since $\Gga(\cdot, y)\in C^\infty(M \setminus \{y\}) \cap L^1(M)$, $h$ is well-defined. We claim that $h$ is continuous on $M$. To prove the latter, fix $y\in M$ and take a sequence of points in $M$, $y_m \to y$ as $m\to \infty$. Let $\delta_m := \dg{y_m,y}$, we have 
    \[  h(y_m) = \intM[\Bal{y_m}{\delta_m/2}]{\Gga(z,y_m)\phi(z)}(z) + \intM[M\setminus\Bal{y_m}{\delta_m/2}]{\Gga(z,y_m)\phi(z)}(z).
        \]
    On the one hand, by \eqref{eq:estGga}, we have 
    \[  \intM[\Bal{y_m}{\delta_m/2}]{G(y_m,z)\phi(z)}(z) = o(1)
        \]
    as $m \to \infty$. On the other hand, for $z \in M\setminus \Bal{y_m}{\delta_m/2}$, we have $\dg{z,y_m} \geq \frac{1}{3}\dg{z,y}$, so that using \eqref{eq:estGga}, 
    \[  \abs{G(y_m,z)\phi(z)} \leq C \dg{z,y}^{2k-n}.
        \]
    By dominated convergence, we conclude that 
    \begin{align*}  
        \lim_{m\to \infty} h(y_m) &= \lim_{m\to \infty} \intM[M\setminus\Bal{y_m}{\delta_m/2}]{\Gga(z,y_m)\phi(z)}(z)\\
            &= \intM{\Gga(z,y)\phi(z)}(z),
        \end{align*}
    and $h$ is continuous at $y\in M$.
    \par Let now $g \in C^\infty(M)$ be the unique solution to $(\Dg+\a)^k g = \phi$ on $M$. Since $\Gga$ is the Green's function for the operator $(\Dg+\a)^k$ in $M$, we have, for all $x\in M$,
    \[  g(x) = \intM{\Gga(x,y) \phi(y)}(y).
        \]
    Now since $h\in C^0(M)$, $g\in C^\infty(M)$, one has $h-g \in L^\infty(M)$ and there is a unique $\psi_0 \in \Sob(M)$ such that
    \[  (\Dg+\a)^k \psi_0 = h - g \quad \text{in the weak sense on $M$.}
        \]
    Moreover, elliptic theory gives $\psi_0 \in \Sob[2k,p](M)$ for all $p\geq 1$. By Fubini's theorem, and since $\Gga[x] \in L^1(M)$ by Proposition \ref{prop:defGga}, we have
    \begin{align*}
        \intM{h\, (\Dg+\a)^k \psi_0} &= \intM{\bpr{\intM{\Gga(x,y)\phi(x)}(x)}(\Dg+\a)^k \psi_0(y)}(y)\\
            &= \intM{\bpr{\intM{\Gga(x,y)(\Dg+\a)^k \psi_0(y)}(y)}\phi(x)}(x)\\
            &= \intM{\psi_0(x) \phi(x)}(x).
    \end{align*}
    Use the definition of $g$ and integrate by parts: Since $\psi_0 \in \Sob[2k,p](M)$ for any $p\geq 1$,
    \[
        \intM{\psi_0 \phi} = \intM{\psi_0 (\Dg+\a)^k g} = \intM{(\Dg+\a)^k \psi_0 g}.
    \]
    By definition of $\psi_0$, we have thus shown in the end that
    \[  \intM{(\Dg+\a)^k \psi_0 \big(h-g\big)} = 0 = \intM{\big(h-g\big)^2},
        \]
    so that $h(x) = g(x)$ for almost every $x \in M$, and thus $h \equiv g$ on $M$ by continuity of $h, g$. This shows that for all $\phi \in C^\infty(M)$, and all $x\in M$,
    \[  \intM{\bpr{\Gga(x,y)- \Gga(y,x)}\phi(y)}(y) = 0.
        \]
    Since $\Gga(x, \cdot),\, \Gga(\cdot,x) \in L^1(M)$ for all $x \in M$, we deduce that 
    \[  \Gga(x,y) = \Gga(y,x)
        \] 
    for almost every $y \in M$, and we conclude with the continuity of $\Gga$ in $M\times M \setminus\Diag$.
\end{proof}

\begin{proof}(of Theorem \ref{prop:bvorGga})
    Recall the definition \eqref{def:Gga} of $\Gga$, we have
    \[  \Gga(x,y) = \Gg^*(x,y) + u_{\a,x}(y),
        \]
    where $\Gg^*$ was defined in \eqref{def:Gg*}, and $u_{\a,x}$ is the unique solution to \eqref{eq:equx}. It is a Green's function for $(\Dg + \a)^k$ in $M$ as we proved in Proposition \ref{prop:defGga}, and we have proved the estimates on $\Gga$ in Lemma \ref{prop:estGga}. Moreover, when $\dg{x,y} \leq 1/\sqa$, by Proposition \ref{prop:defgamx}, $\Gg^*(x,y)$ satisfies \eqref{eq:estG*}, so that $\Gga(x,y)$ satisfies \eqref{eq:bvorGg}, since $u_{\a,x}$ is bounded.
    \par Uniqueness was proved in Lemma \ref{prop:uniqpos}, while the symmetry was proved in Lemma \ref{prop:Ggasym}. It remains only to show the positivity of the Green's function. 
    \par Let $H_{g,\a}^{(1)}$ be the Green's function for the operator $\Dg + \a$ in $M$ as constructed in Proposition \ref{prop:defGga}, and define for all $x\neq y$, as in \eqref{def:Gok}, 
    \[  H_{g,\a}^{(k)}(x,y) := \intM{H_{g,\a}^{(k-1)}(x,z) H_{g,\a}^{(1)}(z,y)}(z)
        \]
    for $k\geq 2$. This function is well-defined provided that $n>2k$, and smooth away from the diagonal, thanks to Lemma \ref{prop:rexpGir}. The same argument as in \eqref{eq:gkisgreen} shows that $H_{g,\a}^{(k)}$ is a Green's function for the operator $(\Dg+\a)^k$ in $M$. By Lemma \ref{prop:uniqpos}, $H_{g,\a}^{(k)} = \Gga$ in $M\times M \setminus \Diag$.
    Now, recall that $H_{g,\a}^{(1)}$ is positive by the strong maximum principle and Hopf Lemma (see \cite[Theorem 3.]{Rob10}). Thus, $\Gga= H_{g,\a}^{(k)}$ is positive in $M\times M \setminus \Diag$.
\end{proof}
\begin{remark}
    Note that the factorized form of $(\Dg+\a)^k$ is crucial to obtain positivity. In general, the question of positivity of for higher-order operators is a hard problem. We refer to \cite{GazGruSw10,GruRob10, GruSw96} for positivity results for polyharmonic operators on domains of $\R^n$ with specific boundary conditions. The argument that we describe here for the positivity would work for any polyharmonic operator which is decomposed as the product of coercive operators of order 2, such as the GJMS operator in a Riemannian manifold with Einstein metric (see \cite{FefGra12}).
\end{remark}

\subsection{Control on the derivatives.}
We are now interested in estimates similar to \eqref{eq:estGga} for the derivatives of $\Gga$. 
Let $f \in C^\infty(M)$, then the $l^{\text{th}}$ covariant derivative $\nabla^l f \in (T^*M)^l$ is a $(l,0)$-tensor. 
Fix $x \in M$, on a neighborhood $\Bal{x}{\tau}$ with $\tau < \inj$, the exponential map at $x$ allows us to define the metric on $\Bal{0}{\tau} \subset \R^n$,
\begin{equation}\label{def:gTil}
    \Tilde{g}(u) := \exp_x^* g(u).
    \end{equation}
Since $M$ is compact, there is a global constant $C>0$ independent of $x\in M$ such that for $u \in \Bal{0}{\tau}$,
\begin{equation}\label{eq:rimder}
    \begin{bigcases}
        &\abs{\Tilde{g}(u)_{ij} - \delta_{ij}} \leq  C\abs{u}^2 & 1 \leq i,j \leq n\\
        &\abs{\nabla_{\Tilde{g}}^l\Tilde{f}(u) - \nabla_\xi^l \Tilde{f}(u)} \leq C \bpr{\abs{u}\abs{\nabla_{\xi}^{l-1} \Tilde{f}(u)} + \sum_{m=1}^{l-2} \abs{\nabla_{\xi}^m \Tilde{f}(u)} }\hspace{-1em} & l\geq 2
    \end{bigcases}
\end{equation}
where $\Tilde{f} := f\circ \exp_x$, and $\Tilde{g}(u)_{ij}$ are the components of $\Tilde{g}$ at $u$ in the exponential chart.

\begin{proposition}\label{prop:bvorDux}
    Let $\a_0\geq 1$ be given by Proposition \ref{prop:bvorux}. For all $0<\epsilon <1$, there is a constant $C_\epsilon>0$ such that for all $\a\geq \a_0$, for all $x, y \in M$ and for $l=1, \ldots 2k-1$, we have
    \[  \abs{\nabla^l u_{\a,x}}_{g}(y) \leq C \a^{-k + \frac{l}{2}} \begin{cases}
        1 & \sqa \dg{x,y} \leq 1\\
        e^{-(1-\epsilon)\sqa \dg{x,y}} & \sqa \dg{x,y} \geq 1\\
        e^{-(1-\epsilon)\sqa \inj/2} & \dg{x,y} \geq \inj/2
    \end{cases}.
        \]
\end{proposition}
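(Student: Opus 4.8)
The plan is to read the derivative bounds directly off the equation \eqref{eq:equx} satisfied by $u_{\a,x}$, by applying interior elliptic estimates after rescaling the exponential chart around $y$ at the natural length scale $1/\sqa$ --- the same device used in Lemma \ref{prop:decL1} and Proposition \ref{prop:bvorDGak}. Since the right-hand side $\gamma_{\a,x}=\Gam[x]{N}$ is only continuous on $M$, this yields control of $u_{\a,x}$ up to $C^{2k-1,\theta}$, which is exactly the range $l=1,\dots,2k-1$ appearing in the statement; controlling $\nabla^{2k}u_{\a,x}$ would require more regularity of $\gamma_{\a,x}$.

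Fix $\epsilon\in(0,1)$, $x\in M$ and $y\in M$, and enlarge $\a_0$ so that $1/\sqrt{\a_0}<\inj/4$. Working in the exponential chart at $y$, set $\Tilde{g}:=\exp_y^*g$, $\lambda:=1/\sqa$, and $\Tilde{g}_\lambda(w):=\Tilde{g}(\lambda w)$; since $g$ is a fixed smooth metric on the compact manifold $M$, the metrics $\Tilde{g}_\lambda$, together with all their derivatives, are bounded on $\Bal{0}{1}$ uniformly in $y\in M$ and $\a\geq\a_0$, and converge to the Euclidean metric as $\a\to\infty$, so $(\Dg[\Tilde g_\lambda]+1)^k$ is uniformly elliptic with uniformly bounded coefficients. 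With $v(w):=u_{\a,x}(\exp_y(\lambda w))$ and $h(w):=\gamma_{\a,x}(\exp_y(\lambda w))$, the scaling relation $\Dg[\Tilde g](f)(\lambda w)=\a\,\Dg[\Tilde g_\lambda]\bpr{f(\lambda\,\cdot)}(w)$ turns \eqref{eq:equx} into $(\Dg[\Tilde g_\lambda]+1)^k v=\a^{-k}h$ on $\Bal{0}{1}$. Applying interior $L^p$ estimates for this operator with a fixed $p>n$, followed by the Sobolev embedding $\Sob[2k,p]\emb C^{2k-1,\theta}$, I would obtain
\[  \norm{v}_{C^{2k-1}(\Bal{0}{1/2})}\leq C\bpr{\Lnorm[(\Bal{0}{1})]{p}{v}+\a^{-k}\Lnorm[(\Bal{0}{1})]{p}{h}}   \]
with $C$ independent of $y$ and of $\a\geq\a_0$.

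The inputs are then the pointwise bounds already proved. Proposition \ref{prop:bvorux} gives $\abs{u_{\a,x}}\leq C_\epsilon\,\a^{-k}\Pea(x,\cdot)$, and \eqref{eq:estgamx}, combined with $t^{p_{n,k}}e^{-\epsilon t}\leq C_\epsilon$, gives $\abs{\gamma_{\a,x}}\leq C_\epsilon\,\a^{-N+n/2}\Pea(x,\cdot)$, where $\Pea$ is the comparison function \eqref{eq:defPea}. Since $1/\sqa<\inj/4$, a case check in \eqref{eq:defPea} shows $\Pea(x,z)\leq e^{1-\epsilon}\,\Pea(x,y)$ for every $z\in\Bal{y}{1/\sqa}$; hence $\Lnorm[(\Bal{0}{1})]{p}{v}\leq C_\epsilon\,\a^{-k}\Pea(x,y)$ and, because $-N+n/2<0$, $\a^{-k}\Lnorm[(\Bal{0}{1})]{p}{h}\leq C_\epsilon\,\a^{-k}\Pea(x,y)$, so $\norm{v}_{C^{2k-1}(\Bal{0}{1/2})}\leq C_\epsilon\,\a^{-k}\Pea(x,y)$. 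Finally I would undo the rescaling: an $m$-th order chart derivative satisfies $\abs{\partial^m_u(u_{\a,x}\circ\exp_y)(0)}=\a^{m/2}\abs{\partial^m_w v(0)}$, and writing the covariant derivative $\nabla^l u_{\a,x}(y)$ in terms of the chart derivatives of $u_{\a,x}\circ\exp_y$ at $0$ and the Christoffel symbols of $g$ at $y$ (bounded, with all derivatives, uniformly over $M$) gives
\[  \abs{\nabla^l u_{\a,x}}_g(y)\leq C\sum_{m=1}^l\abs{\partial^m_u(u_{\a,x}\circ\exp_y)(0)}\leq C_\epsilon\,\a^{l/2-k}\Pea(x,y)   \]
for $l=1,\dots,2k-1$, which is the claimed estimate once the three cases of $\Pea(x,y)$ are written out (and, if needed, $\epsilon$ is decreased slightly).

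The one genuinely delicate point --- what separates this from a routine elliptic bootstrap --- is the bookkeeping of the $\a$-dependence: one must rescale \emph{precisely} at the scale $1/\sqa$ so that the zeroth-order coefficient of the rescaled operator is normalized to $1$ while the rescaled metric stays uniformly close to Euclidean, and then correctly track the factor $\a^{1/2}$ that each derivative picks up under this rescaling, together with the local comparability $\Pea(x,z)\leq C\,\Pea(x,y)$ on $\Bal{y}{1/\sqa}$. Everything else is standard interior regularity on balls of uniformly bounded geometry.
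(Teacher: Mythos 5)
Your proposal is correct and follows essentially the same strategy as the paper: rescale the exponential chart at $y$ by the factor $1/\sqa$ so that \eqref{eq:equx} becomes $(\Dg[\Tilde g_\lambda]+1)^k v=\a^{-k}h$ on $\Bal{0}{1}$ with uniformly controlled coefficients, bound $\norm{v}_{C^{2k-1}(\Bal{0}{1/2})}$ by the sup-norms of $v$ and $\a^{-k}h$ via interior elliptic estimates, feed in Proposition \ref{prop:bvorux} and \eqref{eq:estgamx} together with the local comparability $\Pea(x,z)\leq C\Pea(x,y)$ on $\Bal{y}{1/\sqa}$, and finally pick up a factor $\a^{m/2}$ per derivative when undoing the rescaling. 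The paper phrases the elliptic step as a $C^{2k-1,\theta}$ bound controlled by $\Lnorm{\infty}{\tilde u}+\Lnorm{\infty}{\a^{-k}\tilde\gamma}$ and uses \eqref{eq:rimder} rather than Christoffel symbols to pass from chart to covariant derivatives, but these are cosmetic differences: the two arguments are the same.
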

\begin{proof}
    Let us fix $x\in M$ and let $y \in M$. We prove the estimates on the derivatives of $u_{\a,x}$ by using elliptic estimates in balls centered at $y$. Observe that for $z\in \Bal{y}{1/\sqa}$,
    \[  \dg{x,y} -1/\sqa \leq \dg{x,z} \leq \dg{x,y} + 1/\sqa.
        \]
    With this realization, and with Proposition \ref{prop:bvorux}, we obtain
    \begin{equation}\label{tmp:estuxy}
    \begin{aligned}  \abs{u_{\a,x}(z)} &\leq C_\epsilon \a^{-k} \begin{cases}
        1 & \sqa \dg{x,y} \leq 1\\
        e^{-(1-\epsilon)\sqa \dg{x,y}} & \sqa \dg{x,y} \geq 1\\
        e^{-(1-\epsilon)\sqa \inj/2} & \dg{x,y} \geq \inj/2
    \end{cases}\\
        &= C_\epsilon \a^{-k} \Pea(x,y)
        \end{aligned}
    \end{equation}
    for all $z\in \Bal{y}{1/\sqa}$, where $\Pea$ is as in \eqref{eq:defPea}. We also have by \eqref{eq:estgamx} that there exists a constant $C_\epsilon'>0$ independent of $x,y$ and $\a\geq \a_0$ such that 
    \begin{equation}\label{tmp:estgamxy}
        \abs{\gamma_{\a,x}(z)} \leq C_\epsilon' \a^{-N+\frac{n}{2}} \Pea(x,y)
        \end{equation}
    for all $z\in\Bal{y}{1/\sqa}$. 
    \par Set $v := \sqa \exp^{-1}_y(z)$, then if $z \in \Bal{y}{1/\sqa}$, $v\in \Bal{0}{1} \subset \R^n$. For $\a\geq \a_0$, we have $1/\sqa < \inj$, define
    \[  \Tilde{g}_{\a,y}(v) := \exp_y^* g(v/\sqa) \qquad \text{and} \qquad \begin{cases}
        \Tilde{u}_{\a,x,y}(v) := u_{\a,x}(\exp_y(v/\sqa)) \\
        \Tilde{\gamma}_{\a,x,y}(v) := \gamma_{\a,x}(\exp_y(v/\sqa))
    \end{cases}.
        \]
    Observe that $\Tilde{\gamma}_{\a,x,y} \in C^0(\Bal{0}{1})$ since $\gamma_{\a,x} \in C^0(M)$, and that $\Tilde{u}_{\a,x,y} \in L^\infty(\Bal{0}{1})$. We compute, for all $v\in \Bal{0}{1}$,
    \[  \Dg u_{\a,x}(\exp_y(v/\sqa)) = \a \Dg[\Tilde{g}_{\a,y}]\Tilde{u}_{\a,x,y}(v)
        \]
    so that $\Tilde{u}_{\a,x,y}$ solves the following equation on $\Bal{0}{1}$,
    \[  \a^k (\Dg[\Tilde{g}_{\a,y}] + 1)^k \Tilde{u}_{\a,x,y} = \Tilde{\gamma}_{\a,x,y}.
        \]
    Using the fact that when $\a \to \infty$, $\Tilde{g}_{\a,y} \to \xi$ the Euclidean metric in $C^{\infty}_{loc}(\R^n)$, $(\Dg[\Tilde{g}_{\a,y}]+1)^k$ is an elliptic operator with coefficients bounded independently of $\a \geq \a_0$. Elliptic theory gives $\Tilde{u}_{\a,x,y} \in C^{2k-1, \theta}(\Bal{0}{1/2})$ for all $\theta \in (0,1)$. There is a constant $C>0$ that does not depend on $\a$, $x,y$, such that for $l=1,\ldots 2k-1$ and all $v \in \Bal{0}{1/2}$ we have
    \begin{align*}
        \abs{\nabla^l \Tilde{u}_{\a,x,y}(v)} &\leq C (\norm{\Tilde{u}_{\a,x,y}}_{L^\infty(\Bal{0}{1})} + \norm{\a^{-k}\Tilde{\gamma}_{\a,x,y}}_{C^{0}(\Bal{0}{1})})\\
            &\leq C \a^{-k} \Pea(x,y),
    \end{align*}
    using \eqref{tmp:estuxy} and \eqref{tmp:estgamxy}.
    Note that here the gradient and the norm are taken in the Euclidean space $\R^n$. To get the metric-related quantities we use \eqref{eq:rimder}, for all $v \in \Bal{0}{1/2}$, we have
    \begin{align*}
        \abs{\nabla_g^l u_{\a,x}}_g(\exp_y(v/\sqa)) &\leq C\abs{\nabla_{\Tilde{g}_{\a,y}}^l \Tilde{u}_{\a,x,y}(v)} \leq C\sum_{m=1}^{l} \a^{\frac{m}{2}} \abs{\nabla^m \Tilde{u}_{\a,x,y}(v)}\\
            &\leq C\a^{\frac{l}{2}-k} \Pea(x,y).
        \end{align*}
    Taking this inequality at $v = 0 \in \Bal{0}{1/2}$, then $\exp_y(0) = y$, and we conclude.
\end{proof}

The following Proposition extends Corollary \ref{prop:betbvorDGak} to the Riemannian case, showing that the pointwise decomposition of $\Gga$ in \eqref{eq:bvorGg} can be differentiated formally. 

\begin{proposition}\label{prop:bvorDgGga}
    Fix $0<\epsilon<1$, there exists $\a_0\geq 1$ such that for all $\a \geq \a_0$, the derivatives of $\Gga$ have the following estimates. There exists a constant $C>0$ independent of $\a$ such that for all $x\neq y \in M$, and for $l = 1,\ldots 2k-1$, $x\neq y$, 
    \[  \abs{\nabla^l \Gga[x](y)}_g \leq C \begin{cases}
        \dg{x,y}^{-(n-2k + l)}  & \sqa \dg{x,y} \leq 1\\
        \dg{x,y}^{-(n-2k + l)} e^{-(1-\epsilon)\sqa \dg{x,y}} & \sqa\dg{x,y} \geq 1\\
        e^{-(1-\epsilon)\sqa \inj/2} & \dg{x,y} \geq \inj/2
    \end{cases}.
        \]
    Moreover, for $\dg{x,y} \leq 1/\sqa$ with $x\neq y$,
    \[  \abs{\nabla^l \bpr{\dg{x,y}^{n-2k} \Gga[x](y)}}_g \leq C\dg{x,y}^{-l}\eta(\sqa\dg{x,y}),
        \]
    where $\eta$ is defined in \eqref{def:eta}.
\end{proposition}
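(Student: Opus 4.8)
The plan is to differentiate the explicit decomposition
\[
    \Gga(x,y)=\Gg(x,y)+\sum_{i=1}^{N-1}\Gg^i(x,y)+u_{\a,x}(y)
\]
coming from \eqref{def:Gg*}--\eqref{def:Gga} term by term, using the Euclidean derivative bounds of Proposition \ref{prop:bvorDGak} and Corollary \ref{prop:betbvorDGak} for the leading piece, the exponential Giraud Lemma \ref{prop:rexpGir} for the convolution pieces, and Proposition \ref{prop:bvorDux} for the remainder. As in Lemma \ref{prop:estGga}, one argues in the three regimes $\sqa\dg{x,y}\leq 1$, $1/\sqa\leq\dg{x,y}<\inj/2$ and $\dg{x,y}\geq\inj/2$, and each spurious power of $\a$ appearing along the way is reabsorbed through the elementary inequality $(\sqa\dg{x,y})^{p}e^{-\epsilon\sqa\dg{x,y}}\leq C_\epsilon$ together with the boundedness of $\dg{x,y}$, exactly as in the proof of Lemma \ref{prop:estGga}.

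\emph{First estimate.} For $\nabla^l\Gg[x](y)$ one fixes normal coordinates $u=\exp_x^{-1}(y)$ at $x$; where $\sqa\dg{x,y}\leq1$ one has $\chi\equiv1$, so $\Gg[x]$ pulls back to $\Ga[0]{k}(u)$, and Proposition \ref{prop:bvorDGak} together with the comparison \eqref{eq:rimder} between $\nabla_{\Tilde{g}}$ and $\nabla_\xi$ gives $\abs{\nabla^l\Gg[x](y)}_g\leq C\dg{x,y}^{-(n-2k+l)}$, every correction term in \eqref{eq:rimder} being of strictly lower order; the derivatives of the cut-off $\chi$ only enter where $\dg{x,y}\sim\tau_0$, a region contained in $\{\sqa\dg{x,y}\geq1\}$ for $\a\geq\a_0$, hence they contribute exponentially small terms. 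For each $\nabla^l\Gg^i(x,y)$, $1\leq i\leq N-1$, one differentiates \eqref{eq:defggi} under the integral sign — legitimate precisely because $l\leq 2k-1$ keeps $z\mapsto\nabla^l_{(y)}\Gg(z,y)$ locally integrable near $z=y$ — and applies Lemma \ref{prop:rexpGir} with the pointwise bounds on $\Gam{i}$ and on $\nabla^l_{(y)}\Gg$; since $i\geq1$ the result is strictly less singular than the leading term and carries an extra factor $\a^{-i}$, so it is absorbed. Finally, $\abs{\nabla^l u_{\a,x}}_g$ is bounded by $C\a^{-k+l/2}\Pea(x,y)$ in Proposition \ref{prop:bvorDux}, which for $\a\geq\a_0\geq1$ is dominated by the claimed bound on each regime (absorbing $\a^{-k+l/2}$ as above and relabelling $\epsilon$). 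Summing the three contributions yields the first estimate.

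\emph{Refined estimate.} A brute Leibniz expansion from the first estimate only gives $\dg{x,y}^{-l}$, missing the decaying factor $\eta$, so one must keep the decomposition. In normal coordinates at $x$ and for $\sqa\dg{x,y}\leq1$ one has $\dg{x,y}^{n-2k}\Gg[x](y)=\abs{u}^{n-2k}\Ga[0]{k}(u)$ with $\abs{u}=\dg{x,y}$, whose covariant derivatives are bounded by $C\dg{x,y}^{-l}\eta(\sqa\dg{x,y})$ by Corollary \ref{prop:betbvorDGak} combined with \eqref{eq:rimder}. For the $\Gg^i$ terms one applies Leibniz to the prefactor $\dg{x,y}^{n-2k}$, differentiates the convolution as above and invokes Lemma \ref{prop:rexpGir}; because $i\geq1$ the outcome is $O(\dg{x,y}^{2-l})$, and $\dg{x,y}^2\leq\eta(\sqa\dg{x,y})$ since $\a\geq\a_0\geq1$. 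For $\dg{x,y}^{n-2k}u_{\a,x}(y)$ one uses Leibniz, Proposition \ref{prop:bvorDux} and the same elementary comparison. Adding the three pieces gives the refined bound.

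\emph{Main obstacle.} The genuinely delicate points are two. First, one has to justify differentiating \eqref{eq:defggi} under the integral sign, which is exactly why the statement stops at $l=2k-1$: for $l=2k$ the differentiated kernel $\nabla^{2k}_{(y)}\Gg(z,y)\sim\dg{z,y}^{-n}$ is no longer locally integrable, mirroring the fact noted after \eqref{eq:estlx} that $\Dg^k\Gg[x]$ is only bounded by $\dg{x,y}^{-n}$. Second, one must recover the \emph{sharp} power of $\a$ in the derivative bounds; naive interior elliptic estimates on fixed-size balls produce constants blowing up with $\a$, so — as already in Propositions \ref{prop:bvorux} and \ref{prop:bvorDux} — the control on $\nabla^l u_{\a,x}$ (and, in the alternative approach that bypasses the decomposition, directly on $\nabla^l\Gga[x]$, since $(\Dg+\a)^k\Gga[x]=0$ off the diagonal) has to be obtained by rescaling at the intrinsic length $1/\sqa$, where $(\Dg+\a)^k$ becomes $(\Dg[\Tilde{g}]+1)^k$ with coefficients controlled uniformly in $\a\geq\a_0$. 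Everything else is the routine bookkeeping of the exponents $p_i,\rho_i$ of \eqref{eq:defpiri} across the transition $\dg{x,y}\sim1/\sqa$.
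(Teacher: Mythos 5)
Your proposal follows the same decomposition $\Gga=\Gg+\sum_i\Gg^i+u_{\a,x}$ and uses the same ingredients in the same order as the paper: the Euclidean derivative bounds of Proposition \ref{prop:bvorDGak} and Corollary \ref{prop:betbvorDGak} transported via \eqref{eq:rimder} for the leading piece, differentiation under the integral in \eqref{eq:defggi} controlled by Lemma \ref{prop:rexpGir} (and $l\leq 2k-1$ is indeed exactly what keeps $\nabla^l\Gg[z]$ integrable near the diagonal), and Proposition \ref{prop:bvorDux} for $u_{\a,x}$, with the power-of-$\a$ bookkeeping absorbed across the three regimes exactly as in Lemma \ref{prop:estGga}. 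Your bound $O(\dg{x,y}^{2-l})$ for the $\Gg^i$ contribution to the refined estimate is a slight simplification (in the borderline case $2k+2i-(l-m)=n$ a logarithm appears, and the paper records the sharper $O(\tfrac{1}{\a}\eta(\sqa\dg{x,y})\dg{x,y}^{-l})$), but since $\dg{x,y}^2\leq\eta(\sqa\dg{x,y})$ for $\a\geq1$ and $\sqa\dg{x,y}\leq1$ the conclusion is unaffected; overall this is the paper's proof.
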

\begin{proof}
    We estimate the derivative of each term in the expression \ref{def:Gga} independently.
    \par First, we compute estimates for the derivatives of $\Gg$. Fix $x\in M$, and let $u := \exp_x^{-1}(y)$, then by the definition \eqref{def:Gg}, and \eqref{eq:estGg0}, we have for $u \in \Bal{0}{\tau_0}$,
    \begin{equation}\label{tmp:estDGg}
    \begin{aligned}
        \abs{\nabla_g^l \Gg[x]}_g (\exp_x(u)) &\leq C \abs{\nabla_{\Tilde{g}}^l \bpr{\chi(\abs{u}) \Ga[0]{k}(u)}}\\
            &\leq C \sum_{m=1}^{l} \abs{\nabla_{\xi}^m \bpr{\chi(\abs{u}) \Ga[0]{k}(u)}} \\
            &\leq C \begin{cases}
                \abs{u}^{-(n-2k+l)} & \sqa \abs{u} \leq 1\\
                \a^{k\frac{n-3}{4} + \frac{l}{2}} \abs{u}^{\frac{n(k-2)+k}{2}} e^{-\sqa \abs{u}} & \sqa \abs{u} \geq 1
            \end{cases},
    \end{aligned}
\end{equation}
    using \eqref{eq:rimder}, where $\Tilde{g}$ is as in \eqref{def:gTil}.
    By the multiplication by the cutoff, we also have 
    \[ \abs{\nabla^l \Gg[x](y)}_g = 0 \qquad  \text{for $y \in M\setminus\Bal{x}{\tau_0}$.}
    \]
    \par In a second step, we use \eqref{eq:defggi} and the fact that $\big|\nabla^l \Gg[x]\big|_g \in L^1(M)$ for $l=1,\ldots 2k-1$ thanks to \eqref{tmp:estDGg}, and we obtain 
    \[  \abs{\nabla^l \Gg[x]^i(y)}_g \leq \intM{ \abs{\Gam{i}(x,z)}\abs{\nabla^l\Gg[z](y)} }(z)_g, 
        \]
    for $i=1,\ldots N-1$.
    Using Lemma \ref{prop:rexpGir}, we then have, when $x\neq y$ with $\sqa \dg{x,y} \leq 1$,
    \begin{equation}\label{tmp:1estGi}
        \abs{\nabla^l \Gg[x]^i(y)}_g \leq C_i \begin{cases}
        \dg{x,y}^{-(n-2k - 2i + l)} & \text{when $2k+2i-l < n$}\\
        1 + \abs{\log(\sqa \dg{x,y})} & \text{if $2k+2i - l = n$}\\
        \a^{-\frac{2k+2i-n-l}{2}} & \text{when $2k+2i -l > n$}
    \end{cases}.
        \end{equation}
    We also have, when $\sqa \dg{x,y} \geq 1$,
    \[  \abs{\nabla^l \Gg[x]^i(y)}_g \leq C \a^{p_i + \frac{l}{2}} \dg{x,y}^{\rho_i} e^{-\sqa \dg{x,y}}
        \]
    where $p_i,\rho_i$ were defined in \eqref{eq:defpiri}. Finally, $\abs{\nabla^l \Gg[x]^i(y)}_g = 0$ when $\dg{x,y} \geq (i+1)\tau_0$, where $\tau_0$ is as in Proposition \ref{prop:defgamx}. There is then a constant $C_\epsilon>0$ such that for $\a$ large enough and all $x,y$ with $\sqa \dg{x,y} \geq 1$, 
    \[  \abs{\nabla^l \Gg[x]^i(y)}_g \leq C_\epsilon \a^{-i} \dg{x,y}^{-(n-2k + l)} e^{-(1-\epsilon)\sqa\dg{x,y}}
        \]
    for $i = 1,\ldots N-1$, $l = 1,\ldots 2k-1$. 
    \par For the last term $u_{\a,x}$, choose $0<\epsilon' < \epsilon$, Proposition \ref{prop:bvorDux} gives
    \begin{multline}\label{tmp:2estu}
        \abs{\nabla^l u_{\a,x}(y)}_g \leq C \a^{-k+\frac{l}{2}} \begin{cases}
            1 & \sqa \dg{x,y} \leq 1\\
            e^{-(1-\epsilon')\sqa\dg{x,y}} & \sqa \dg{x,y} \geq 1\\
            e^{-(1-\epsilon')\sqa \inj/2} & \dg{x,y} \geq \inj/2
        \end{cases} \\
        \leq C\a^{-\frac{n}{2}} \dg{x,y}^{-(n-2k+l)} \begin{cases}
            1 & \sqa \dg{x,y} \leq 1\\
            e^{-(1-\epsilon)\sqa\dg{x,y}} & \sqa \dg{x,y} \geq 1\\
            e^{-(1-\epsilon)\sqa \inj/2} & \dg{x,y} \geq \inj/2
        \end{cases}.
    \end{multline}
    Putting this together, we obtain for $l=1, \ldots 2k-1$,
    \begin{equation}\label{eq:DGgasum}\begin{aligned}
        \abs{\nabla^l \Gga[x](y)}_g &\leq \abs{\nabla^l \Gg[x](y)}_g + \sum_{i=1}^{N-1} \abs{\nabla^l \Gg[x]^i(y)}_g + \abs{\nabla^l u_{\a,x}(y)}_g\\
            &\leq C \begin{cases}
        \dg{x,y}^{-(n-2k + l)}  & \sqa \dg{x,y} \leq 1\\
        \dg{x,y}^{-(n-2k + l)} e^{-(1-\epsilon)\sqa \dg{x,y}} & \sqa\dg{x,y} \geq 1\\
        e^{-(1-\epsilon)\sqa \inj/2} & \dg{x,y} \geq \inj/2
    \end{cases}
    \end{aligned}\end{equation}
    which proves the first part of Proposition \ref{prop:bvorDgGga}.
    \par For the second part of the proof, we use again \eqref{def:Gga}, for $\dg{x,y} \leq 1/\sqa$ we have
    \begin{multline}\label{tmp:decompF}
        \abs{\nabla^l \bpr{\dg{x,y}^{n-2k}\Gga[x](y)}}_g\\ = \abs{\nabla^l \bpr{\dg{x,y}^{n-2k}\Gg[x](y)}}_g + \bigO\bpr{\tfrac{1}{\a}\eta\big(\sqa \dg{x,y}\big)\dg{x,y}^{-l}}
    \end{multline}
    using \eqref{tmp:1estGi} and \eqref{tmp:2estu}. Let $F_x(y) := \dg{x,y}^{n-2k}\Gg[x](y)$ for $y\in \Bal{x}{1/\sqa}$, and $\Tilde{F}(u) := F_x(\exp_x(u)) = \abs{u}^{n-2k} \Ga[0]{k}(u)$ for $\abs{u}\leq 1/\sqa$. Then, writing $\Tilde{g} := \exp_x^* g$ as before,
    \begin{multline*}
        \abs{\nabla_g^l F_x}_g(\exp_x(u)) = \abs{\nabla_{\Tilde{g}}^l \Tilde{F}(u)} (1 + \bigO(\abs{u}^2))\\
            = \bpr{\abs{\nabla_\xi^l \Tilde{F}(u)} + \bigO\bpr{\abs{u} \abs{\nabla_\xi^{l-1} \Tilde{F}(u)}} + \bigO\bpr{\sum_{m=1}^{l-2} \abs{\nabla_\xi^{m}\Tilde{F}(u)}}}(1 + \bigO(\abs{u}^2))
    \end{multline*}
    for all $\abs{u} \leq 1/\sqa$, using \eqref{eq:rimder}. Each term in this sum is estimated using Corollary \ref{prop:betbvorDGak}, and we finally obtain in \eqref{tmp:decompF} that, when $\sqa\dg{x,y} \leq 1$ and $\a\geq \a_0$,
    \[
        \abs{\nabla^l \bpr{\dg{x,y}^{n-2k}\Gga[x](y)}}_g \leq C \eta\big(\sqa\dg{x,y}\big) \dg{x,y}^{-l}.
    \]
\end{proof}

\subsection{Mass of the operator in dimension $n = 2k+1$.}
We conclude this paper by a consideration in the case where $n=2k+1$. When $n=2k+1$, the Green's function $\Gga$ can be re-written as
\[  \Gga(x,y) = \frac{c_{n,k}}{\dg{x,y}} + \mu_x(y),
    \]
where $c_{n,k}$ is given by \eqref{def:cnk}, and $\mu_x(y)$ is a continuous function for all $x\in M$, as recalled below. It is then standard to define the \emph{mass} of the operator as the quantity $\mu_x(x)$, see \cite{SchYau81} for the conformal Laplacian, and \cite{HumRau09} for the Paneitz-Branson operator. 
\begin{lemma}
    There exists $\a_0 \geq 1$ and $C_1,C_2>0$ such that for all $x \in M$, $\a \geq \a_0$, 
    \[  C_1 \sqa \leq -\mu_x(x) \leq C_2 \sqrt{\a}.
        \]
\end{lemma}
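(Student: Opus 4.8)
The plan is to unwind the decomposition $\Gga(x,y) = \Gg(x,y) + \sum_{i=1}^{N-1}\Gg^i(x,y) + u_{\a,x}(y)$ coming from \eqref{def:Gga}--\eqref{def:Gg*} near the diagonal and read off $\mu_x(x)$. Fix $\a\ge\a_0$ with $1/\sqa < \tau_0/2$. For $y$ close to $x$ one has $\chi(\dg{x,y}) = 1$ and $\sqa\dg{x,y}\le 1$, so by \eqref{def:Gg} and the splitting $\Ga{k} = \G{k}_0 + R_\a$ (notation of the proof of Proposition \ref{prop:excbvorGa}, with $R_\a := \Ga{k}-\G{k}_0$) one gets $\Gg(x,y) = c_{n,k}\dg{x,y}^{-1} + R_\a(0,\exp_x^{-1}(y))$, hence $\mu_x(y) = R_\a(0,\exp_x^{-1}(y)) + \sum_{i=1}^{N-1}\Gg^i(x,y) + u_{\a,x}(y)$ for $y$ near $x$. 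In dimension $n = 2k+1$ one has $2k+2i > n$ for every $i\ge 1$, so each $\Gg^i$ is continuous across the diagonal by Lemma \ref{prop:rexpGir}; moreover $u_{\a,x}\in C^0(M)$, and $R_\a$ extends continuously across the diagonal since from \eqref{tmp:eqh}--\eqref{tmp:esth} one has $\Dg[\xi]^k R_{\a,x} = h_\a$ with $\abs{h_\a(x,z)} \le C\a\abs{x-z}^{2-n}$ for $\sqa\abs{x-z}\le1$, whence $h_{\a,x}\in L^p_{loc}$ for some $p > n/2k$ and elliptic regularity plus Sobolev's embedding give $R_{\a,x}\in C^{0,\theta}_{loc}$. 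Letting $y\to x$ then yields the identity $\mu_x(x) = R_\a(0,0) + \sum_{i=1}^{N-1}\Gg^i(x,x) + u_{\a,x}(x)$.

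Next I would isolate the leading term by scaling. Since $\Ga{k}(x,y) = \a^{(n-2k)/2}\Go{k}(\sqa x,\sqa y)$ by Theorem \ref{prop:uniqGa} and $\G{k}_0$ is homogeneous of degree $2k-n$, one has $R_\a(x,y) = \a^{(n-2k)/2}R_1(\sqa x,\sqa y)$ with $R_1 := \Go{k} - \G{k}_0$; as $R_1$ depends only on the distance between its arguments, $R_\a(0,0) = \sqa\,R_1(0,0)$ when $n = 2k+1$. The other two terms are genuinely lower order: \eqref{eq:estsumGi} with $\eta(t) = t$ gives $\bigl|\sum_{i=1}^{N-1}\Gg^i(x,x)\bigr| \le C\a^{-1/2}$, and \eqref{eq:estux} gives $\abs{u_{\a,x}(x)} \le C\a^{-k}\le C\a^{-1/2}$ (as $k\ge1$, $\a\ge1$), both uniformly in $x$. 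Thus $-\mu_x(x) = -R_1(0,0)\,\sqa + \bigO(\a^{-1/2})$ uniformly in $x\in M$. The upper bound $-\mu_x(x)\le C_2\sqa$ is then immediate, using only that $\abs{R_1(0,0)} = \abs{R_\a(0,0)}/\sqa$ is finite — guaranteed by the estimate $\abs{R_\a(x,y)}\le C\sqa$ for $\sqa\abs{x-y}\le1$ shown in the proof of Proposition \ref{prop:excbvorGa} — and the lower bound is reduced to showing that the model constant $-R_1(0,0)$ is strictly positive.

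The main obstacle is exactly this sign: one must show $R_1(0,0) < 0$, i.e. that the ``mass'' of the model operator $(\Dg[\xi]+1)^k$ in $\R^n$ is negative. I would argue in Fourier variables. There $\Go{k}$ is the Bessel potential with symbol $(1+\abs{\xi}^2)^{-k}$ (since $\widehat{\Go{1}}(\xi) = (1+\abs{\xi}^2)^{-1}$ by Proposition \ref{prop:GreenD1}, and $\Go{k} = \Go{1}^{*k}$) while $\G{k}_0$ is the Riesz potential with symbol $\abs{\xi}^{-2k}$ (by the choice of $c_{n,k}$), and because $2k < n < 2k+2$ the difference
\[
    g(\xi) := (1+\abs{\xi}^2)^{-k} - \abs{\xi}^{-2k}
\]
is integrable on $\R^n$: near the origin $\abs{\xi}^{-2k}\in L^1_{loc}$ since $2k<n$, and near infinity $g(\xi) = \bigO(\abs{\xi}^{-2k-2})$ with $2k+2 > n$. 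Hence $R_1 = \mathcal{F}^{-1}[g]$ is continuous and $R_1(0,0)$ is a fixed negative multiple of $\int_{\R^n} g$, which is $<0$ because $g(\xi) < 0$ for every $\xi\neq 0$ (as $1+\abs{\xi}^2 > \abs{\xi}^2$). An alternative closer to the convolution arguments already used: $v := \G{1}_0 - \Go{1}$ satisfies $\Dg[\xi] v = \Go{1} \ge 0$ on $\R^n$ and vanishes at infinity, hence $v>0$ by the minimum principle, and telescoping $\G{k}_0 - \Go{k} = \sum_{j=0}^{k-1}(\G{1}_0)^{*j} * v * (\Go{1})^{*(k-1-j)}$ gives $\G{k}_0 - \Go{k} > 0$ pointwise, in particular at the origin, i.e. $R_1(0,0) < 0$. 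Having pinned $m_{n,k} := -R_1(0,0) > 0$, a constant depending only on $n,k$, I would finish: the uniform bound $\bigl| -\mu_x(x) - m_{n,k}\sqa\bigr| \le C\a^{-1/2}$ holds, so enlarging $\a_0$ so that $C\a^{-1/2} \le \tfrac12 m_{n,k}\sqa$ for $\a\ge\a_0$ yields $\tfrac12 m_{n,k}\sqa \le -\mu_x(x) \le \tfrac32 m_{n,k}\sqa$, i.e. the claim with $C_1 = \tfrac12 m_{n,k}$ and $C_2 = \tfrac32 m_{n,k}$.
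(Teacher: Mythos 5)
Your proposal is correct, and it reaches the conclusion by a genuinely different route than the paper for the decisive sign argument. The reduction is the same: both you and the paper decompose $\mu_x(y)$ near the diagonal, isolate $R_\a(0,\exp_x^{-1}(y))$ as the dominant term, and show the metric correction $\sum\Gg^i$ and the remainder $u_{\a,x}$ contribute only $\bigO(\a^{-1/2})$. What differs is how the sign and $\sqa$-growth of $-R_\a(0,0)$ is obtained. You use the exact scaling relation $R_\a(0,0)=\sqa\,R_1(0,0)$ (where $R_1=\Go{k}-\G{k}_0$) to reduce everything to the single model constant $m_{n,k}:=-R_1(0,0)$, then show $m_{n,k}>0$ in Fourier variables: $\widehat{R}_1(\xi)=(1+\abs{\xi}^2)^{-k}-\abs{\xi}^{-2k}$ is strictly negative, and in the window $2k<n<2k+2$ it is integrable, so $R_1(0,0)$ is a \emph{positive} multiple of $\int\widehat{R}_1<0$ (you wrote ``negative multiple'', which is a slip in wording, but the conclusion $R_1(0,0)<0$ is right). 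The paper instead keeps $R_\a$ intact, plugs the representation formula $R_\a(0,u)=\int h_\a(0,z)\,c_{n,k}\abs{u-z}^{-1}dz$ from Proposition \ref{prop:excbvorGa}, integrates by parts so that only $\Dg[\xi]^l(\abs{u-z}^{-1})=d_l\abs{u-z}^{-1-2l}$ with $d_l>0$ appears, and then invokes positivity and the lower pointwise bound on $\Ga{k}$ to get $-R_\a(0,u)\geq C\sqa$ directly. Your approach is conceptually cleaner (it identifies $m_{n,k}$ explicitly as $-(2\pi)^{-n}\int[(1+\abs{\xi}^2)^{-k}-\abs{\xi}^{-2k}]d\xi$, a stronger output than the paper's inequality), while the paper's stays inside the convolution machinery already built. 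Two small points worth tightening in your version: (i) the Fourier-symbol identification $\widehat{\Go{1}}=(1+\abs{\xi}^2)^{-1}$ is standard for the Bessel potential but is not literally stated in Proposition \ref{prop:GreenD1}, so it deserves a citation or a one-line check; (ii) in your alternative telescoping argument, the assertion ``$\G{k}_0-\Go{k}>0$ pointwise, in particular at the origin'' needs the caveat that at $u=0$ both terms are $+\infty$ — one should instead observe that each summand $(\G{1}_0)^{*j}*v*(\Go{1})^{*(k-1-j)}$ extends continuously and strictly positively to $u=0$, and only then evaluate there.
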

\begin{proof}
    First, by relation \eqref{eq:bvorGg} we obtain an upper bound on the absolute value of the mass : There exists $\a_0\geq 1$ and $C>0$ such that for all $x\in M$, $\a\geq \a_0$,
    \[   \abs{\mu_x(x)} \leq C \sqa.
        \] 
    Now for the second part, fix $\a_0\geq 1$ given by Theorem \ref{prop:bvorGga}. From the decomposition \eqref{def:Gga}, one obtains that 
    \[  \abs{\mu_x(y)} \leq \abs{\Gg(x,y) - \frac{c_{n,k}}{\dg{x,y}}} + \sum_{i=1}^{N} \abs{\Gg^i(x,y)} + \abs{u_{\a,x}(y)},
        \]
    so that 
    \[  \mu_x(y) = \Gg(x,y) - \frac{c_{n,k}}{\dg{x,y}} + \bigO(\a^{-1/2})
        \]
    using \eqref{eq:estTGsqa}, \eqref{eq:estTG} and Proposition \ref{prop:bvorux}. Now with the definition \eqref{def:Gg}, we have for $\dg{x,y} \leq \tau_0/2$ given in Proposition \ref{prop:defgamx},
    \begin{equation}\label{tmp:mas1}
        \mu_x(y) = \bpr{\Ga{k}(0,u) - \frac{c_{n,k}}{\abs{u}}}\at{u = \exp_{x}^{-1}(y)} + \bigO(\a^{-1/2}).
    \end{equation}
    Using the notations of Proposition \ref{prop:excbvorGa}, recall that  
    \[  \Ga{k}(0,u) - \frac{c_{n,k}}{\abs{u}} = R_\a(0,u),
        \]
    where $R_{\a}$ satisfies \eqref{eq:repforR},
    \begin{equation*}
        R_\a(0,u) = - \int_{\R^n} c_{n,k} \abs{u-z}^{-1} \sum_{l=0}^{k-1} \a^{k-l}\Dg[\xi]^l \Ga[0]{k}(z) dz
        \end{equation*}
    for all $u \neq 0$. By integration by part, and using Proposition \ref{prop:bvorDGak}, we obtain
    \begin{equation}\label{tmp:eqRa}
        R_\a(0,u) = -c_{n,k}\sum_{l=0}^{k-1} \a^{k-l}\int_{\R^n} (\Dg[\xi]^l)_{(z)} \bpr{\abs{u-z}^{-1}}\, \Ga[0]{k}(z) dz.
        \end{equation}
    Simple calculations show that for $l=0,\ldots k-1$, there is a constant $d_{n,k}>0$ depending only on $n,k$ such that
    \[  \Dg[\xi]^l \big(\tfrac{1}{\abs{u}}\big) = d_{n,k}\frac{1}{\abs{u}^{1+2l}}.
        \]
    Since $\Ga{k} >0$, \eqref{tmp:eqRa} gives 
    \[
        -R_\a(0,u) \geq \Tilde{c}_{n,k}\, \a\int_{\R^n}\frac{1}{\abs{u-z}^{2k-1}} \Ga[0]{k}(y)\, dy.
    \]
    Let $\delta \in (0,1)$ whose value will be fixed later. Proposition \ref{prop:excbvorGa} shows that there exists a constant $C>0$ independent of $\a$, such that 
    \[  \Ga{k}(x,y) \geq \frac{c_{n,k}}{\abs{x-y}} (1 - C \sqrt{\a}\abs{x-y})
        \]
    for all $\abs{x-y} \leq 1/\sqrt{\a}$. Thus, 
    \begin{align*}  
        -R_\a(0,u) &\geq \Tilde{c}_{n,k} \a \int_{\Bal{0}{\delta/\sqrt{\a}}} \frac{1}{\abs{y-u}^{2k-1}} \frac{1}{\abs{y}} (1 - C \delta) dy\\
            &\geq \Tilde{c}_{n,k} \a \frac{\delta}{\sqrt{\a}} (1 - \delta C)
    \end{align*}
    for all $\abs{u} \leq \tfrac{2\delta}{\sqa}$. Finally, fix $\delta>0$ small enough so that $1-\delta C \geq \tfrac{1}{2}$, we obtain that there exists $C>0$ independent of $\alpha$ such that for all $\abs{u} \leq \tfrac{2\delta}{\sqa}$,
    \begin{equation}\label{tmp:mas2}
        -R_\a(0,u) \geq C \sqrt{\a}.
    \end{equation}
    The continuity of $R_\a$ follows from elliptic theory, since $R_\a$ satisfies 
    \[  \Dg[\xi]^k R_{\a,x} = h_{\a,x}
        \]
    and $h_{\a,x} \in L^p(\R^n)$ for all $p>\frac{n}{n-2}$ using \eqref{tmp:esth}, with $n-2 < 2k$. This implies the continuity of $\mu_x(y)$, since all the terms $\Gg^{i}$ and $u_{\a,x}$ are also continuous by Lemma \ref{prop:rexpGir}. Putting \eqref{tmp:mas1} and \eqref{tmp:mas2} together and evaluating them at $u=0$ gives
    \[  \mu_x(x) = R_\a(0,u)\at{u = 0} + \bigO(\a^{-1/2}) \leq -C_1 \sqrt{\a}
        \]
    for some $C_1>0$ and for $\a \geq \a_0$. In particular, $\mu_x(x) \to -\infty$ as $\a \to \infty$.
\end{proof}
Note that the terms due to the presence of the metric contribute only in $\bigO(\a^{-1/2})$ whereas the main contribution leading to the divergence of the mass comes from the lower-order terms in the operator $(\Dg[\xi] +\a)^k$ on the Euclidean space.

\appendix
\section{Generalized Giraud's Lemmas}\label{sec:giraud}
In this Appendix, we compute some convolution estimates that are needed in the previous Sections to obtain the bounds on the Green's functions. These are modified versions of results known as Giraud's Lemma in the literature (see \cite[Proposition 4.12]{Aubin82}, \cite[Lemma 7.5]{Heb14}), proved in \cite[p. 150]{Gir29}.
\begin{lemma}[Exponential Giraud's]\label{prop:expGir}
    Let $X, Y \in C^0(\R^n \times \R^n \setminus \Diag)$. Assume that there exist $\beta, \gamma \in (0,n]$, and $\rho, \nu > -n$ such that
    \begin{align*}  
        \abs{X(x,y)} &\leq \begin{cases}
        \abs{x-y}^{\beta-n} & \text{if }\abs{x-y} \leq 1\\
        \abs{x-y}^\rho e^{-\abs{x-y}} & \text{if } \abs{x-y} \geq 1
        \end{cases},  \\
        \abs{Y(x,y)} &\leq \begin{cases}
        \abs{x-y}^{\gamma-n} & \text{if } \abs{x-y} \leq 1\\
        \abs{x-y}^\nu e^{-\abs{x-y}} & \text{if } \abs{x-y} \geq 1
    \end{cases}
        \end{align*}
    for all $x\neq y$. Let $Z(x,y) := \int_{\R^n} X(x,z) Y(z,y)\, dz$ for $x\neq y$. Then $Z \in C^0(\R^n \times \R^n \setminus \Diag)$ and there exists $C>0$ such that for all $x\neq y$ :
    \begin{itemize}
        \item If $\abs{x-y} \leq 1$,
        \[  \abs{Z(x,y)} \leq \begin{cases}
            C\abs{x-y}^{\beta+ \gamma-n} & \text{when } \beta+\gamma < n\\
            C\big(1 + \abs{\log \abs{x-y}}\big) & \text{when } \beta+\gamma = n\\
            C & \text{when } \beta+\gamma > n.
            \end{cases}
            \]
        \item If $\abs{x-y} \geq 1$,
        \[ \abs{Z(x,y)} \leq C \abs{x-y}^{\rho + \nu + n} e^{-\abs{x-y}}.
            \]
    \end{itemize}
    Moreover, when $\beta+\gamma > n$, $Z$ is continuous on the whole $\R^n \times \R^n$.
\end{lemma}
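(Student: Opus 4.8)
The proof follows the classical route for Giraud-type estimates, splitting according to whether $r:=\abs{x-y}$ is $\le 1$ (near the diagonal) or $\ge 1$ (far), but keeping track of the exponential tails throughout. Two elementary reductions come first. Since $\beta,\gamma\in(0,n]$ one has $t^{\rho}e^{-t}\le Ct^{\beta-n}$ and $t^{\nu}e^{-t}\le Ct^{\gamma-n}$ for $t\ge1$, so the hypotheses give the uniform polynomial bounds $\abs{X(x,z)}\le C\abs{x-z}^{\beta-n}$ and $\abs{Y(z,y)}\le C\abs{z-y}^{\gamma-n}$ for all $z$, while on $\{\abs{x-z}\ge1\}$ (resp. $\{\abs{z-y}\ge1\}$) the exponential bound remains available; and by translation invariance it suffices to work with these simplified bounds.

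\textbf{Near the diagonal ($r\le1$).} Write $\R^n=D_1\sqcup D_2$ with $D_1=\Bal{x}{3}\supset\Bal{y}{2}$ and $D_2$ its complement. On $D_1$ use the polynomial bounds: $\int_{D_1}\abs{X(x,z)Y(z,y)}\,dz\le C\int_{\Bal{x}{3}}\abs{x-z}^{\beta-n}\abs{z-y}^{\gamma-n}\,dz$, which is exactly the integral controlled by the classical Giraud lemma — split $\Bal{x}{3}$ into $\{\abs{z-x}\le r/2\}$, $\{\abs{z-y}\le r/2\}$, and the annular remainder, and bound each by a one-dimensional radial integral — yielding $r^{\beta+\gamma-n}$, $1+\abs{\log r}$, or $C$ according to the sign of $\beta+\gamma-n$. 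On $D_2$ both $\abs{x-z},\abs{z-y}\ge2$, so $\abs{X(x,z)Y(z,y)}\le C\abs{x-z}^{\rho}\abs{z-y}^{\nu}e^{-\abs{x-z}}e^{-\abs{z-y}}\le C\abs{x-z}^{\rho+\abs{\nu}}e^{-\abs{x-z}}$ (using $\abs{z-y}\le2\abs{x-z}$), whose integral over $D_2$ is a finite constant and, since $r\le1$, is absorbed into the $D_1$ estimate in every case.

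\textbf{Far from the diagonal ($r\ge1$).} For $1\le r\le2$, split off $\Bal{x}{10}$: the integral over $\Bal{x}{10}$ is $\le C$ just as above, the integral over its complement is $O(1)$ as for $D_2$, and $r^{\rho+\nu+n}e^{-r}$ is bounded below by a positive constant on $[1,2]$, so the claimed bound holds. For $r\ge2$ set $A=\{\abs{z-x}\le r/2\}$, $B=\{\abs{z-y}\le r/2\}$, $C=\R^n\setminus(A\cup B)$. On $A$ one has $r/2\le\abs{z-y}\le3r/2$, so $\abs{Y(z,y)}\le C\abs{z-y}^{\nu}e^{-\abs{z-y}}\le Cr^{\nu}e^{-r}e^{\abs{x-z}}$ by the triangle inequality, whence
\[
\int_A\abs{X(x,z)Y(z,y)}\,dz\le Cr^{\nu}e^{-r}\!\!\int_{\{\abs{x-z}\le r/2\}}\!\!\abs{X(x,z)}e^{\abs{x-z}}\,dz\le Cr^{\rho+\nu+n}e^{-r},
\]
the inner integral being split at $\abs{x-z}=1$ and bounded by $\int_0^1 t^{\beta-1}e^{t}\,dt<\infty$ and $\int_1^{r/2}t^{\rho+n-1}\,dt\le Cr^{\rho+n}$ (here $\rho+n>0$). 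The term over $B$ is symmetric. On $C$ both $\abs{x-z},\abs{z-y}\ge r/2\ge1$, so $\abs{X(x,z)Y(z,y)}\le C\abs{x-z}^{\rho}\abs{z-y}^{\nu}e^{-r}e^{-s}$ with $s:=\abs{x-z}+\abs{z-y}-r\ge0$; since the ellipsoid $\{z:\abs{x-z}+\abs{z-y}\le r+s\}$ lies in a ball of radius $(r+s)/2$ and all its points have $\abs{x-z},\abs{z-y}\le r+s$, bounding $\abs{x-z}^{\rho}\abs{z-y}^{\nu}\le Cr^{\rho_-+\nu_-}(r+s)^{\rho_++\nu_+}$ (positive and negative parts of the exponents) and estimating the resulting Stieltjes integral against $d\bigl(C(r+s)^n\bigr)$ — absorbing a fraction of $e^{-s}$ to control the fixed positive powers — gives $\int_C\abs{x-z}^{\rho}\abs{z-y}^{\nu}e^{-(\abs{x-z}+\abs{z-y})}\,dz\le Cr^{\rho+\nu+n}e^{-r}$.

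\textbf{Continuity, and the main difficulty.} Continuity of $Z$ on $\R^n\times\R^n\setminus\Diag$, and on all of $\R^n\times\R^n$ when $\beta+\gamma>n$, follows by dominated convergence: near a fixed point the integrand is dominated by $C\abs{x-z}^{\beta-n}\abs{z-y}^{\gamma-n}\mathbf 1_{B}+C\abs{x-z}^{\rho+\abs{\nu}}e^{-\abs{x-z}}$ on a fixed ball $B$, whose $L^1$ norm is locally uniformly bounded, and finite up to the diagonal precisely when $\beta+\gamma>n$. The only step that is more than bookkeeping is the far region $C$ for $r\ge1$: the decay $e^{-(\abs{x-z}+\abs{z-y})}$ only saturates the rate $e^{-r}$ on a thin spheroidal neighbourhood of the segment $[x,y]$, and one must verify that the volume growth of the confocal ellipsoids is slow enough that integration retains the full rate $e^{-r}$ at the cost of only a polynomial factor of degree $\le n$; this is what rules out the cruder bound $e^{-(\abs{x-z}+\abs{z-y})}\le e^{-r/2}e^{-(\abs{x-z}+\abs{z-y})/2}$, which would lose a factor $e^{-r/2}$.
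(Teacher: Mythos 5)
Your proof is correct and, for the near-diagonal regime $r\le1$, follows the same classical Giraud splitting as the paper. Where you diverge from the paper is in the far regime $r\ge2$, and it is worth comparing the two routes. You split $\R^n$ into $A=\Bal{x}{r/2}$, $B=\Bal{y}{r/2}$ and $C=\R^n\setminus(A\cup B)$, handling $A$ and $B$ by transferring the exponential factor ($e^{-\abs{z-y}}\le e^{-r}e^{\abs{x-z}}$) and then cancelling against the exponential in $X$, and handling $C$ by a coarea/Stieltjes argument over the confocal ellipsoids $\{z:\abs{x-z}+\abs{z-y}\le r+s\}$, using that their volume is $\lesssim(r+s)^n$. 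The paper's proof (given in the proof of Lemma \ref{prop:rexpGir}, with $\a=1$ and $\tau,\sigma=\infty$) splits instead into $\Bal{x}{1}$, $\Bal{y}{1}$, the annulus $\Bal{x}{3r/2}\setminus(\Bal{x}{1}\cup\Bal{y}{1})$, and the outer region $\R^n\setminus\Bal{x}{3r/2}$. On the annulus it uses only the trivial bound $e^{-(\abs{x-z}+\abs{z-y})}\le e^{-r}$ and integrates the pure power $\abs{x-z}^{\rho}\abs{z-y}^{\nu}$ over a ball of radius $\sim r$, which already gives $Cr^{\rho+\nu+n}$; on the outer region both $\abs{x-z}$ and $\abs{z-y}$ are comparable and $\ge3r/2$, so the product of exponentials is $\le e^{-2\abs{x-z}}$ and the integral is an incomplete Gamma function $\lesssim r^{\rho+\nu+n}e^{-2r}$. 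This avoids the ellipsoid level-set argument entirely: the ``crude'' bound you warn against, $e^{-(\abs{x-z}+\abs{z-y})}\le e^{-r/2}e^{-(\abs{x-z}+\abs{z-y})/2}$, is indeed too lossy, but it is not the only alternative to the ellipsoid argument — using $e^{-(\abs{x-z}+\abs{z-y})}\le e^{-r}$ together with the polynomial volume of the ball $\Bal{x}{3r/2}$ is both elementary and sufficient. So your Step on $C$ is correct but more work than necessary; your ellipsoid computation would in fact give the sharper power $r^{(n+1)/2}$ in place of $r^n$, but the Lemma does not need this. Finally, your continuity argument is stated a little loosely (the dominating function should be phrased so it depends only on the limit point $(x_0,y_0)$ and on $z$, since the singularities of $\abs{x-z}^{\beta-n}$ move with $x$); this is fixable by the standard device of cutting out small balls around $x$ and $y$, which is what the paper does, but as written it is a gap in rigour rather than in idea.
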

The following Proposition extends this result on a compact Riemannian manifold $M$ with injectivity radius $\inj>0$, and with a scale parameter $\a\geq 1$.
\begin{lemma}\label{prop:rexpGir}
    Let $(M,g)$ be a closed Riemannian manifold, and $\tau, \sigma>0$ such that $\tau + \sigma < \inj$. Let $X, Y \in C^0(M\times M \setminus \Diag)$, such that for all $x,y \in M$, $X(x,\cdot)$ is supported in $\Bal{x}{\tau}$ and $Y(\cdot,y)$ in $\Bal{y}{\sigma}$. Assume that there exist $\beta, \gamma \in (0,n]$, $p, q \geq 0$ and $\rho, \nu > -n$ satisfying
    \begin{equation}\label{eq:tmpcondpro}
    \begin{cases}
        2p - \rho \leq n-\beta\\
        2q - \nu \leq n-\gamma
    \end{cases}
        \end{equation}
    and such that, for all $x\neq y$, $\a \geq 1$,
    \begin{align*}
        \abs{X(x,y)} &\leq \begin{cases}
        \dg{x,y}^{\beta-n} &  \text{if } \sqa \dg{x,y} \leq 1\\
        \a^p\dg{x,y}^\rho \expab{x,y} & \text{if } \sqa \dg{x,y} \geq 1
    \end{cases}\\
    \abs{Y(x,y)} &\leq \begin{cases}
        \dg{x,y}^{\gamma-n} & \text{if } \sqa \dg{x,y} \leq 1\\
        \a^q \dg{x,y}^\nu \expab{x,y} & \text{if } \sqa \dg{x,y} \geq 1.
    \end{cases}
        \end{align*}
    Let, for all $x\neq y$, $Z(x,y) := \intM{X(x,z)Y(z,y)}(z)$. Then $Z \in C^0(M\times M \setminus \Diag)$ and, for all $x\in M$, $Z(x,\cdot)$ is supported in $\Bal{x}{\tau +\sigma}$. There exists $\a_0\geq 1$ and $C>0$ such that for all $x\neq y$ and $\a \geq \a_0$, we have the following :
    \begin{itemize}
        \item If $\sqa\dg{x,y} \leq 1$,
        \[  \abs{Z(x,y)} \leq C \begin{cases}
            \dg{x,y}^{\beta + \gamma - n} & \text{when }\beta + \gamma < n\\
            \bpr{1 + \abs{\log (\sqa \dg{x,y})}} & \text{when }\beta + \gamma = n\\
            \a^{-\frac{\beta+\gamma-n}{2}} & \text{when } \beta + \gamma > n.
        \end{cases}
            \]
        \item If $\sqa \dg{x,y} \geq 1$,
        \[  \abs{Z(x,y)} \leq C \a^{n-\frac{\beta+\gamma}{2} + \frac{\rho + \nu}{2}} \dg{x,y}^{\rho + \nu + n} e^{-\sqa \dg{x,y}}.
            \]
    \end{itemize}
    Moreover, when $\beta + \gamma > n$, $Z \in C^0(M\times M)$.
\end{lemma}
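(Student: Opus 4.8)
The plan is to settle first the two soft assertions — that $Z(x,\cdot)$ is supported in $\Bal{x}{\tau+\sigma}$ and that $Z$ is continuous — and then to prove the two pointwise bounds, treating separately the ``near'' regime $\sqa\dg{x,y}\le 1$ and the ``far'' regime $\sqa\dg{x,y}\ge 1$. The support statement is immediate: the integrand $z\mapsto X(x,z)Y(z,y)$ is supported in $\Bal{x}{\tau}\cap\Bal{y}{\sigma}$, which is empty unless $\dg{x,y}<\tau+\sigma$. Continuity of $Z$ on $M\times M\setminus\Diag$ follows from dominated convergence once one isolates, locally uniformly in $(x,y)$ away from the diagonal, the two separated and integrable singularities of the kernel (recall $\beta,\gamma>0$); when $\beta+\gamma>n$ the majorant $\dg{x,z}^{\beta-n}\dg{z,y}^{\gamma-n}$ is still integrable at $x=y$ (since $2n-\beta-\gamma<n$) and, after isolating the singularities in a ball of radius $\dg{x,y}/2$, one gets $Z(x,y)=O(\dg{x,y}^{\beta+\gamma-n})$ as $\dg{x,y}\to 0$, so $Z$ extends continuously to the diagonal.

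For the estimates, fix $x\ne y$ with $\dg{x,y}<\tau+\sigma<\inj$, so that the entire integration takes place inside the normal ball $\Bal{x}{\inj}$. I would pass to $g$-normal coordinates centred at $x$ and rescale by $\sqa$, writing $w:=\sqa\exp_x^{-1}(z)$ and $w_y:=\sqa\exp_x^{-1}(y)$, so that $\dg{x,z}=|w|/\sqa$ exactly, $|w_y|=\sqa\dg{x,y}$, $\dg{z,y}$ is comparable to $|w-w_y|/\sqa$ with constants depending only on $(M,g)$, and $dv_g\le C\a^{-n/2}\,dw$. Feeding in the hypotheses on $X$ and $Y$ together with \eqref{eq:tmpcondpro} — which says precisely that, since $\a\ge 1$, the power of $\a$ produced by the exponential regime of $X$ (resp.\ $Y$) is at most $\a^{(n-\beta)/2}$ (resp.\ $\a^{(n-\gamma)/2}$), the same power as in the near regime — one obtains, when $\sqa\dg{x,y}\le 1$,
\[
  |Z(x,y)|\le C\,\a^{\frac{n-\beta-\gamma}{2}}\int_{\R^n}\check X(w)\,\check Y(w-w_y)\,dw,
\]
where $\check X,\check Y$ are Giraud-type kernels ($|\cdot|^{\beta-n}$, resp.\ $|\cdot|^{\gamma-n}$, near the origin, and a pure power times a genuine $e^{-|\cdot|}$ at infinity). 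Since $|w_y|=\sqa\dg{x,y}\le 1$, only the local behaviour of these kernels is relevant, so the classical Giraud's Lemma (\cite[Proposition 4.12]{Aubin82}, \cite[Lemma 7.5]{Heb14}) bounds the integral by $|w_y|^{\beta+\gamma-n}$, by $1+|\log|w_y||$, or by a constant, according as $\beta+\gamma<n$, $=n$, or $>n$; substituting $|w_y|=\sqa\dg{x,y}$ and collecting the $\a$-powers gives the first estimate.

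The regime $\sqa\dg{x,y}\ge 1$ is where the real difficulty lies, and I expect it to be the main obstacle: one must recover the \emph{sharp} exponential rate $\expab{x,y}$, rather than $e^{-\sqa\dg{x,y}/2}$, together with the announced polynomial prefactor. The point is that the decay must be extracted from the exact triangle inequality $\dg{x,z}+\dg{z,y}\ge\dg{x,y}$ on $M$ itself, \emph{before} any comparison with the Euclidean metric, which would degrade the rate. Concretely I would split $M$ into $\{\dg{x,z}\le\dg{x,y}/2\}$, $\{\dg{z,y}\le\dg{x,y}/2\}$ and the remaining region, and on each piece bound $e^{-\sqa\dg{x,z}}e^{-\sqa\dg{z,y}}\le\expab{x,y}$, pairing the exponential of $X$ with that of $Y$ where both kernels are in their exponential regime and absorbing the leftover factor $e^{\pm\sqa\dg{x,z}}$ (which is $\le e$ on the piece where $\dg{x,z}\le 1/\sqa$). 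Once $\expab{x,y}$ is pulled out, the remaining integral involves only polynomial kernels and is handled by the same rescaling as before; the bookkeeping of $\a$-powers again uses \eqref{eq:tmpcondpro} — this time sharply, through both of its inequalities — and yields exactly $\a^{\,n-\frac{\beta+\gamma}{2}+\frac{\rho+\nu}{2}}\dg{x,y}^{\rho+\nu+n}\expab{x,y}$. The finitely many degenerate configurations (essentially $1\le\sqa\dg{x,y}\le 2$, where $\dg{x,y}\sim 1/\sqa$ and the three-piece splitting collapses) are covered by the crude pointwise bounds already available, which match the claimed estimate up to a constant; choosing $\a_0$ large enough that $1/\sqa<\min(\tau,\sigma)$ keeps all the case distinctions clean.
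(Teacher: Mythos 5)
Your plan for the support statement, the continuity, and the near regime $\sqa\dg{x,y}\le 1$ all hold up: the rescaling $w=\sqa\exp_x^{-1}(z)$ together with the metric comparison on the normal ball converts the problem into a Euclidean Giraud convolution with $|w_y|=\sqa\dg{x,y}\le 1$, and the fact that the lower-bound comparison $\dg{z,y}\ge C^{-1}|w-w_y|/\sqa$ degrades the exponential constant is indeed harmless there, since only integrability of the tail matters. This is a slightly different route from the paper (which performs the three-region split $\Bal{x}{r/2}$, $\Bal{x}{3r/2}\setminus\Bal{x}{r/2}$, $\Bal{x}{3/\sqa}\setminus\Bal{x}{3r/2}$, plus the exterior, directly on $M$), but both are valid.

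The far regime is where you correctly anticipated the difficulty, but your concrete plan there has a real gap. On your ``remaining region'' $A_3=\{\dg{x,z}>\dg{x,y}/2\}\cap\{\dg{z,y}>\dg{x,y}/2\}$ you bound the two exponentials by $\expab{x,y}$ via the triangle inequality and then integrate the leftover \emph{polynomial} kernel $\a^{p+q}\dg{x,z}^\rho\dg{z,y}^\nu$ over $A_3\cap\Bal{x}{\tau}$. That region extends all the way to $\dg{x,z}\approx\tau$, so when $\rho+\nu+n>0$ (which is what happens in the applications in Section~\ref{sec:riem}) the polynomial integral is $\approx\tau^{\rho+\nu+n}$, a constant, not $\dg{x,y}^{\rho+\nu+n}$. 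You then get the bound $C\a^{p+q}\expab{x,y}$, which exceeds the claimed $C\a^{n-\frac{\beta+\gamma}{2}+\frac{\rho+\nu}{2}}\dg{x,y}^{\rho+\nu+n}\expab{x,y}$ by a factor $\sim(\tau/\dg{x,y})^{\rho+\nu+n}$, unbounded as $\dg{x,y}\to 1/\sqa$. The crude triangle inequality $\dg{x,z}+\dg{z,y}\ge\dg{x,y}$ is far from sharp for $z$ far from both $x$ and $y$, and you are throwing away the extra decay precisely where you need it. The paper avoids this by splitting $A_3$ further: on $\Bal{x}{3\dg{x,y}/2}\setminus(\Bal{x}{1/\sqa}\cup\Bal{y}{1/\sqa})$ it is safe to drop to $\expab{x,y}$, because the polynomial integral is now over a ball of radius $\lesssim\dg{x,y}$ and yields $\dg{x,y}^{\rho+\nu+n}$; but on $M\setminus\Bal{x}{3\dg{x,y}/2}$ one must keep the genuine decay $e^{-\sqa(\dg{x,z}+\dg{z,y})}\le e^{-\frac{4}{3}\sqa\dg{x,z}}$ (using $\dg{z,y}\ge\frac13\dg{x,z}$ there), which produces an incomplete Gamma function $\Gamma(\rho+\nu+n,2\sqa\dg{x,y})$ whose asymptotics give the needed $\dg{x,y}^{\rho+\nu+n}\expab{x,y}$ (in fact with extra decay $e^{-\sqa\dg{x,y}}$ to spare). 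Your argument as written does not perform this last split and would not close.

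One minor point: your classical-Giraud citation is for kernels that are singular near the origin and compactly supported or polynomially decaying at infinity; your rescaled $\check X,\check Y$ grow like $|w|^\rho$ before decaying exponentially, so you are using a mild extension of the quoted statement. This is routine, but worth noting since the paper instead proves its own exponential Giraud's Lemma (Lemma~\ref{prop:expGir}) from scratch.
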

Since the proofs of these two Lemmas are the same, we only show the second one. The proof of Lemma \ref{prop:expGir} is identical setting formally $\tau,\sigma = \infty$ and $\a = 1$, and taking the integrals on the Euclidean space.

\begin{proof}
    Up to choosing $\a_0$ large enough, we can assume that $5/\sqa < \tau + \sigma < \inj$.
    \par For the first part of the proof, let $x,y \in M$ with $x\neq y$, and assume that $\dg{x,y} \leq 2/\sqa$. We have 
    \begin{multline}\label{tmp:1stZxy}
        \abs{Z(x,y)} \leq C \intM[\Bal{x}{3/\sqa}]{\frac{1}{\dg{x,z}^{n-\beta}}\frac{1}{\dg{z,y}^{n-\gamma}}}(z)\\ + \abs{\intM[M\setminus\Bal{x}{3/\sqa}]{X(x,z)Y(z,y)}(z)},
    \end{multline}
    this comes from fact that for $z\in \Bal{x}{3/\sqa}$,
    \begin{align*}
        \abs{X(x,z)} &\leq C\a^p \dg{x,z}^\rho \expab{x,y} \leq C \dg{x,z}^{\beta-n} & &\text{when }\dg{x,z} \geq 1/\sqa\\
        \abs{Y(z,y)} &\leq C\a^q \dg{z,y}^\nu \expab{x,y} \leq C \dg{z,y}^{\gamma -n} & & \text{when }\dg{z,y} \geq 1/\sqa
    \end{align*}
    for a constant $C>0$ independent of $\a,x,y,z$, by \eqref{eq:tmpcondpro}.
    
    Write $r := \dg{x,y} \leq 2/\sqa$, we first claim that 
    \begin{multline}\label{tmp:1inZxy}
        \intM[\Bal{x}{3/\sqa}]{\frac{1}{\dg{x,z}^{n-\beta}}\frac{1}{\dg{z,y}^{n-\gamma}}}(z)\\ \leq C\begin{cases}
            \dg{x,y}^{-(n-\beta-\gamma)} & \text{when }\beta+ \gamma < n\\
            1 + \abs{\log(\sqa \dg{x,y})} & \text{when }\beta + \gamma = n\\
            \a^{-\frac{\beta+\gamma-n}{2}}& \text{when }\beta+ \gamma > n
        \end{cases}.
    \end{multline}
    To prove \eqref{tmp:1inZxy}, we decompose $\Bal{x}{3/\sqa}$ in three parts: $\Bal{x}{r/2}$, $\Bal{x}{3r/2}\setminus\Bal{x}{r/2}$ and $\Bal{x}{3/\sqa}\setminus\Bal{x}{3r/2}$. First, when $ z\in \Bal{x}{r/2}$, we have $\dg{y,z} \geq r/2$, so that
    \begin{equation}\label{tmp:est1}
    \begin{aligned}  
        \intM[\Bal{x}{r/2}]{\frac{1}{\dg{x,z}^{n-\beta}}\frac{1}{\dg{z,y}^{n-\gamma}}}(z) &\leq C r^{\gamma-n}\intM[\Bal{x}{r/2}]{\frac{1}{\dg{x,z}^{n-\beta}}}(z)\\
            &\leq C r^{\gamma+\beta -n}
        \end{aligned}
    \end{equation}
    since $r/2 \leq 1/\sqa < \inj$ and $\beta>0$. 
    Similarly, when $z\in \Bal{x}{3r/2} \setminus\Bal{x}{r/2}$, we have $\dg{x,z} \geq r/2$, so that
    \begin{multline}\label{tmp:est2}  
        \intM[\Bal{x}{3r/2} \setminus\Bal{x}{r/2}]{\frac{1}{\dg{x,z}^{n-\beta}}\frac{1}{\dg{z,y}^{n-\gamma}}}(z)\\
        \begin{aligned}
            &\leq C r^{\beta-n} \intM[\Bal{x}{3r/2} \setminus\Bal{x}{r/2}]{\frac{1}{\dg{y,z}^{n-\gamma}}}(z)\\
            &\leq C r^{\beta+\gamma-n}
        \end{aligned}
        \end{multline}
    since $5r/2 \leq 5/\sqa < \inj$ and $\gamma > 0$.
    Finally, when $z\in \Bal{x}{3/\sqa} \setminus\Bal{x}{3r/2}$, we have $\frac{1}{3}\dg{x,z} \leq \dg{z,y} \leq \frac{5}{3} \dg{x,z}$, so that 
    \begin{multline*}
        \intM[\Bal{x}{3/\sqa}\setminus\Bal{x}{3r/2}]{\frac{1}{\dg{x,z}^{n-\beta}}\frac{1}{\dg{z,y}^{n-\gamma}}}(z)\\
        \begin{aligned}
            &\leq C \intM[\Bal{x}{3/\sqa}\setminus\Bal{x}{3r/2}]{\frac{1}{\dg{x,z}^{2n-\beta-\gamma}}}\\
            &\leq C \begin{cases}
                r^{\beta+\gamma-n} & \text{when } \beta + \gamma < n\\
                1+ \abs{\log \sqa r} & \text{when } \beta + \gamma = n\\
                \a^{-\frac{\beta+\gamma-n}{2}} & \text{when } \beta + \gamma > n
            \end{cases}
        \end{aligned} 
    \end{multline*}
    since $3/\sqa < \inj$. This concludes the proof of \eqref{tmp:1inZxy} when $r\leq 2\sqa$, realizing that when $\beta + \gamma > n$, $r^{\beta + \gamma - n} \leq C\a^{-\frac{\beta + \gamma -n}{2}}$ in \eqref{tmp:est1} and \eqref{tmp:est2}.
    \par We now claim that 
    \begin{equation}\label{tmp:2inZxy}
        \intM[M\setminus \Bal{x}{3/\sqa}]{\abs{X(x,z)}\abs{Y(z,y)}}(z) \leq C\begin{cases}
            r^{\beta + \gamma -n} & \text{when } \beta + \gamma < n\\
            \a^{-\frac{\beta+\gamma -n}{2}} & \text{when }\beta + \gamma \geq n
        \end{cases}.
    \end{equation}
    To prove \eqref{tmp:2inZxy}, note that, by assumption on $X,Y$, the integral in \eqref{tmp:2inZxy} has non-zero contribution only on the support of $X$ and $Y$, i.e. on $\Bal{x}{\tau} \cap \Bal{y}{\sigma}$. Moreover, when $z\in M\setminus \Bal{x}{3/\sqa}$, we have $\dg{x,z} \geq 1/\sqa$, $\dg{z,y} \geq 1/\sqa$ and $\frac{1}{3}\dg{x,z} \leq \dg{z,y} \leq \frac{5}{3} \dg{x,z}$, so that
    \begin{multline*}
        \intM[M\setminus \Bal{x}{3/\sqa}]{\abs{X(x,z)}\abs{Y(z,y)}}(z) \\
        \begin{aligned}
            &\leq C \intM[\bpr{\Bal{x}{\tau} \cap \Bal{y}{\sigma}}\setminus\Bal{x}{3/\sqa}]{\a^{p+q} \dg{x,z}^{\rho+\nu} e^{-\frac{4}{3}\sqa \dg{x,z}}}(z)\\
            &\leq C \a^{p+q} \int_{3/\sqa}^{\tau}t^{\rho + \nu + n -1} e^{-\frac{4}{3}\sqa t} dt \leq C \a^{-\frac{\beta+\gamma-n}{2}},
        \end{aligned}
    \end{multline*}
    where the last inequality follows from \eqref{eq:tmpcondpro}. This concludes the proof of \eqref{tmp:2inZxy} for $r \leq 2/\sqa$, realizing that when $\beta + \gamma < n$, $\a^{-\frac{\beta + \gamma -n}{2}} \leq C r^{\beta + \gamma - n}$. 
    \par Combining \eqref{tmp:1inZxy} and \eqref{tmp:2inZxy} with \eqref{tmp:1stZxy}, we have proven that, for $\dg{x,y} \leq 2/\sqa$,
    \begin{equation}\label{tmp:Zxy}
        \abs{Z(x,y)} \leq \begin{cases}
            C \dg{x,y}^{-(n-\beta-\gamma)} & \beta+ \gamma < n\\
            C (1 + \abs{\log(\sqa \dg{x,y})}) & \beta + \gamma = n\\
            C \a^{-\frac{\beta+\gamma-n}{2}}& \beta+ \gamma > n
        \end{cases}.
    \end{equation}

    \par For the second part of the proof, assume now that $\dg{x,y} \geq 2/\sqa$. Write again $r := \dg{x,y}$, we split the domain $M$ in the integral that defines $Z(x,y)$ in several parts: $\Bal{x}{1/\sqa}$, $\Bal{y}{1/\sqa}$, $\Bal{x}{3r/2} \setminus(\Bal{x}{1/\sqa} \cup \Bal{y}{1/\sqa})$ and $M \setminus \Bal{x}{3r/2}$. As before, the integral has non-zero contributions only in $\Omega_0 := \Bal{x}{\tau} \cap \Bal{y}{\sigma}$, and $\overline{\Omega}_0 \subset \Bal{x}{\inj},\, \overline{\Omega}_0 \subset \Bal{y}{\inj}$.
    It is then clear that $Z(x,\cdot)$ is supported in $\Bal{x}{\tau + \sigma}$. Without loss of generality, we can therefore assume that $\Omega_0 \neq \emptyset$, i.e. we can restrict to the case where $\dg{x,y} \leq \sigma + \tau$.
    \par For $z \in \Bal{x}{1/\sqa}$, we have $\dg{y,z} \geq r - 1/\sqa $ and $\frac{1}{2}\dg{x,y} \leq \dg{y,z} \leq \frac{3}{2} \dg{x,y}$, so that
    \begin{align*}
        \abs{\intM[\Bal{x}{1/\sqa}]{X(x,z)Y(z,y)}(z)} &\leq C \intM[\Bal{x}{1/\sqa}]{\a^q \frac{\dg{y,z}^\nu}{\dg{x,z}^{n-\beta}} \expab{z,y}}\\
            &\leq C \a^q r^\nu e^{-\sqa r} \intM[\Bal{x}{1/\sqa}]{\frac{1}{\dg{x,z}^{n-\beta}}}(z)\\
            &\leq C \a^{q-\frac{\beta}{2}} r^{\nu} e^{-\sqa r}\\
            &\leq C \a^{n -\frac{\beta+\gamma}{2} + \frac{\rho + \nu}{2}} r^{\rho + \nu +n} e^{-\sqa r}
    \end{align*}
    since $1/\sqa < \inj$, where the last inequality follows from \eqref{eq:tmpcondpro} and $n + \rho \geq 0$. The same arguments on $\Bal{y}{1/\sqa}$ similarly show that
    \[  \abs{\intM[\Bal{y}{1/\sqa}]{X(x,z)Y(z,y)}(z)} \leq C \a^{n -\frac{\beta+\gamma}{2} + \frac{\rho + \nu}{2}} r^{\rho + \nu +n} e^{-\sqa r}.
        \]
    Now for $ z \in \Bal{x}{3r/2}\setminus \bpr{\Bal{x}{1/\sqa}\cup \Bal{y}{1/\sqa}}$, we have $\dg{x,z} + \dg{z,y} \geq \dg{x,y}$, so that 
    \begin{multline}\label{eq:tmpbiggestterm}
        \abs{\intM[\Bal{x}{3r/2} \setminus\bpr{\Bal{x}{1/\sqa}\cup \Bal{y}{1/\sqa}}]{X(x,z)Y(z,y)}(z)}\\
            \leq C \a^{p+q} e^{-\sqa r} \intM[\Omega_0 \cap \Bal{x}{3r/2} \setminus\bpr{\Bal{x}{1/\sqa}\cup \Bal{y}{1/\sqa}}]{\dg{x,z}^\rho \dg{z,y}^\nu}(z).
    \end{multline}
    We claim that 
    \begin{equation}\label{tmp:drodnu}
        \intM[\Omega_0 \cap \Bal{x}{3r/2} \setminus\bpr{\Bal{x}{1/\sqa}\cup \Bal{y}{1/\sqa}}]{\dg{x,z}^\rho \dg{z,y}^\nu}(z) \leq C r^{\rho + \nu + n}
    \end{equation}
    for $r \geq 2/\sqa$.
    To see this, we decompose the domain of integration in two parts: $\Bal{x}{r/2} \setminus \Bal{x}{1/\sqa}$ and $\Omega_0 \cap \Bal{x}{3r/2} \setminus \bpr{\Bal{x}{r/2}\cup \Bal{y}{1/\sqa}}$. When $z\in \Bal{x}{r/2}$, we have $r/2 \leq \dg{z,y} \leq 3r/2$, so that
    \[  \intM[\Bal{x}{r/2} \setminus \Bal{x}{1/\sqa}]{\dg{x,z}^\rho \dg{z,y}^\nu}(z) \leq C r^{\rho + \nu + n},
        \]
    since $r/2 < \inj$ and $\rho + n >0$. For analogous reasons, we have 
    \[  \intM[\Omega_0 \cap \Bal{x}{3r/2} \setminus \bpr{\Bal{x}{r/2}\cup\Bal{y}{1/\sqa}}]{\dg{x,z}^\rho \dg{z,y}^\nu}(z) \leq C r^{\rho + \nu + n}.
        \]
    This concludes the proof of \eqref{tmp:drodnu}. Putting \eqref{eq:tmpbiggestterm} with \eqref{tmp:drodnu}, we have proven that, for $r \geq 2/\sqa$, 
    \begin{multline*}
        \abs{\intM[\Bal{x}{3r/2} \setminus\bpr{\Bal{x}{1/\sqa}\cup \Bal{y}{1/\sqa}}]{X(x,z)Y(z,y)}(z)}\\
        \leq C \a^{n -\frac{\beta+\gamma}{2} + \frac{\rho + \nu}{2}} r^{\rho+\nu+n}e^{-\sqa r}
    \end{multline*}
    using \eqref{eq:tmpcondpro}.
        
    Finally, when $z \in M \setminus \Bal{x}{3r/2}$, we have as before $\frac{1}{3} \dg{x,z} \leq \dg{y,z} \leq \frac{5}{3} \dg{x,z}$, so that since $X(x, \cdot)$ is supported in $\Bal{x}{\tau}$,
    \begin{multline*}
        \abs{\intM[M\setminus\Bal{x}{3r/2}]{X(x,z)Y(z,y)}(z)} \\
        \begin{aligned}
            &\leq C\a^{p+q} \intM[\Bal{x}{\tau} \setminus \Bal{x}{3r/2}]{\dg{x,z}^{\rho + \nu} e^{-\sqa \frac{4}{3}\dg{x,z}}}(z)\\
            &\leq C \a^{p+q}\, \Gamma(\rho+\nu +n, 2\sqa r)
        \end{aligned}
    \end{multline*}
    where $\Gamma(\delta, t) := \int_t^{+\infty} s^{\delta-1}e^{-s}ds$ is the incomplete Gamma function. Note that this last integral is non-zero only in the case where $r < 2\tau/3$. It is easily seen that $\Gamma(\delta, t) \sim t^\delta e^{-t}$ as $t\to \infty$, so that in the end we have shown that, for $r \geq 2/\sqa$,
    \[
        \abs{\intM[M\setminus\Bal{x}{3r/2}]{X(x,z)Y(z,y)}(z)} \leq \a^{n -\frac{\beta+\gamma}{2} + \frac{\rho + \nu}{2}} r^{\rho + \nu + n} e^{-\sqa r}
    \]
    using \eqref{eq:tmpcondpro}. This concludes the second part of the proof for $\dg{x,y} \geq 2/\sqa$.
    \par Finally, when $1 \leq \sqa \dg{x,y} \leq 2$, as before, the two regimes coincide, up to a constant. The first part of the proof shows that, with \eqref{tmp:Zxy}, 
    \[
        \abs{\intM{X(x,z)Y(z,y)}} \leq C \a^{\frac{n-\beta-\gamma}{2}}.
        \]
    Moreover, there is a constant $C>0$ independent of $\a$, $x,y$, such that
    \[  \tfrac{1}{C} \a^{\frac{n-\beta - \gamma}{2}} \leq \a^{n - \frac{\beta + \gamma}{2} + \frac{\rho + \nu}{2}} \dg{x,y}^{\rho + \nu + n} e^{-\sqa \dg{x,y}} \leq C \a^{\frac{n-\beta-\gamma}{2}}.
        \]
    We can then conclude, when $1 \leq \sqa \dg{x,y} \leq 2$ we have
    \[  \abs{Z(x,y)} \leq C \a^{n - \frac{\beta + \gamma}{2} + \frac{\rho + \nu}{2}} \dg{x,y}^{\rho + \nu + n} e^{-\sqa \dg{x,y}}.
        \]
    \par Regarding the continuity, fix $x,y \in M$ with $x\neq y$, and take any sequence $((x_m,y_m))_{m}$ such that $x_m \to x$, $y_m \to y$. Let $\delta_m := \dg{x_m,x},\, \Tilde{\delta}_m := \dg{y_m,y}$, we assume without loss of generality that $\dg{x_m,x} \leq \tfrac{1}{3}\dg{x, y}$ and $\dg{y_m,y} \leq \tfrac{1}{3}\dg{x,y}$ for all $m\in \N$. Then we compute
    \begin{multline}\label{eq:contZ}
        Z(x_m,y_m) = \intM[\Bal{x_m}{\delta_m/2}]{X(x_m,z)Y(z,y_m)}(z)\\
             +\intM[\Bal{y_m}{\Tilde{\delta}_m/2}]{X(x_m,z)Y(z,y_m)}(z)\\
                 + \intM[M\setminus (\Bal{x_m}{\delta_m/2}\cup \Bal{y_m}{\Tilde{\delta}/2})]{X(x_m,z)Y(z,y_m)}(z).
        \end{multline} 
    On the one hand, when $z\in \Bal{x_m}{\delta_m/2}$, we have $\dg{z,y_m} > r/2$, writing once again $r = \dg{x,y}>0$, so that 
    \begin{align*}  
        \abs{\intM[\Bal{x_m}{\delta_m/2}]{X(x_m,z)Y(z,y_m)}(z)} &\leq Cr^{\gamma-n}\intM[\Bal{x_m}{\delta_m/2}]{\dg{x_m,z}^{\beta-n}}(z)\\
            &\leq C \delta_m^\beta r^{\gamma-n}.
        \end{align*}
    Similarly, when $z \in \Bal{y_m}{\Tilde{\delta}_m/2}$, we have $\dg{x_m,z} > r/2$, so that 
    \begin{align*}  
        \abs{\intM[\Bal{y_m}{\Tilde{\delta}_m/2}]{X(x_m,z)Y(z,y_m)}(z)} &\leq Cr^{\beta-n}\intM[\Bal{y_m}{\Tilde{\delta}_m/2}]{\dg{z,y_m}^{\gamma-n}}(z)\\
            &\leq C \Tilde{\delta}_m^\gamma r^{\beta-n}.
        \end{align*}
    On the other hand, when $z\not\in (\Bal{x_m}{\delta_m/2} \cup \Bal{x_m}{\Tilde{\delta}_m/2})$, we have $\dg{x_m,z}\geq \tfrac{1}{3}\dg{x,z}$ and $\dg{z,y_m} \geq \tfrac{1}{3}\dg{z,y}$, so that 
    \begin{align*}  
        \abs{X(x_m,z)} &\leq C \dg{x,z}^{\beta-n},\\
        \abs{Y(z,y_m)} &\leq C \dg{z,y}^{\gamma-n} 
        \end{align*}
    By dominated convergence, we obtain 
    \begin{multline*}  \intM[M\setminus(\Bal{x_m}{\delta_m/2}\cup \Bal{y_m}{\Tilde{\delta}_m/2})]{X(x_m,z)Y(z,y_m)}(z)\\ \xto{m\to \infty} \intM{X(x,z)Y(z,y)}(z).
        \end{multline*}
    Coming back to \eqref{eq:contZ}, we have shown that for all $x\neq y$ in $M$, 
    \[  \lim_{m\to \infty} Z(x_m,y_m) = \intM{X(x,z)Y(z,y)}(z) + 0 + 0= Z(x,y),
        \]
    and $Z$ is continuous at $(x,y) \in M\times M\setminus \Diag$.
    Additionally, when $\beta+\gamma > n$, for all $w \in M$, and for all $0<\delta< 1/\sqa$, there exists $C>0$ such that for all $x,y \in M$,
    \[  \abs{\intM[\Bal{w}{\delta}]{X(x,z)Y(z,y)}(z)} \leq C \delta^{\beta+\gamma-n},
        \] 
    this holds true even when $x=y$. We conclude that $Z \in C^0(M\times M)$.
\end{proof}
\begin{remark}
    The assumption \eqref{eq:tmpcondpro} is a compatibility condition. If $2p - \rho = n-\beta$, we know that the two regimes of $X$ are equivalent when $\dg{x,y} \sim 1/\sqa$.
\end{remark}
\begin{remark}\label{rem:convGiraud}
    Observe that the convolution $Z$ decreases less quickly than either $X$ and $Y$ at large distances. This is due to the term $\dg{x,y}^{\rho+\nu+n}$ that comes from \eqref{eq:tmpbiggestterm}. This term becomes larger than $\a^{-\frac{\rho+\nu+n}{2}}$ when $\dg{x,y} \gg 1/\sqa $, which happens at finite distance as $\a \to \infty$.
\end{remark}

We conclude this Appendix with variant of Lemma \ref{prop:rexpGir}, which is used in the proof of Theorem \ref{prop:bvorGga}, where $X$ and $Y$ are allowed to have slightly different exponential decay.

\begin{lemma}\label{prop:girPsi}
    Let $\a_0\geq 1$ and let $X, Y \in C^0(M\times M\setminus \Diag)$ be such that $X(x,\cdot)$ is supported in $\Bal{x}{\tau}$, $\tau < \inj/2$. Suppose that there are $p\geq 0$ and $\rho > -n$ with $2p-\rho \leq 0$, and $0<\epsilon<1$ such that for all $x\neq y$,
    \begin{align*}
        \abs{X(x,y)} &\leq \begin{cases}
            1 & \sqa \dg{x,y}\leq 1\\
            \a^p \dg{x,y}^\rho \expab{x,y} & \sqa \dg{x,y} \geq 1
        \end{cases}\\
        \abs{Y(x,y)} & \leq \begin{cases}
            1 & \sqa \dg{x,y} \leq 1\\
            e^{-(1-\epsilon)\sqa \dg{x,y}} & \sqa \dg{x,y} \geq 1\\
            e^{-(1-\epsilon)\sqa \,\inj/2} & \dg{x,y} \geq \inj/2.
        \end{cases}
    \end{align*}
    Let, for all $x, y \in M$, $Z(x,y) := \intM{X(x,z)Y(z,y)}(z)$. There exists $\a_0\geq 1$ and $C>0$ such that for all $x, y \in M$ and $\a \geq \a_0$,
    \[  \abs{Z(x,y)} \leq C \a^{-\frac{n}{2}} \begin{cases}
        1 & \sqa \dg{x,y} \leq 1\\
        e^{-(1-\epsilon)\sqa \dg{x,y}} & \sqa \dg{x,y} \geq 1\\
        e^{-(1-\epsilon)\sqa \,\inj/2} & \dg{x,y} \geq \inj/2
    \end{cases}.
        \]
\end{lemma}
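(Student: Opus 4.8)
The plan is to mimic the proof of Lemma \ref{prop:rexpGir}, splitting into the near regime $\sqa\dg{x,y}\leq 1$ and the far regime $\sqa\dg{x,y}\geq 1$, but being careful to track where the mismatch in exponential rates (rate $\sqa$ for $X$ versus rate $(1-\epsilon)\sqa$ for $Y$) enters. First, observe that since $X(x,\cdot)$ is supported in $\Bal{x}{\tau}$ with $\tau<\inj/2$, the function $Z(x,\cdot)$ is supported in $\Bal{x}{\tau+\inj/2}$ and in fact all integrals below receive contributions only from $\Omega_0:=\Bal{x}{\tau}$, which has bounded diameter; in particular $\dg{x,z}\leq\tau$ throughout. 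Up to enlarging $\a_0$ we may assume $5/\sqa<\tau$.

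For the near regime $\dg{x,y}\leq 2/\sqa$, I would split $\Bal{x}{\tau}$ as $\Bal{x}{3/\sqa}$ together with its complement inside $\Bal{x}{\tau}$. On $\Bal{x}{3/\sqa}$, using $2p-\rho\leq 0$ to convert the far bound on $X$ into $\abs{X(x,z)}\leq C\dg{x,z}^{-n}\cdot(\sqa\dg{x,z})^{\rho+2p}\leq C$ there — actually more simply $\abs{X}\leq C$ and $\abs{Y}\leq C$ on a ball of radius $\bigO(1/\sqa)$ — one gets a contribution of size $C(1/\sqa)^n=C\a^{-n/2}$. On the complement $\Bal{x}{\tau}\setminus\Bal{x}{3/\sqa}$ we have $\dg{x,z}\geq 3/\sqa$, hence also $\dg{z,y}\geq\dg{x,z}-\dg{x,y}\geq\dg{x,z}/3$, so $\abs{X(x,z)}\abs{Y(z,y)}\leq C\a^p\dg{x,z}^{\rho}e^{-\sqa\dg{x,z}}e^{-(1-\epsilon)\sqa\dg{z,y}}\leq C\a^p\dg{x,z}^{\rho}e^{-(1+\tfrac{1-\epsilon}{3}-\text{small})\sqa\dg{x,z}}$; integrating in polar coordinates from $3/\sqa$ to $\tau$ and using $2p-\rho\leq 0$ (so that $\a^p t^{\rho}$ integrated against a decaying exponential scales like $\a^{-n/2}$) gives again $C\a^{-n/2}$. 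This yields $\abs{Z(x,y)}\leq C\a^{-n/2}$ when $\sqa\dg{x,y}\leq 1$.

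For the far regime $\dg{x,y}=:r\geq 2/\sqa$, split $\Bal{x}{\tau}$ into $\Bal{x}{1/\sqa}$, $\Bal{y}{1/\sqa}$, $\Bal{x}{3r/2}\setminus(\Bal{x}{1/\sqa}\cup\Bal{y}{1/\sqa})$, and $\Bal{x}{\tau}\setminus\Bal{x}{3r/2}$. On $\Bal{x}{1/\sqa}$ one has $\abs{X}\leq C$ and $\dg{z,y}\geq r-1/\sqa\geq r/2$, so the integral is $\leq C\a^{-n/2}e^{-(1-\epsilon)\sqa r/2}$; but since $r\leq\tau+\inj/2$ is bounded, $e^{-(1-\epsilon)\sqa r/2}\leq C e^{-(1-\epsilon)\sqa r}e^{(1-\epsilon)\sqa r/2}$ is \emph{not} directly small enough — so instead I estimate $\dg{z,y}\geq r-1/\sqa$ and write $e^{-(1-\epsilon)\sqa(r-1/\sqa)}=e^{1-\epsilon}e^{-(1-\epsilon)\sqa r}$, giving the clean bound $C\a^{-n/2}e^{-(1-\epsilon)\sqa r}$. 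The ball $\Bal{y}{1/\sqa}$ is symmetric: there $\abs{Y(z,y)}\leq C$ (or $\leq Ce^{-(1-\epsilon)\sqa\inj/2}$ if applicable) and $\dg{x,z}\geq r-1/\sqa$, so $\abs{X(x,z)}\leq C\a^p(r-1/\sqa)^{\rho}e^{-\sqa(r-1/\sqa)}$; integrating the constant over a ball of radius $1/\sqa$ and using $2p-\rho\leq 0$ together with $e^{-\sqa r}\leq e^{-(1-\epsilon)\sqa r}$ gives $\leq C\a^{-n/2}e^{-(1-\epsilon)\sqa r}$. On $\Bal{x}{3r/2}\setminus(\Bal{x}{1/\sqa}\cup\Bal{y}{1/\sqa})$, bound $\abs{X}\abs{Y}\leq C\a^p\dg{x,z}^{\rho}e^{-\sqa\dg{x,z}}e^{-(1-\epsilon)\sqa\dg{z,y}}$; since $\dg{x,z}+\dg{z,y}\geq r$ and $\sqa\geq(1-\epsilon)\sqa$, we get $e^{-\sqa\dg{x,z}-(1-\epsilon)\sqa\dg{z,y}}\leq e^{-(1-\epsilon)\sqa r}$, and the remaining $\a^p\dg{x,z}^{\rho}$ integrated over a domain of measure $\bigO(r^n)$ (with $\dg{x,z}\leq 3r/2$ bounded, $\geq 1/\sqa$) gives a factor $\bigO(\a^{p}\cdot\a^{-(\rho+n)/2}\cdot(\sqa r)^{\text{bounded}})=\bigO(\a^{-n/2})$ using $2p\leq\rho$, hence $\leq C\a^{-n/2}e^{-(1-\epsilon)\sqa r}$. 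On $\Bal{x}{\tau}\setminus\Bal{x}{3r/2}$ we have $\dg{z,y}\geq\dg{x,z}/3$ and $\dg{x,z}\geq 3r/2$, so the integrand is $\leq C\a^p\dg{x,z}^{\rho}e^{-(1+\tfrac{1-\epsilon}{3})\sqa\dg{x,z}}$; this is an incomplete-Gamma-type integral, bounded by $C\a^p(\sqa r)^{\rho+n}e^{-(1+\tfrac{1-\epsilon}{3})\sqa\cdot 3r/2}\sqa^{-n}\leq C\a^{-n/2}e^{-(1-\epsilon)\sqa r}$ again using $2p-\rho\leq 0$ and that $(1+\tfrac{1-\epsilon}{3})\tfrac{3}{2}=\tfrac{3}{2}+\tfrac{1-\epsilon}{2}\geq 1-\epsilon$. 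Summing the four pieces and recalling $r\leq\tau+\inj/2$ for the $\dg{x,y}\geq\inj/2$ case (where $e^{-(1-\epsilon)\sqa r}$ and $e^{-(1-\epsilon)\sqa\inj/2}$ are comparable since $r$ is bounded), we obtain the claimed bound. Continuity of $Z$ on $M\times M$ (in particular it is continuous across the diagonal since $X$ is bounded near the diagonal) follows exactly as in the proof of Lemma \ref{prop:rexpGir}.

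The main obstacle I anticipate is the bookkeeping in the far regime near the ball $\Bal{y}{1/\sqa}$ and on $\Bal{x}{3r/2}\setminus(\cdots)$: one must verify that the mismatched exponential rates combine to give exactly $e^{-(1-\epsilon)\sqa\dg{x,y}}$ — the key inequalities being $\sqa\dg{x,z}+(1-\epsilon)\sqa\dg{z,y}\geq(1-\epsilon)\sqa(\dg{x,z}+\dg{z,y})\geq(1-\epsilon)\sqa r$ on the overlap region, and the boundary shifts $e^{-(1-\epsilon)\sqa(r-1/\sqa)}=e^{1-\epsilon}e^{-(1-\epsilon)\sqa r}$ — and that the polynomial-in-$\a$ prefactors collapse to exactly $\a^{-n/2}$ courtesy of the compatibility condition $2p-\rho\leq 0$. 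Everything else is a routine adaptation of the previous Lemma's argument.
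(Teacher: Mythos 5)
Your overall decomposition is the same as the paper's (near regime, then far regime split around $\Bal{x}{1/\sqa}$, $\Bal{y}{1/\sqa}$, and the rest), and most of the pieces are handled correctly, including the key observation that the mismatched rates combine via $\sqa\dg{x,z} + (1-\epsilon)\sqa\dg{z,y} \geq (1-\epsilon)\sqa r$. However, there is a genuine gap in your treatment of the intermediate region $\Bal{x}{3r/2}\setminus(\Bal{x}{1/\sqa}\cup\Bal{y}{1/\sqa})$.

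In that piece you extract the constant $e^{-(1-\epsilon)\sqa r}$ out of the integral and then claim that the leftover $\a^{p}\int \dg{x,z}^{\rho}\, dv_g$, over a domain of measure $\bigO(r^n)$ with $\dg{x,z}\in[1/\sqa, 3r/2]$, is $\bigO(\a^{p-(\rho+n)/2}(\sqa r)^{\text{bounded}}) = \bigO(\a^{-n/2})$. But $(\sqa r)^{\rho+n}$ is \emph{not} bounded: $r$ is only bounded above by $\tau + \inj/2$ (a fixed constant), so $\sqa r \to \infty$ with $\a$. Concretely, $\a^{p}\int_{1/\sqa}^{3r/2} t^{\rho+n-1}\,dt \sim \a^{p}\, r^{\rho+n}$, which grows like $\a^{p}$ (times a fixed power of $r$) and is never $\bigO(\a^{-n/2})$ since $p\geq 0$. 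You cannot afford to throw away the full extra decay $e^{-\epsilon\sqa\dg{x,z}}$ before integrating. The paper instead factors $e^{-\sqa\dg{x,z}} = e^{-(1-\epsilon)\sqa\dg{x,z}} e^{-\epsilon\sqa\dg{x,z}}$, extracts $e^{-(1-\epsilon)\sqa r}$, and keeps $e^{-\epsilon\sqa\dg{x,z}}$ inside; a change of variable $t = \sqa\dg{x,z}$ then yields
\[
\a^{p}\int_{1/\sqa}^{\tau}\dg{x,z}^{\rho+n-1}e^{-\epsilon\sqa\dg{x,z}}\,d\dg{x,z} \leq \a^{p-\frac{\rho+n}{2}}\int_{1}^{\infty} t^{\rho+n-1}e^{-\epsilon t}\,dt \leq C_\epsilon\, \a^{-\frac{n}{2}},
\]
using $2p - \rho \leq 0$. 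Keeping that exponential is the whole point, and it is precisely what the hypothesis $2p-\rho\leq 0$ is designed to exploit.

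A secondary issue: the regime $\dg{x,y}\geq\inj/2$ is not established by a remark that $e^{-(1-\epsilon)\sqa r}$ and $e^{-(1-\epsilon)\sqa\inj/2}$ ``are comparable since $r$ is bounded'' — they are not comparable, only $e^{-(1-\epsilon)\sqa r}\leq e^{-(1-\epsilon)\sqa\inj/2}$. Moreover, in that regime your hypothesis on $Y$ gives only the weaker bound $\abs{Y(z,y)}\leq e^{-(1-\epsilon)\sqa\inj/2}$ when $\dg{z,y}\geq\inj/2$, so the estimate has to be rerun with this flat bound (as the paper does, splitting into $\Bal{x}{1/\sqa}$ and its complement and using $\dg{x,z}+\dg{z,y}\geq\inj/2$). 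It is a short adaptation, but it needs to be done rather than asserted.
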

\begin{remark}
    Classically, the exponential decay of $Z$ exactly matches that of the least decreasing term $Y$.
\end{remark}

The proof follows the same steps as the proof of Lemma \ref{prop:rexpGir}, and we just explain the modifications. 
\begin{proof}
    \par Assume first that $\dg{x,y} \leq 2/\sqa $. Arguing as in the proof of Lemma \ref{prop:rexpGir} in the case $\beta = \gamma = n$, we obtain
    \[  \abs{Z(x,y)} \leq C \a^{-\frac{n}{2}}.
        \]
    Assume now that $2/\sqa \leq \dg{x,y} \leq \inj/2$. We adapt the proof of Lemma \ref{prop:rexpGir}. Write $r=\dg{x,y}$ and split the domain $M$ in the integral that defines $Z(x,y)$ between $\Bal{x}{1/\sqa}$, $\Bal{y}{1/\sqa}$, and $M\setminus(\Bal{x}{1/\sqa}\cup\Bal{y}{1/\sqa})$.
    \par When $z\in \Bal{x}{1/\sqa}$, we have $\dg{y,z} \geq r -1/\sqa$, so that
    \begin{align*}
        \abs{\intM[\Bal{x}{1/\sqa}]{X(x,z)Y(z,y)}(z)} &\leq C e^{-(1-\epsilon)\sqa r} \intM[\Bal{x}{1/\sqa}]{}(z)\\
            &\leq C \a^{-\frac{n}{2}}e^{-(1-\epsilon)\sqa r}.
    \end{align*}
    Now when $z \in \Bal{y}{1/\sqa}$, we have 
    \[  \abs{X(x,z)} \leq C e^{-(1-\epsilon)\sqa r},
        \]
    since $\dg{x,z} \geq r -1/\sqa$ and $p-\rho/2 < 0$. Therefore, we obtain
    \begin{align*}
        \abs{\intM[\Bal{y}{1/\sqa}]{X(x,z)Y(z,y)}(z)} &\leq C e^{-(1-\epsilon)\sqa r} \intM[\Bal{y}{1/\sqa}]{}(z)\\
            &\leq C \a^{-\frac{n}{2}}e^{-(1-\epsilon)\sqa r}.
    \end{align*}
    Finally, when $z \in M\setminus(\Bal{x}{1/\sqa}\cup\Bal{y}{1/\sqa})$, we have $\dg{x,z} + \dg{z,y} \geq r$ and $\dg{x,z} + \inj/2 \geq \inj/2 \geq r$, so that since $X(x,\cdot)$ is supported in $\Bal{x}{\tau}$,
    \begin{multline}\label{tmp:Xsupp}  
        \abs{\intM[M \setminus (\Bal{x}{1/\sqa}\cup\Bal{y}{1/\sqa})]{X(x,z)Y(z,y)}(z)}\\
        \begin{aligned}
            &\leq C \a^p e^{-(1-\epsilon)\sqa r} \intM[\Bal{x}{\tau} \setminus \Bal{x}{1/\sqa}]{\dg{x,z}^{\rho}e^{-\epsilon \sqa \dg{x,z}}}(z)\\
            &\leq C\a^{p-\frac{\rho}{2}-\frac{n}{2}} e^{-(1-\epsilon)\sqa r} \int_{\epsilon}^\infty t^{\rho + n - 1} e^{-t} dt\\
            &\leq C\a^{-\frac{n}{2}}e^{-(1-\epsilon)\sqa r}
        \end{aligned}
    \end{multline}
    since $p-\rho/2 < 0$. This concludes the proof for the case $2/\sqa \leq \dg{x,y} \leq \inj/2$. 
    \par For the last case, assume that $\dg{x,y} \geq \inj/2$, and split the domain $M$ in the integral that defines $Z(x,y)$ in two parts: $\Bal{x}{1/\sqa}$ and $M\setminus\Bal{x}{1/\sqa}$. When $z\in \Bal{x}{1/\sqa}$, we have $\dg{z,y} \geq \dg{x,y} - 1/\sqa \geq \inj/2 - 1/\sqa$, so that 
    \begin{align*}  
        \abs{\intM[\Bal{x}{1/\sqa}]{X(x,z)Y(z,y)}(z)} &\leq C e^{-(1-\epsilon)\sqa \inj/2} \intM[\Bal{x}{1/\sqa}]{}(z)\\
            &\leq C \a^{-\frac{n}{2}} e^{-(1-\epsilon)\sqa\inj/2}.
    \end{align*}
    On the other hand, when $z\in M\setminus \Bal{x}{1/\sqa}$, we have 
    \[  \dg{x,z} + \dg{z,y} \geq \dg{x,y} \geq \inj/2, 
    \] 
    so that, as in \eqref{tmp:Xsupp},
    \begin{multline*}
        \abs{\intM[M \setminus \Bal{x}{1/\sqa}]{X(x,z)Y(z,y)}(z)}\\
        \begin{aligned}    
            &\leq C \a^p e^{-(1-\epsilon)\sqa \inj/2} \intM[\Bal{x}{\tau}\setminus \Bal{x}{1/\sqrt{\a}}]{\dg{x,z}^\rho e^{-\epsilon\sqa\dg{x,z}}}(z)\\
            &\leq C \a^{-\frac{n}{2}} e^{-(1-\epsilon)\sqa\inj/2}
        \end{aligned}
    \end{multline*}
    using $p-\rho/2 < 0$, which concludes the proof.
\end{proof}

\bibliographystyle{amsplain}
\bibliography{reference}

\end{document}